\newcommand{\blind}{0}
\DeclareMathAlphabet{\mathbbold}{U}{bbold}{m}{n}
\theoremstyle{plain}
\declaretheorem[name=Theorem, numberwithin=section]{theorem}
\declaretheorem[name=Proposition, numberwithin=section]{prop}
\declaretheorem[name=Definition, numberwithin=section]{definition}
\titleformat{\subsection}{\normalfont\large\bfseries\itshape}{\thesubsection}{1em}{}
\newcommand{\Real}{\mathbb{R}}
\newcommand{\Proba}{\mathbb{P}}
\newcommand{\Xdom}{\mathcal{X}}
\newcommand{\Udom}{\mathcal{U}}
\newcommand{\Ydom}{\mathcal{Y}}
\newcommand{\Hdom}{\mathcal{H}}
\newcommand{\Bdom}{\mathcal{B}}
\newcommand{\Zdom}{\mathcal{Z}}
\newcommand{\Adom}{\mathcal{A}}
\newcommand{\Fdom}{\mathcal{F}}
\newcommand{\Esp}{\mathbb{E}}
\definecolor{forestgreen}{rgb}{0.1333, 0.5451, 0.1333} 
\newcommand{\UE}[1]{\renewcommand{\UE}{#1}}
\newcommand{\sujet}[1]{\renewcommand{\sujet}{#1}}
\newcommand{\titre}[1]{\renewcommand{\titre}{#1}}
\newcommand{\enseignant}[1]{\renewcommand{\enseignant}{#1}}
\newcommand{\eleves}[1]{\renewcommand{\eleves}{#1}}
\newskip\@bigflushglue \@bigflushglue = -100pt plus 1fil
\def\bigcentering{\let\\\@centercr\rightskip\@bigflushglue%
\leftskip\@bigflushglue
\parindent\z@\parfillskip\z@skip}
\title{Kernel-based Sensitivity Analysis for (Excursion) Sets} 
\newrobustcmd*{\parentexttrack}[1]{%
  \begingroup
  \blx@blxinit
  \blx@setsfcodes
  \blx@bibopenparen#1\blx@bibcloseparen
  \endgroup}
\title{Kernel-based Sensitivity Analysis for (excursion) sets}
\author[1,2]{N. Fellmann}
\author[1]{C. Blanchet-Scalliet}
\author[1]{C. Helbert}
\author[2]{A. Spagnol}
\author[2]{D. Sinoquet}
\affil[1]{École Centrale de Lyon, CNRS UMR 5208, Institut Camille Jordan, 36 Avenue Guy de Collongue,
69134 Écully, France}
\affil[2]{IFP Energies Nouvelles}
\affil[ ]{\textit{\{noe.fellmann,christophette.blanchet,celine.helbert\}@ec-lyon.fr}}
\affil[ ]{\textit{\{noe.fellmann,delphine.sinoquet,adrien.spagnol\}@ifpen.fr}}
\begin{document}
\def\spacingset#1{\renewcommand{\baselinestretch}%
{#1}\small\normalsize} \spacingset{1}
\renewcommand{\labelitemi}{$\bullet$} 
\renewcommand{\labelitemii}{$\star$} 
\selectlanguage{english}
\if0\blind{
\maketitle
}\fi

\if1\blind
{
\bigskip
  \bigskip
  \bigskip
  \begin{center}
    {\LARGE\bf Kernel-based Sensitivity Analysis for (excursion) sets}
\end{center}
  \medskip
} \fi

\bigskip

\begin{abstract}

 In this paper, we aim to perform sensitivity analysis of set-valued models and, in particular, to quantify the impact of uncertain inputs on feasible sets, which are key elements in solving a robust optimization problem under constraints. While most sensitivity analysis methods deal with scalar outputs, this paper introduces a novel approach for performing sensitivity analysis with set-valued outputs. Our innovative methodology is designed for excursion sets, but is versatile enough to be applied to set-valued simulators, including those found in viability fields, or when working with maps like pollutant concentration maps or flood zone maps.

We propose to use the Hilbert-Schmidt Independence Criterion (HSIC) with a kernel designed for set-valued outputs. After proposing a probabilistic framework for random sets, a first contribution is the proof that this kernel is \textit{characteristic}, an essential property in a kernel-based sensitivity analysis context. To measure the contribution of each input, we then propose to use HSIC-ANOVA indices. With these indices, we can identify which inputs should be neglected (\textit{screening}) and we can rank the others according to their influence (\textit{ranking}). The estimation of these indices is also adapted to the set-valued outputs. Finally, we test the proposed method on three test cases of excursion sets.
\end{abstract}
\noindent%
{\it Keywords:}  HSIC-ANOVA indices, random sets, screening, ranking
\vfill

\newpage


\section{Introduction }

In many fields, it is essential to understand the input/output relationships of models that simulate complex physical systems. This knowledge can be used to simplify and optimize the model and provide valuable insights to experts. Sensitivity analysis (SA) is one response to this challenge. It quantifies how variations in inputs translate into variations in outputs, precisely measuring the impact of each input on the output. However, in certain domains, models may have highly complex outputs that traditional SA methods, originally designed for scalar outputs, may not be suitable for. Specifically, we are interested in models with set-valued outputs, where each evaluation of the model produces a subset of a larger space. Our interest lies in the need to quantify the influence of parameters on the excursion sets of optimization problems, with the goal of simplifying constrained robust optimization problems. Set-valued output models are also prevalent in other domains, including image processing, map modeling, and viability fields. Therefore, we aim to measure the impact of each input, on a set-valued output through an adapted SA approach.

Global sensitivity analysis (GSA) is a methodology used to assess the impact of input variations on the output of a system or model across the entire parameter space. A detailed review of GSA methods can be found in the book \textcite{livre_SA} or in \textcite{ReviewSA}. These methods can be distinguished into two types: 
\begin{itemize}
    \item \textit{screening}-oriented SA techniques are devoted to identifying influential and non-influential inputs;
    \item \textit{ranking}-oriented SA aims to compute sensitivity indices or importance measures, which are scalars representing the effect of an input or group of inputs on the output. These indices are then used to rank the inputs by their influence.
\end{itemize} One well-known screening technique is the Morris method (\textcite{morris}), which is based on one-at-a-time designs, i.e., where each input varies while the others are held fixed. Regarding ranking methods, the most commonly used indices are the Sobol' indices, which are variance-based sensitivity measures (\textcite{sobol_global_2001}). These indices quantify the portion of the output variance that can be attributed to one or a group of inputs. This variance decomposition is called the ANOVA (ANalysis Of VAriance) decomposition.  However, these indices have several drawbacks.  One main issue is that their estimation cost in terms of the number of model evaluations is very high. For instance, the simulation budget needed for the 'pick and freeze' method (\textcite{Sobol1993SensitivityEF}') increases linearly with the input dimension. Rank-based estimators have recently been used to circumvent this problem (\textcite{gamboa_global_2021}), but at the cost of not being able to estimate the total-order Sobol' indices. Another major drawback is that Sobol' indices quantify the input contributions to the output variance, but not to the entire output distribution. That is why other types of indices examine how the entire output distribution is affected by the input parameters. This is done by comparing the input and output probability distributions. For instance, Borgonovo indices compare the density functions (\textcite{BORGONOVO2007771}), Cramér von Mises indices look at the cumulative distribution function (\textcite{cramervonmisesindex}), and kernel-based sensitivity indices compare the embedding of the distributions (\textcite{daveiga:hal-03108628}).  The former method relies on embedding distributions in Reproducing Kernel Hilbert Spaces (RKHS) using kernel functions, which makes it easier to compute distances. By embedding the input and output distributions in the RKHS, indices based on the Hilbert-Schmidt Independence Criterion (HSIC) (\textcite{gretton_hsic2}) can be defined. These indices quantify the dependence between an input and the output and can be used for screening. They also have a low cost because they can be estimated using a single sample. Under certain assumptions, an ANOVA-like decomposition of HSIC exists, as shown in \textcite{daveiga:hal-03108628} which makes them usable for ranking.

Several works in the literature have addressed SA adapted to complex outputs. However, to the best of our knowledge, the specific case of set-valued outputs has not yet been studied. For instance, \textcite{higdon2008computer}, \textcite{marrel2011global}, and \textcite{perrin2021functional} examined spatial outputs, while \textcite{de2017sensitivity} and \textcite{marrel2015development} explored spatiotemporal outputs. These methods generate sensitivity index maps that can be interpreted by connecting them to the underlying physical phenomena. However, there is a strong motivation, whether due to interpretation challenges or synthesis concerns, to find a single scalar index per input that captures its influence on the variability of the entire complex output. This is explored and presented in \textcite{Sobol_multivariate}, which defines aggregated Sobol' indices for vectorial and functional outputs. In \textcite{Sa_gen_met_space} the authors also propose an universal index that can be used in any metric space. Kernel-based indices, in particular the HSIC, can also be used with complex outputs, as proposed in \textcite{veiga2015} and in \textcite{amri_morel} with functional outputs. Adaptation of HSIC to complex output is in fact straightforward, kernel methods being known to be flexible with respect to the type of data (see for example \textcite{shawe-taylor_basic_2004}, where kernel methods are used with several types of data such as vectors, texts, trees...). The flexibility of the latter lies in the fact that they mostly rely only on defining a kernel in the relevant space (where random elements can be defined), making them particularly permissive in accommodating different input and output types. For this flexibility, low cost, ability to quantify changes across the entire output distribution, and their utility for both screening and ranking purposes,  we propose to use HSIC-ANOVA indices to conduct a SA of set-valued models. 

To perform kernel-based SA on a set-valued model, there are two main requirements. First, a framework of random sets is required. Indeed, a space of sets is not an easy space to work in, and describing randomness in such a space can be challenging. For example, \textcite{molchanov_random_2017} and \textcite{nguyen_introduction_2006} propose a complete theory of random sets. The second key to performing kernel-based SA on set-valued outputs is to have a kernel defined on sets. Given these two conditions, we can apply the kernel-based SA methodology to define sensitivity indices for set-valued outputs. This is the focus of this paper.

To this end, we first recall in Section \ref{sec:generic} the general methodology for defining kernel-based sensitivity indices, and in particular the Hilbert-Schmidt Independence Criterion (HSIC), in a generic measurable output space. We then present our contributions in Section \ref{sec:kset}. First, we define the probabilistic framework for random sets. Then, we introduce a new kernel defined on a space of sets and study it in detail. In particular, we show that our kernel is \textit{characteristic}, a crucial property for screening purposes. Finally, HSIC-ANOVA indices for set-valued outputs are derived, but their estimation is complicated by the presence of sets. Therefore, a nested Monte Carlo estimator is proposed to estimate the indices and its statistical properties are studied. Finally, numerical results obtained from two test cases related to excursion sets and from an industrial test case for electrical machine design are given in Section \ref{numerical}. Proofs of the results of section \ref{sec:kset} and some additional numerical results are given in the appendix.
 

\section{Kernel-based SA in a generic space $\Zdom$}
\label{sec:generic}
In this section, we explain how HSIC-ANOVA indices can be derived from kernel theory in a generic space. For a complete kernel theory, the reader can refer to the book \textit{Support Vector Machine} of \textcite{kernels_steinwart}.
\subsection{Kernel Embedding, MMD and HSIC}
\label{subsec:Kernel embedding and MMD}
Let $(\Zdom, \Adom)$ be a measurable space on which $k_{\Zdom}$ is a kernel, i.e., a symmetric and positive definite function $k_{\Zdom} : \Zdom \times \Zdom \rightarrow \Real $. As stated in Theorem 4.21 of \textcite{kernels_steinwart}, there exists a unique Reproducing Kernel Hilbert Space (RKHS) $\Hdom_{k_{\Zdom}}$ of reproducing kernel $k_{\Zdom}$, i.e a kernel such that
\begin{itemize}
    \item $\forall z \in \Zdom, ~~ k_{\Zdom}(\cdot,z) \in \Hdom_{k_{\Zdom}}$
    \item $\forall z \in \Zdom, ~~ \forall f \in \Hdom_{k_{\Zdom}}, ~~\langle f, k_{\Zdom}(\cdot,z) \rangle_{\Hdom_{k_{\Zdom}}} = f(z) $.
\end{itemize}
Let $\mathcal M_+^1 (\Zdom)$ be the space of probability measures on $\Zdom$. By Lemma 3.1 of \textcite{review_kernel_mean_embedding}, a \textit{bounded} and \textit{measurable} kernel $k_{\Zdom}$ is a sufficient condition for $\mathcal M_+^1 (\Zdom)$ to be embedded in $\Hdom_{k_{\Zdom}}$ by the kernel mean embedding defined in the following definition.

\begin{definition}
\label{mean_embedding}
The kernel mean embedding of $\mathcal M_+^1 (\Zdom)$ in $\Hdom_{k_{\Zdom}}$ is defined as
\begin{equation*}
\begin{matrix}
 & \mathcal M_+^1 (\Zdom)  & \rightarrow & \Hdom_{k_{\Zdom}} \\ 
\mu_{k_{\Zdom}} ~ : & \Proba & \mapsto & \mu_{k_{\Zdom}}(\Proba) = \int_{\Zdom}  k_{\Zdom}(z, \cdot) d\Proba(z).
\end{matrix}
\end{equation*}
\end{definition}

The distance between the kernel mean embedding of two distributions $\Proba$ and $\mathbb Q$ is called the Maximum Mean Discrepancy (MMD) and is denoted by $\operatorname{MMD}_{k_{\Zdom}}(\Proba, \mathbb Q) = ||\mu_{k_{\Zdom}}(\Proba)-\mu_{k_{\Zdom}}(\mathbb Q)||_{\Hdom_{k_{\Zdom}}}$ (see Figure \ref{fig:mean_embedding}). To be a distance between the distributions $\Proba$ and $\mathbb Q$, the kernel mean embedding $\mu_{k_{\Zdom}}$ must be injective. In this case, the kernel $\mu_{k_{\Zdom}}$ is said to be \textit{characteristic}.
\begin{figure}[h]
    \centering
    \begin{tikzpicture}[scale=2]

\draw[->] (-1.2,0) -- (1.7,0) node[pos=0.5,below] {$\mathcal{Z}$};

\draw[violet, thick, domain=-1.1:1.6, samples=100] plot (\x, {0.8*exp(-\x*\x*2)});
\draw[forestgreen, thick, domain=-1.1:1.6, samples=100] plot (\x, {0.6*exp(-(\x-0.5)*(\x-0.5)*3)});

\node[violet] at (0,0.9) {$\mathbb P$};
\node[forestgreen] at (0.5,0.7) {$\mathbb Q$};

 \draw[fill=gray!20, opacity=0.7] (1.7,1.2)
        to[out=0,in=180] (2.5,1.3)
        to[out=0,in=45] (3.2,1)
        to[out=-135,in=0] (2.8,0)
        to[out=180,in=-90] (2,0)
        to[out=90,in=-135] (1.5,0.5)
        to[out=45,in=180] (1.7,1.2);

\node at (2.8,1) {$\mathcal H_{k_{\mathcal Z}}$};

\draw[->, thick, violet] (0.15,0.9) to[out=30, in=150] (1.7,1) ;
\draw[->, thick, forestgreen] (0.65,0.7) to[out=30, in=200] (2.3,0.2) ;


\node[draw, circle, fill=violet, inner sep=0pt, minimum size=3pt] at (1.75,0.95) {};
\node[violet] at (2,1.05) { $\scriptstyle \mu_{k_{\mathcal Z}}(\mathbb P)$};
\node[draw, circle, fill=forestgreen, inner sep=0pt, minimum size=3pt] at (2.35,0.25) {};
\node[forestgreen] at (2.6,0.35) { $\scriptstyle \mu_{k_{\mathcal Z}}(\mathbb Q)$};

\draw[>=stealth,<->] (1.75,0.95) -- (2.35,0.25) node[pos=0.45, right] { $\scriptstyle \operatorname{MMD}_{k_{\mathcal Z}}(\textcolor{violet}{\mathbb P}, \textcolor{forestgreen}{\mathbb Q})$};

\end{tikzpicture}
    \caption{Kernel mean embedding}
    \label{fig:mean_embedding}
\end{figure}
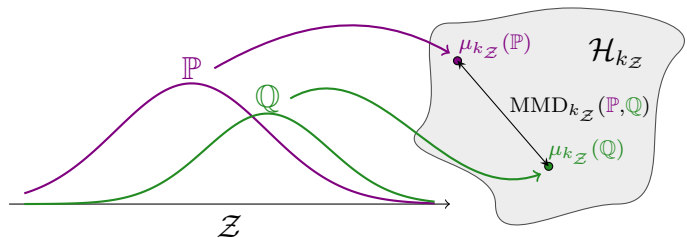
The MMD can then be used to define HSIC for the sensitivity analysis context, as proposed in \textcite{veiga2015}.

Let $\eta : \Udom \rightarrow \Zdom$ be a measurable model with respect to the Borel $\sigma$ algebras $\Bdom_{\bm U}$ and $\Bdom_{\Zdom}$, where $\Udom = \Udom_1 \times 
... \times \Udom_p \subset \Real^p$. Let $(\Omega, \Fdom, \Proba)$ be a probability space on which each input $U_i$ is a random variable of probability distribution $\Proba_{U_i}$. For any subset of indices $A \subset \{1,...,p\}$, $\bm U_A$ will denote the random vector $(U_i)_{i \in A}$. We are interested in knowing how the uncertainty of $Z=\eta(\bm U)$ can be attributed to the different inputs (or group of inputs). To do this, kernel-based sensitivity analysis relies on measuring the dependence between $\bm U_A$ and $Z$ by computing the MMD between the joint distribution $\Proba_{\bm U_A,Z}$ and the product of the marginal distributions $\Proba_{\bm U_A} \otimes \Proba_Z$. If this distance is zero, then $\bm U_A$ and $Z$ are independent, i.e. $\bm U_A$ has no effect on $Z$. This distance is called the Hilbert-Schmidt Independence Criterion (HSIC), first defined in \textcite{Gretton_hsic1}.

\begin{definition}
\label{hsic_def}
For $A \subset \{1,...,p\}$, let $k=k_{A} \otimes k_{\Zdom}$ be a kernel inducing the RKHS $\Hdom_k$, with $k_{A}$ a kernel on the space $\Udom_A=\otimes_{i\in A} \Udom_i$. The Hilbert-Schmidt Independence Criterion between $\bm U_A$ and $Z$, denoted by $\operatorname{HSIC}(\bm U_A,Z)$, is defined as
\begin{equation*}
\operatorname{HSIC}(\bm U_A,Z):=\operatorname{MMD}(\Proba_{\bm U_A,Z},\Proba_{\bm U_A}\otimes\Proba_{Z})^2=||\mu_k(\bm U_A,Z)-\mu_{k_A}(\bm U_A) \otimes \mu_{k_{\Zdom}}(Z)||^2_{\Hdom_k}.
\end{equation*}
\end{definition}

HSIC can then be used to test the independence between $\bm U_A$ and $Z$. For the test to be consistent, i.e. $\operatorname{HSIC}(\bm U_A,Z)=0$ if and only if $\bm U_A$ and $Z$ are independent, it is sufficient that both $k_{A}$ and $k_{\Zdom}$ are characteristic (which doesn't necessarily mean that $k$ is characteristic). This result can be obtained by combining Theorem 3.11 in \textcite{Lyons_2013} and Proposition 29 in \textcite{Sejdinovic_2013} as explained in the introduction of \textcite{szabo_characteristic_2018}.
To conduct screening, an independence test is performed for each input $U_i$ : $(\mathcal H^i_0):~ \operatorname{HSIC}(U_i,Z)=0 ~~~~\text{versus}~~~~  (\mathcal H^i_1):~ \operatorname{HSIC}(U_i,Z)>0.$

One of the reasons HSIC has become popular is that it can be expressed very simply in terms of kernel functions. From Lemma 1 of \textcite{Gretton_hsic1} we have
\begin{align}
\label{hsic_sum_expect1}
\operatorname{HSIC}(\bm U_A,Z)= &~\Esp[k_{A}(\bm U_A,{\bm U_A}' )k_{\Zdom}(Z,Z')]\\
&+\Esp[k_{A}(\bm U_A,{\bm U_A}')]\Esp[k_{\Zdom}(Z,Z')] \nonumber\\
&-2 \Esp[ \Esp[k_{A}(\bm U_A,{\bm U_A}')|\bm U_A]\Esp[k_{\Zdom}(Z,Z')|Z]], \nonumber
\end{align}
where $({\bm U_A}',Z')$ is an independent copy of $(\bm U_A,Z)$. This expression makes estimating the HSIC very easy, since it can be estimated with biased or unbiased estimators based on U- and V-statistics as introduced in \textcite{Gretton_hsic1} and \textcite{song_supervised_2007}. If the sample size is large enough, asymptotic estimation of the p-value associated to the independence test can be used, as suggested in \textcite{gretton_kernel_2007}. For smaller samples, a permutation-based technique can be used (see \textcite{delozzo2014new}). 

However, HSICs are not sufficient for ranking. The values $\operatorname{HSIC}(U_i,Z)$ must first be normalized in order to compare them and rank the inputs. In the context of sensitivity analysis, several normalizations have been studied, such as normalization by $\sum_{i=1}^p \operatorname{HSIC}(U_i,Z)$ (see \textcite{spagnolarticle}) or by $\sqrt{\operatorname{HSIC}(U_i,U_i)\operatorname{HSIC}(Z,Z)}$ (\textcite{veiga2015}). These indices have been used for ranking purposes, but they are not as satisfactory as the ANOVA decomposition of Sobol' indices. However, a recent study (\textcite{daveiga:hal-03108628}) provides an ANOVA-like decomposition that justifies the use of HSIC for ranking purposes. 

\subsection{HSIC-ANOVA indices}
\label{subsec:ranking}
In \textcite{daveiga:hal-03108628} an ANOVA-like decomposition of the HSIC is proposed, which makes it usable to rank the inputs by influence. This decomposition requires strong assumptions, particularly the independence between the inputs and the ANOVA property of the input kernel.
\begin{definition}[Orthogonal and ANOVA kernel]
Let $\Udom$ be a measurable space. A kernel $k:\Udom \times \Udom \rightarrow \Real$ is said to be orthogonal with respect to a probability measure $\nu \in \mathcal M_1^+(\Udom)$ if
\begin{equation*}
\forall u \in \Udom, \int_{\Udom} k(u,z)d\nu(z)=0.
\end{equation*}

A kernel $K:\Udom \times \Udom \rightarrow \Real$ is said to be ANOVA w.r.t. $\nu$ if it can be decomposed as $K=1+k$ where $k$ is orthogonal w.r.t. $\nu$.
\end{definition}

\begin{theorem}[ANOVA decomposition of $\operatorname{HSIC}$]
\label{anovahsic}
Assuming that:
\begin{itemize}
\item[i.]The inputs $U_1,...,U_p$ are mutually independent;
\item[ii.]Each input has an ANOVA kernel $K_i$ w.r.t. the input distribution $\Proba_{U_i}$. For any group of inputs $\bm U_A$ with $A \subset \{1,...,p\}$, the associated kernel is defined by
$$K_A = \bigotimes_{i\in A} K_i;$$
\item[iii.] For any $A \subset \{1,...,p\}$, $\Esp [K_A(\bm U_A, \bm U_A)] < + \infty$ and  $\Esp[k_{\Zdom}(Z,Z)] < + \infty$.
\end{itemize}
Then the ANOVA decomposition of the $\operatorname{HSIC}$ is given by

$$\operatorname{HSIC}(\boldsymbol{U}, Z)=\sum_{A \subseteq\{1, \ldots, p\}} \sum_{B \subseteq A}(-1)^{|A|-|B|} \operatorname{HSIC}\left(\boldsymbol{U}_B,Z\right).$$
\end{theorem}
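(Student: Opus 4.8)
The plan is to reduce the stated identity to an additivity property of $\operatorname{HSIC}$ over the orthogonal components of the ANOVA kernel, and then to invert a subset-sum relation by Möbius inversion. The starting observation is that, by assumption (ii), the kernel attached to any group $\bm U_B$ factorizes as
$$K_B = \bigotimes_{i \in B}(1 + k_i) = \sum_{C \subseteq B}\Bigl(\bigotimes_{i\in C}k_i\Bigr) =: \sum_{C\subseteq B}k_C^{\otimes},$$
a finite sum of tensor products of the orthogonal parts $k_i$ (with $k^{\otimes}_\emptyset := 1$). First I would record that the map $k_A \mapsto \operatorname{HSIC}(\bm U_A, Z)$ is linear in the input kernel for fixed $k_{\Zdom}$: each of the three expectations in \eqref{hsic_sum_expect1} is linear in $k_A$. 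Combined with the display above, this yields, for every $B \subseteq \{1,\dots,p\}$,
$$\operatorname{HSIC}(\bm U_B, Z) = \sum_{C\subseteq B}\operatorname{HSIC}_{k_C^{\otimes}}(\bm U_B, Z),$$
where $\operatorname{HSIC}_{k}$ denotes the functional of \eqref{hsic_sum_expect1} evaluated with input kernel $k$. Assumption (iii) guarantees that every term is finite, so this rearrangement is legitimate.

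The main work — and the step where all three hypotheses are genuinely used — is to show that each orthogonal-component term reduces to a ``pure interaction'' quantity depending only on $\bm U_C$. Writing out $\operatorname{HSIC}_{k_C^{\otimes}}(\bm U_B, Z)$ through \eqref{hsic_sum_expect1}, I would use the independence of the inputs (i) to factor the conditional and unconditional expectations of $k_C^{\otimes}(\bm U_B, \bm U_B') = \prod_{i\in C}k_i(U_i, U_i')$ across coordinates, and then the orthogonality of each $k_i$ w.r.t. $\Proba_{U_i}$ (ii) to see that $\int k_i(u, \cdot)\,d\Proba_{U_i} = 0$. This kills the marginal term $\Esp[k_C^{\otimes}(\bm U_B,\bm U_B')]\,\Esp[k_{\Zdom}(Z,Z')]$ and the cross term $\Esp\bigl[\Esp[k_C^{\otimes}(\bm U_B,\bm U_B')\mid\bm U_B]\,\Esp[k_{\Zdom}(Z,Z')\mid Z]\bigr]$ whenever $C\neq\emptyset$, leaving only
$$\operatorname{HSIC}_{k_C^{\otimes}}(\bm U_B, Z) = \Esp\Bigl[\prod_{i\in C}k_i(U_i, U_i')\,k_{\Zdom}(Z, Z')\Bigr] =: \operatorname{HSIC}^{*}_C,$$
which indeed depends only on $(\bm U_C, Z)$ (and $\operatorname{HSIC}^{*}_\emptyset = 0$). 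Substituting back gives the key subset-sum relation
$$\operatorname{HSIC}(\bm U_B, Z) = \sum_{C\subseteq B}\operatorname{HSIC}^{*}_C. \qquad (\star)$$

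To finish, I would invert $(\star)$ on the Boolean lattice: Möbius inversion gives $\operatorname{HSIC}^{*}_A = \sum_{B\subseteq A}(-1)^{|A|-|B|}\operatorname{HSIC}(\bm U_B, Z)$ for every $A$. Summing this over all $A \subseteq \{1,\dots,p\}$ and then applying $(\star)$ with $B = \{1,\dots,p\}$ yields
$$\sum_{A\subseteq\{1,\dots,p\}}\sum_{B\subseteq A}(-1)^{|A|-|B|}\operatorname{HSIC}(\bm U_B, Z) = \sum_{A\subseteq\{1,\dots,p\}}\operatorname{HSIC}^{*}_A = \operatorname{HSIC}(\bm U, Z),$$
which is exactly the claimed decomposition. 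I expect the delicate point to be the vanishing of the two unwanted terms in the previous paragraph: it is there that independence and the ANOVA/orthogonality structure must be combined carefully (the factorization requires the copy $\bm U_B'$ to carry the product law, and Fubini must be justified via (iii)). The combinatorial collapse of the double sum, by contrast, is routine — it could even be checked directly by interchanging the order of summation and using $\sum_{C'\subseteq S}(-1)^{|C'|} = \mathbf{1}_{\{S=\emptyset\}}$ — but the content that turns the identity into a genuine \emph{ANOVA} decomposition is the identification of each Möbius term with the pure-interaction index $\operatorname{HSIC}^{*}_A$.
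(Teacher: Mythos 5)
The paper does not actually prove this theorem: it is recalled from the cited reference (\textcite{daveiga:hal-03108628}) and stated without proof, and the appendix only contains proofs of Lemma 3.1 and Propositions 3.2--3.5. So there is no in-paper argument to compare against; judged on its own, your proof is correct and follows the route of the cited reference. The chain of steps is sound: the expansion $K_B=\sum_{C\subseteq B}\bigotimes_{i\in C}k_i$, linearity of the three-term HSIC functional in the input kernel, the use of independence to factor $\Esp[\prod_{i\in C}k_i(U_i,U_i')]$ and its conditional version across coordinates, orthogonality to kill the marginal and cross terms for $C\neq\emptyset$, and Möbius inversion of $(\star)$. Your handling of $C=\emptyset$ is right (the constant kernel gives $\operatorname{HSIC}_1=0$, so setting $\operatorname{HSIC}^{*}_{\emptyset}=0$ rather than $\Esp[k_{\Zdom}(Z,Z')]$ is the correct convention), and your closing remark is well taken: the displayed double-sum identity is an inclusion--exclusion triviality valid for any set function $B\mapsto\operatorname{HSIC}(\bm U_B,Z)$, and the genuine content of the theorem, which is where hypotheses (i)--(iii) enter, is the identification of each Möbius term with the pure-interaction quantity $\Esp\bigl[\prod_{i\in A}k_i(U_i,U_i')\,k_{\Zdom}(Z,Z')\bigr]$. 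The only point worth tightening is the integrability bookkeeping: to justify Fubini for the component terms you need $\Esp\lvert k_i(U_i,U_i')\rvert<\infty$, which follows from (iii) via $\lvert K_i(u,v)\rvert\le\sqrt{K_i(u,u)K_i(v,v)}$ and independence, but this is routine.
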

In addition to providing a decomposition of the HSIC, an ANOVA input kernel also simplifies the HSIC expression of the Equation \ref{hsic_sum_expect1}, which becomes
\begin{equation*}
    \operatorname{HSIC}(\bm U_A,Z)= \Esp \left[ (K_A(\bm U_A,{\bm U_A}')-1)k_{\Zdom}(Z,Z') \right].
\end{equation*}
Given an independent and identically distributed (iid) sample $(\bm U_A^{(1)},Z^{(1)}),...,(\bm U_A^{(n)},Z^{(n)})$ of $(\bm U_A, Z)$, the previous expression is easily estimated by a U-statistic

\begin{equation}
\label{unbiased estimator}
\widehat{\operatorname{HSIC}}\left(\bm U_A, Z\right)=\frac{2}{n(n-1)} \sum_{i <j}^{n}\left(K_{A}\left(U_A^{(i)}, U_A^{(j)}\right)-1\right) k_{\Zdom}\left(Z^{(i)}, Z^{(j)}\right).
\end{equation}

The decomposition given in Theorem \ref{anovahsic} allows SA indices to be defined in a similar way to Sobol' indices.
\begin{definition}[HSIC-ANOVA indices]
The first-order and total-order HSIC-ANOVA indices can be defined by
$$\forall i \in \{1,...,p\}~ S^{\operatorname{HSIC}}_i :=\frac{\operatorname{HSIC}(U_i,Z)}{\operatorname{HSIC}(\bm U,Z)} \text{  and  } S^{\operatorname{HSIC}}_{T_i} :=1-\frac{\operatorname{HSIC}(\bm U_{-i},Z)}{\operatorname{HSIC}(\bm U,Z)} ,
$$
with $\bm U_{-i}=(U_1,...,U_{i-1},U_{i+1},...,U_d)$. These indices can be generalized to groups of inputs.
\end{definition}
These indices can be used to rank the inputs by influence by ranking their indices. It is also possible to use either  first-order or total-order indices for screening, as we still have 
$$S^{\operatorname{HSIC}}_{i}=0 \Longleftrightarrow S^{\operatorname{HSIC}}_{T_i}=0 \Longleftrightarrow U_i \perp \Gamma,$$
as shown in \textcite{hsic_anova_test}. The authors also propose associated independence tests that have better statistical power than the usual tests performed with HSIC indices. 

It is important to note that the three assumptions of Theorem \ref{anovahsic}, and especially ii., restrict the choice of the input kernel to be ANOVA. Few kernels are known to be ANOVA. The best known are the Sobolev kernels (see Theorem 3.3 of \textcite{sarazin:cea-04320711}), which correspond to the reproducing kernel of some Sobolev spaces. 
These kernels are ANOVA with respect to the uniform law on $[0,1]$ and are also characteristic (Remark 3.4 and Proposition 3.6 of \textcite{sarazin:cea-04320711}). It is also possible to derive ANOVA kernels from classic kernels. One of the possible transformations is suggested in \textcite{anova_kernel}: for any given kernel $k$, its ANOVA counterpart $K$ w.r.t. $\nu$ is defined by
\begin{align}
\label{kernelanovatransfo}
    K(x,y)=1+k(x,y)&-\int k(x,z) d\nu(z) - \int k(z,y) d\nu(z) \\
    &+ \int \int  k(z,z') d\nu(z)d\nu(z'). \nonumber
\end{align}
 In the case of input variables uniformly distributed on $[0,1]$, the previous transformation is known analytically for some classic kernels given in the appendix of \textcite{anova_kernel}. 

 Given ANOVA and characteristic input kernels $K_i$ and a characteristic output kernel $k_{\Zdom}$, first-order and total-order HSIC-ANOVA indices can be computed and used for both screening and ranking. This approach has the strength of being very permissive about the nature of the output. Indeed, once one has a measurable and bounded characteristic kernel on an output space on which a random element can be defined, one can derive HSIC-ANOVA sensitivity indices to perform a sensitivity analysis of the complex model. We propose to do so in the next section dealing with an output space of sets.


\section{Kernel-based Sensitivity Analysis for sets}
\label{sec:kset}
In this section, we apply the methodology of the previous section to the case of set-valued outputs, i.e., where $\Zdom$ is a space of sets. We introduce a probabilistic framework for random sets on which we propose a customized kernel and study its property so that it can be used to define HSIC-ANOVA indices for sets. We also introduce and study the estimation of these new indices. Basic concepts of functional analysis and topology useful in this section can be found, for example, in \textcite{book:functionalanalysis}.

\subsection{Probabilistic framework of random sets}
\label{subsec:random_set}
A general theory of random sets can be found in \textcite{molchanov_random_2017}. In our case, however, we propose to define a random set as $\Gamma=\eta(\bm U)$, where $\eta$ is a measurable function from $\Udom$ to a space of sets to be defined. First, we define the space of sets we are working in, and a corresponding $\sigma$-algebra.

Let $\mathscr L(\Xdom)$ be the space of all Lebesgue-measurable subsets of a compact space $\Xdom \subset \Real ^d$.  Let $\delta : \mathscr L(\Xdom) \times \mathscr L(\Xdom) \rightarrow \Real$ be the Lebesgue measure of the symmetric difference defined by $\delta(\gamma_1, \gamma_2) = \lambda(\gamma_1\Delta \gamma_2)$ where $\Delta$ is the symmetric difference defined by $\gamma_1\Delta \gamma_2= (\gamma_1\cup \gamma_2) \backslash (\gamma_1\cap \gamma_2)$ and $\lambda$ is the Lebesgue measure on $\Xdom$. $L^2(\Xdom)$ denotes the space of squared Lebesgue integrable functions on $\Xdom$, and the $L^2$ norm is denoted by $||\cdot||_2$. $\delta$ can also be seen in terms of the $L^2$ norm of the indicator functions of $\gamma_1$ and $\gamma_2$, as given in the following lemma. 
\begin{restatable}{lemme}{lemma}
     Let $\gamma_1,\gamma_2 \in \mathscr L (\Xdom)$, we have
 $$
 \delta(\gamma_1,\gamma_2) = \left|\left|\mathbbold 1_{\gamma_1} - \mathbbold 1_{\gamma_2}\right| \right|^2_{2},
 $$
 and 
 $$
 \delta(\gamma_1,\gamma_2)=0  \Leftrightarrow \gamma_1 = \gamma_2 ~\lambda \text{-almost everywhere.}
 $$
 \label{lemme}
\end{restatable}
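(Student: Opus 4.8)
The plan is to reduce both assertions to a single pointwise identity between indicator functions. First I would note that for every $x \in \Xdom$ the real number $\bigl(\mathbbold 1_{\gamma_1}(x) - \mathbbold 1_{\gamma_2}(x)\bigr)^2$ equals $1$ when exactly one of $\mathbbold 1_{\gamma_1}(x), \mathbbold 1_{\gamma_2}(x)$ is $1$, and equals $0$ otherwise. Going through the four cases ($x$ lies in both of $\gamma_1,\gamma_2$, in neither, or in exactly one of them) shows that the former happens precisely when $x \in \gamma_1 \Delta \gamma_2$, so that
$$\bigl(\mathbbold 1_{\gamma_1} - \mathbbold 1_{\gamma_2}\bigr)^2 = \mathbbold 1_{\gamma_1 \Delta \gamma_2} \quad \text{everywhere on } \Xdom.$$

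For the first equality I would then integrate this identity with respect to $\lambda$. Since $\gamma_1$ and $\gamma_2$ are Lebesgue-measurable and $\Xdom$ is compact (hence of finite measure), the indicators are measurable and bounded, so they lie in $L^2(\Xdom)$ and the integral is well defined. This gives
$$||\mathbbold 1_{\gamma_1} - \mathbbold 1_{\gamma_2}||_2^2 = \int_{\Xdom} \bigl(\mathbbold 1_{\gamma_1} - \mathbbold 1_{\gamma_2}\bigr)^2 \, d\lambda = \int_{\Xdom} \mathbbold 1_{\gamma_1 \Delta \gamma_2} \, d\lambda = \lambda(\gamma_1 \Delta \gamma_2) = \delta(\gamma_1, \gamma_2),$$
which is the claimed formula.

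For the equivalence, the key remark is that the set of points where $\mathbbold 1_{\gamma_1}$ and $\mathbbold 1_{\gamma_2}$ disagree is exactly $\gamma_1 \Delta \gamma_2$. Consequently $\delta(\gamma_1,\gamma_2) = \lambda(\gamma_1 \Delta \gamma_2) = 0$ holds if and only if this disagreement set is $\lambda$-null, which is by definition the statement that $\gamma_1 = \gamma_2$ $\lambda$-almost everywhere; both implications are then immediate. Equivalently, one may apply the standard fact that a nonnegative measurable function has zero $L^2$ norm if and only if it vanishes $\lambda$-almost everywhere to the function $\mathbbold 1_{\gamma_1} - \mathbbold 1_{\gamma_2}$, combined with the first equality already proved.

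I do not expect any genuine obstacle here: the lemma is essentially the observation that the symmetric-difference set function $\delta$ coincides with the squared $L^2$ distance between indicator functions. The only steps that merit a line of care are the four-case verification establishing $\bigl(\mathbbold 1_{\gamma_1} - \mathbbold 1_{\gamma_2}\bigr)^2 = \mathbbold 1_{\gamma_1 \Delta \gamma_2}$, and recording the measurability and finiteness of $\lambda$ on $\Xdom$ that guarantee all the integrals above are meaningful.
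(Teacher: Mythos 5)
Your proposal is correct and follows essentially the same route as the paper: both reduce the claim to the pointwise identity $\left(\mathbbold 1_{\gamma_1}-\mathbbold 1_{\gamma_2}\right)^2=\mathbbold 1_{\gamma_1\Delta\gamma_2}$ (the paper verifies it by algebraic manipulation of indicators, you by a four-case check) and then integrate, with the a.e.-equality equivalence handled identically. No gaps.
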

Thus $\delta$ is a pseudo-metric in $\mathscr L(X)$, since $\delta(\gamma_1,\gamma_2)=0$ only implies that $\gamma_1=\gamma_2$ $\lambda$-almost everywhere. To work in a metric space, we quotient $\mathscr L (\Xdom)$ by the equivalence relation $\delta(\gamma_1,\gamma_2)=0$. The resulting quotient space is called $\mathscr L^* (\Xdom)$. $\delta$ remains well defined on $\mathscr L_2^\star (\Xdom)=\mathscr L^\star (\Xdom)\times \mathscr L^\star (\Xdom)$ and becomes a distance sometimes called the Fréchet-Nikodym-Aronszajn metric (see \textcite{Marczewski1958OnAC}). With this distance, $\mathscr L^*(\Xdom)$ can be provided with the Borel $\sigma$-algebra $\mathcal B_{\Gamma}:= \Bdom ( \mathscr L^*(\Xdom),\delta)$ to make it a measurable space. A random set is then simply defined as $\Gamma = \eta (\bm U)$ where $\eta : \Udom \rightarrow \mathscr L^*(\Xdom)$ is measurable with respect to $\Bdom_{\bm U}$ and $\Bdom_{\Gamma}$. The probability distribution of a random set $\Gamma$ is then defined as the push-forward probability measure of $\Proba_{\Udom}$ through $\eta$.

Examples of random set are numerous in industrial applications, especially in the context of optimization or inversion, where determining the feasible set of solutions is crucial. In the presence of uncertainties, this feasible set is random and can be called an excursion set as given in the following definition.
\begin{restatable}{definition}{excursionrandom}
\label{excursion_random}
Let $g: \Xdom \times \Udom \rightarrow \Real$ be a measurable function. The excursion set $\Gamma_g$ associated with the constraint $g \leq 0$ is defined by
\begin{equation*}
\Gamma_g= \{x \in X, ~~g(x,\bm U) \leq 0 \}.
\end{equation*} 
\end{restatable}

\subsection{A kernel between sets}
\label{subsec:k_set}
Now that we have a framework of random sets, we want to follow the methodology of Section \ref{sec:generic} but with $\mathcal Z = \mathscr L^*(\Xdom)$ to define HSIC indices for sets. To do this, we first need a kernel on sets, i.e. a symmetric and positive definite function $k : \mathscr L^*(\Xdom)\times \mathscr L^*(\Xdom)\rightarrow \Real$. We propose the function $k_{set}$ defined by
\begin{equation*}
    \forall \gamma_1,\gamma_2 \in \mathscr L^* (\Xdom), ~~ k_{set}(\gamma_1,\gamma_2):= e^{- \frac{\lambda(\gamma_1\Delta \gamma_2)}{2\sigma^2}},
\end{equation*}
 where $\sigma$ is a positive scalar.  $k_{set}$ is inspired by the classic Gaussian kernel, since the Lebesgue measure of the symmetric difference is equal to the $L^2$ norm of the difference between two indicator functions, as shown in Lemma \ref{lemme}. This function was shown to be positive definite in \textcite{set-index-process-herbin} when the space of sets is an indexing collection, but the proof is similar in our case. Moreover, $k_{set}$ is bounded and measurable.
 
\begin{restatable}{prop}{ksetkernel}
\label{prop_k_def_pos}
The function $k_{set}: \mathscr L^* (\Xdom) \times \mathscr L^* (\Xdom) \rightarrow \Real$ defined by
$$\forall \gamma_1,\gamma_2 \in \mathscr L^* (\Xdom), ~~ k_{set}(\gamma_1,\gamma_2)=e^{- \frac{\lambda(\gamma_1\Delta \gamma_2)}{2\sigma^2}},$$ is symmetric and positive definite for any positive scalar $\sigma$ which means that $k_{set}$ is a kernel.
\end{restatable}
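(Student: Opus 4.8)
The plan is to reduce the statement to the classical fact that the Gaussian kernel on a Hilbert space is positive definite, using Lemma \ref{lemme} to realize $k_{set}$ as exactly such a kernel acting on indicator functions. Symmetry I would dispatch immediately: since $\gamma_1\Delta\gamma_2=\gamma_2\Delta\gamma_1$ we have $\lambda(\gamma_1\Delta\gamma_2)=\lambda(\gamma_2\Delta\gamma_1)$, hence $k_{set}(\gamma_1,\gamma_2)=k_{set}(\gamma_2,\gamma_1)$.

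For positive definiteness, I would first note that $\Xdom$ is compact and therefore of finite Lebesgue measure, so every indicator $\mathbbold 1_\gamma$, $\gamma\in\mathscr L^*(\Xdom)$, lies in $L^2(\Xdom)$. By Lemma \ref{lemme}, $\lambda(\gamma_1\Delta\gamma_2)=||\mathbbold 1_{\gamma_1}-\mathbbold 1_{\gamma_2}||_2^2$, so that
$$ k_{set}(\gamma_1,\gamma_2)=\exp\left(-\frac{||\mathbbold 1_{\gamma_1}-\mathbbold 1_{\gamma_2}||_2^2}{2\sigma^2}\right). $$
Thus $k_{set}=k_G\circ(\Phi\times\Phi)$, where $\Phi:\mathscr L^*(\Xdom)\to L^2(\Xdom)$, $\Phi(\gamma)=\mathbbold 1_\gamma$, and $k_G(f,h)=\exp(-||f-h||_2^2/(2\sigma^2))$ is the Gaussian kernel on $L^2(\Xdom)$. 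Because precomposing a positive definite kernel with an arbitrary map on its argument preserves positive definiteness, it is enough to prove that $k_G$ is positive definite on the Hilbert space $L^2(\Xdom)$.

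To establish this, I would expand
$$ k_G(f,h)=\exp\left(-\frac{||f||_2^2}{2\sigma^2}\right)\exp\left(-\frac{||h||_2^2}{2\sigma^2}\right)\exp\left(\frac{\langle f,h\rangle_2}{\sigma^2}\right). $$
The first two factors form a kernel of the type $\psi(f)\psi(h)$, which is trivially positive definite. The inner product $(f,h)\mapsto\langle f,h\rangle_2$ is positive definite since its Gram matrices are positive semidefinite; hence each power $\langle f,h\rangle_2^n$ is positive definite (products of positive definite kernels are positive definite), and the series $\exp(\langle f,h\rangle_2/\sigma^2)=\sum_{n\ge 0}\langle f,h\rangle_2^n/(n!\,\sigma^{2n})$ is a convergent combination with nonnegative coefficients of positive definite kernels, hence positive definite. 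A final product of positive definite kernels gives positive definiteness of $k_G$, and therefore of $k_{set}$.

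The step I would treat most carefully is the well-definedness of $\Phi$ on the quotient space $\mathscr L^*(\Xdom)$: two representatives of a class differ on a $\lambda$-null set, so their indicators agree in $L^2(\Xdom)$, which is exactly the equivalence $\delta(\gamma_1,\gamma_2)=0\Leftrightarrow\gamma_1=\gamma_2$ $\lambda$-a.e. from Lemma \ref{lemme}; the remaining manipulations are routine positive-definiteness calculus. Alternatively, one could bypass the explicit expansion and invoke Schoenberg's theorem, since Lemma \ref{lemme} shows $\lambda(\gamma_1\Delta\gamma_2)$ is a function of negative type.
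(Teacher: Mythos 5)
Your proof is correct, and it shares the paper's first step --- invoking Lemma \ref{lemme} to rewrite $\lambda(\gamma_1\Delta\gamma_2)$ as $\|\mathbbold 1_{\gamma_1}-\mathbbold 1_{\gamma_2}\|_2^2$, so that $k_{set}$ becomes the Gaussian kernel on $L^2(\Xdom)$ pulled back through the (well-defined, as you check) indicator embedding. Where you diverge is in how you certify that the Gaussian kernel on a Hilbert space is positive definite. The paper invokes the Bochner--Minlos theorem to produce a white-noise element $G$ with $\Esp\bigl(e^{i\langle f,G\rangle_{L^2}}\bigr)=e^{-\frac12\|f\|_2^2}$ and then writes the quadratic form $\sum_{k,l}\alpha_k\alpha_l\,k_{set}(\gamma_k,\gamma_l)$ as $\Esp\bigl|\sum_k \alpha_k e^{\frac{i}{\sqrt2\sigma}\langle\mathbbold 1_{\gamma_k},G\rangle_{L^2}}\bigr|^2\ge 0$, i.e.\ it realizes the kernel as a covariance. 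You instead factor $e^{-\|f-h\|_2^2/(2\sigma^2)}=\psi(f)\psi(h)\,e^{\langle f,h\rangle_2/\sigma^2}$ and use the standard positive-definiteness calculus (the inner product is positive definite, Schur products and nonnegative-coefficient convergent series of positive definite kernels are positive definite). Both arguments are complete; yours is more elementary and self-contained, avoiding the infinite-dimensional Bochner--Minlos machinery, while the paper's one-line probabilistic identity is shorter once that theorem is granted and mirrors the cited reference. Your closing remark that one could alternatively apply Schoenberg's theorem, since Lemma \ref{lemme} exhibits $\delta$ as a function of negative type, is also a valid third route.
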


\begin{restatable}{prop}{kboundedmes}
\label{prop_k_bounded_mes}
The kernel $k_{set}$ is bounded and measurable from $\mathscr L_2^\star (\Xdom)=\mathscr L^\star (\Xdom) \times \mathscr L^\star (\Xdom)$ to $\Real$.
\end{restatable}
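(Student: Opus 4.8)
The plan is to dispatch boundedness in one line and then concentrate on joint measurability, which is the only substantive point. For boundedness, I would use compactness of $\Xdom$: since $\lambda(\Xdom) < +\infty$, we have $0 \leq \lambda(\gamma_1\Delta\gamma_2) \leq \lambda(\Xdom)$ for all $\gamma_1,\gamma_2 \in \mathscr{L}^\star(\Xdom)$, so the exponent of $k_{set}$ is non-positive and bounded below. This yields $e^{-\lambda(\Xdom)/(2\sigma^2)} \leq k_{set}(\gamma_1,\gamma_2) \leq 1$, so $k_{set}$ is bounded.

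For measurability, the key observation is the factorization $k_{set} = \phi \circ \delta$, where $\phi(t) = e^{-t/(2\sigma^2)}$ is continuous on $\Real$ and $\delta$ is the Fréchet--Nikodym--Aronszajn metric on $\mathscr{L}^\star(\Xdom)$. Since any metric $d$ on a metric space $M$ is continuous as a map $M \times M \to \Real$ for the product topology, $\delta$ is continuous on $\mathscr{L}_2^\star(\Xdom)$, and hence so is $k_{set}$. Continuity then immediately gives that $k_{set}$ is Borel measurable with respect to the Borel $\sigma$-algebra $\mathcal{B}(\mathscr{L}_2^\star(\Xdom))$ of the product metric topology.

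The remaining and genuinely delicate step is to upgrade this to measurability with respect to the product $\sigma$-algebra $\mathcal{B}_\Gamma \otimes \mathcal{B}_\Gamma$, which is the $\sigma$-algebra relevant for the kernel mean embedding and the Fubini-type manipulations underlying the HSIC expression \eqref{hsic_sum_expect1}. These two $\sigma$-algebras coincide as soon as $(\mathscr{L}^\star(\Xdom),\delta)$ is separable, so I would establish separability through the isometric embedding $\iota : \gamma \mapsto \mathbbold 1_\gamma$. By Lemma \ref{lemme}, $\delta(\gamma_1,\gamma_2) = \|\mathbbold 1_{\gamma_1} - \mathbbold 1_{\gamma_2}\|_2^2$, and since indicators take values in $\{0,1\}$ this also equals $\|\mathbbold 1_{\gamma_1} - \mathbbold 1_{\gamma_2}\|_1$; hence $\iota$ is an isometry from $(\mathscr{L}^\star(\Xdom),\delta)$ into $(L^1(\Xdom),\|\cdot\|_1)$. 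As $\Xdom \subset \Real^d$ is compact, $L^1(\Xdom)$ is separable, so its subset $\iota(\mathscr{L}^\star(\Xdom))$ is separable, and therefore so is $\mathscr{L}^\star(\Xdom)$. Separability of each factor then yields $\mathcal{B}(\mathscr{L}_2^\star(\Xdom)) = \mathcal{B}_\Gamma \otimes \mathcal{B}_\Gamma$, and combined with the continuity established above this shows $k_{set}$ is jointly measurable.

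I expect the separability argument to be the main obstacle. It is precisely what legitimizes joint (product-$\sigma$-algebra) measurability rather than mere separate measurability in each argument, and it hinges on the nontrivial fact that the symmetric-difference metric is an honest $L^1$ distance between indicator functions, which lets us pull back the separability of $L^1(\Xdom)$ to the abstract set space.
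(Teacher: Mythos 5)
Your core argument is exactly the paper's: boundedness is immediate, and measurability follows because $\delta$, being a metric on $\mathscr L^\star(\Xdom)$, is continuous on the product $\mathscr L_2^\star(\Xdom)$ equipped with the product (sum) metric, so $k_{set}=\exp(-\delta/(2\sigma^2))$ is continuous and hence Borel measurable. The one thing you add --- upgrading from the Borel $\sigma$-algebra of the product topology to the product $\sigma$-algebra $\mathcal B_\Gamma\otimes\mathcal B_\Gamma$ via separability of $(\mathscr L^\star(\Xdom),\delta)$ --- is not in the paper's proof of this proposition, which stops at Borel measurability for the product metric; it is a legitimate and worthwhile refinement, and your $L^1$-isometry argument (using $\|\mathbbold 1_{\gamma_1}-\mathbbold 1_{\gamma_2}\|_2^2=\|\mathbbold 1_{\gamma_1}-\mathbbold 1_{\gamma_2}\|_1$) is correct; the paper only establishes the corresponding separability later, inside the proof of Proposition \ref{charac}, through the isometry $T$ into $L^2(\Xdom)$.
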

As in Definition \ref{mean_embedding}, this allows to define the mean embedding of random set distributions $\mu_{k_{set}}$. We then show that it is injective, i.e. $k_{set}$ is characteristic.

\begin{restatable}{prop}{charackernel}
The kernel $k_{set}$ is characteristic. 
\label{charac}
\end{restatable}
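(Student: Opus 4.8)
The plan is to reduce the statement to the fact that the Gaussian kernel on a Hilbert space is characteristic, exploiting the isometric embedding furnished by Lemma \ref{lemme}. Consider the map $\iota : \mathscr L^*(\Xdom) \to L^2(\Xdom)$, $\gamma \mapsto \mathbbold 1_\gamma$. By Lemma \ref{lemme} it satisfies $\|\iota(\gamma_1) - \iota(\gamma_2)\|_2 = \sqrt{\delta(\gamma_1,\gamma_2)}$, so $\iota$ is an isometry from $(\mathscr L^*(\Xdom),\sqrt\delta)$ onto its image $\mathcal I := \{\mathbbold 1_\gamma : \gamma \in \mathscr L^*(\Xdom)\}$, hence in particular a homeomorphism onto $\mathcal I$. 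Writing $k_G(f,g) := e^{-\|f-g\|_2^2/(2\sigma^2)}$ for the Gaussian kernel on $L^2(\Xdom)$, we have $k_{set} = k_G \circ (\iota \times \iota)$. First I would record that $\mathcal I$ is closed in $L^2(\Xdom)$: if $\mathbbold 1_{\gamma_n} \to f$ in $L^2$, a subsequence converges $\lambda$-almost everywhere, forcing $f \in \{0,1\}$ a.e., hence $f = \mathbbold 1_\gamma$ with $\gamma = \{f = 1\}$. Thus $\mathcal I$ is a complete separable subset, and since $\Xdom$ is compact it is bounded, $\mathcal I \subset \{f : \|f\|_2^2 \le \lambda(\Xdom)\}$.

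Since $\iota$ is an isometric homeomorphism onto $\mathcal I$, the pushforward $\iota_*$ is a bijection between $\mathcal M^1_+(\mathscr L^*(\Xdom))$ and the probability measures supported on $\mathcal I$, and $f \mapsto f\circ \iota$ is an isometric isomorphism between the RKHS of $k_G|_{\mathcal I}$ and $\Hdom_{k_{set}}$ that intertwines the two mean embeddings, $\mu_{k_{set}}(\mathbb P) = \mu_{k_G}(\iota_* \mathbb P)\circ \iota$. Consequently $\mu_{k_{set}}$ is injective on $\mathcal M^1_+(\mathscr L^*(\Xdom))$ if and only if $\mu_{k_G}$ is injective on the probability measures supported in the bounded set $\mathcal I$. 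It therefore suffices to prove that the Gaussian kernel $k_G$ on $L^2(\Xdom)$ is characteristic with respect to probability measures with bounded support.

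The core of the proof, and the main obstacle, is exactly this last claim: in finite dimension the Gaussian kernel is characteristic because its spectral measure has full support (Bochner's theorem), but that Fourier argument does not transfer to the infinite-dimensional space $L^2(\Xdom)$. I see two ways to close the gap. The direct route is to factor $k_G(f,g) = e^{-\|f\|_2^2/(2\sigma^2)}\, e^{-\|g\|_2^2/(2\sigma^2)}\, e^{\langle f,g\rangle_2/\sigma^2}$ and expand $e^{\langle f,g\rangle_2/\sigma^2} = \sum_{n\ge 0} \tfrac{\langle f,g\rangle_2^n}{n!\,\sigma^{2n}}$, which exhibits the feature map as the collection of symmetric-tensor monomials of $f$; equality of the two embeddings then forces all moments $\int \langle f,g\rangle_2^n\, d\nu(g)$ to agree for every $f\in\mathcal I$, and since both measures are supported in a bounded set (whose closed convex hull is weakly compact) the associated moment problem is determinate, so the measures coincide. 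The cleaner alternative is to invoke directly the result that Gaussian-type kernels built from an injective map into a separable Hilbert space are universal on bounded or compact sets and hence characteristic for measures supported there (Christmann and Steinwart). Either way, once $k_G$ is known to be characteristic on $\mathcal I$, the reduction of the previous paragraph yields that $k_{set}$ is characteristic. I expect the delicate point to be making the moment-determinacy argument rigorous in infinite dimension — justifying the interchange of sum and integral and the determinacy of the tensor moment problem on $\mathcal I$ — which is precisely why appealing to the existing universality theorem is the safer path.
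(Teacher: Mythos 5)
Your reduction is essentially the paper's: both proofs push $\mathscr L^*(\Xdom)$ into $L^2(\Xdom)$ via $\gamma \mapsto \mathbbold 1_\gamma$, use Lemma \ref{lemme} to see this is an isometry (up to the square root) onto the set of indicator functions, prove that this image is closed in $L^2(\Xdom)$ by exactly the a.e.-convergent-subsequence argument you give (which is also how the paper establishes that $\mathscr L^*(\Xdom)$ is Polish), and then appeal to an external theorem about Gaussian-type kernels built from injective maps into a separable Hilbert space. The difference, and the gap, lies entirely in which external theorem closes the argument.

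Your ``safer path'' invokes the Christmann--Steinwart result that such kernels are universal on \emph{compact} metric spaces. But $\mathcal I$ is only closed and bounded in $L^2(\Xdom)$, not compact: closed bounded subsets of an infinite-dimensional Hilbert space are not compact, and one can exhibit sequences of indicators (e.g.\ of dyadic unions in $[0,1]$) with pairwise $L^2$-distances bounded below, so no convergent subsequence exists. Universality on compact subsets therefore only yields characteristicness for measures with compact support, not for arbitrary elements of $\mathcal M^1_+(\mathscr L^*(\Xdom))$. The paper instead cites a proposition (Proposition 5.2 of the reference on characteristic kernels) asserting that $\varphi(\|T(x)-T(x')\|_H^2)$ is \emph{integrally strictly positive definite} whenever $T$ is injective and measurable from a Polish space into a separable Hilbert space and $\varphi$ is the Laplace transform of a suitable measure on $[0,+\infty)$ --- no compactness required --- and then uses that integral strict positive definiteness implies characteristic. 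Your first route has a separate problem: equality of the embeddings gives $\int k_G(f,g)\,d\nu_1(g)=\int k_G(f,g)\,d\nu_2(g)$ only for $f\in\mathcal I$, and you cannot extract the individual tensor moments term by term from this (the usual trick of scaling $f\mapsto tf$ and differentiating in $t$ leaves $\mathcal I$), nor is the determinacy of the resulting infinite-dimensional moment problem from projections along $\mathcal I$ alone established. The fix is simply to replace your cited theorem by the non-compact ISPD version; the rest of your argument then goes through as in the paper.
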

We now have all the ingredients to use HSIC-ANOVA indices for set-valued outputs. We can now screen the inputs using independence tests and rank them by their influence on the random set $\Gamma$ using either first-order or total-order indices, that we denote $S^{\operatorname{H}_{set}}_{i}$ and $S^{\operatorname{H}_{set}}_{T_i}$. HSIC-ANOVA indices applied to sets provide a solution for the screening and ranking of the inputs of a set-valued model. However, to achieve these purposes, they must first be estimated, but the presence of sets raises some difficulties.

\subsection{Estimation of HSIC-ANOVA indices for sets}
\label{subsec:estim}
As given in Equation \ref{unbiased estimator},  $\operatorname{HSIC}\left(\bm U_A, \Gamma\right)$ can be estimated by

\begin{equation*}
\widehat{\operatorname{HSIC}}\left(\bm U_A, \Gamma \right)=\frac{2}{n(n-1)} \sum_{i <j}^{n}\left(K_{A}\left(\bm U_A^{(i)}, \bm U_A^{(j)}\right)-1\right) k_{set}\left(\Gamma^{(i)}, \Gamma^{(j)}\right),
\end{equation*}
where $(\bm U_A^{(i)},\Gamma^{(i)}), i=1,...,n$ is an iid sample of $(\bm U_A,\Gamma)$.

For set-valued models, it is common to have access only to the knowledge of whether a given point $x \in \Xdom$ is in a set output $\Gamma$ or not. In terms of estimation, this means that the Lebesgue measure of the symmetric difference and thus $k_{set}$ must be estimated. 

For two random sets $(\Gamma^{(i)},\Gamma^{(j)})$, $k_{set}(\Gamma^{(i)},\Gamma^{(j)})=e^{- \frac{\lambda(\Gamma^{(i)}\Delta \Gamma^{(j)})}{2\sigma^2}}$ can be written as
\begin{equation*}
k_{set}(\Gamma^{(i)},\Gamma^{(j)})=\exp\left(-\frac{\lambda(\Xdom)}{2\sigma^2}\Esp[ \mathbbold 1 _ {\Gamma^{(i)} \Delta\Gamma^{(j)} }(\bm X)| (\Gamma^{(i)},\Gamma^{(j)}) ]\right),
\end{equation*} 
where $\bm X\sim \Udom(\Xdom)$ using that $\lambda(\Gamma)=\lambda(\Xdom) \Esp_{\bm X\sim \Udom(\Xdom)}[\mathbbold 1_\Gamma (\bm X) | \Gamma ]$. Then, given an iid sample $(\bm X^{(1)},...,\bm X^{(m)})$ of $\bm X$, we can estimate $k_{set}(\Gamma^{(i)},\Gamma^{(j)}) $ by
\begin{equation*}
 \widehat{k_{set}}(\Gamma^{(i)},\Gamma^{(j)}) =  \exp\left(-\frac{\lambda(\Xdom)}{2\sigma^2}\frac{1}{m}\sum_{k=1}^{m} \mathbbold 1_{\Gamma^{(i)} \Delta\Gamma^{(j)}} (\bm X^{(k)})\right).
\end{equation*}
We now need to plug this estimator into our previous one, resulting in a Nested Monte Carlo (NMC) estimator. To be compatible with the framework of NMC estimator, it is necessary for samples of $\bm X$ to be drawn independently for every pair of indices $(i,j)$ within the outer loop. Let  $(\bm X_{i,j}^{(k)})$ be an iid sample of $\bm X \sim \Udom (\Xdom)$, with $k \in \{1,...,m\}$ and $(i, j) \in \{1,...,n\}^2$ s.t. $i<j$. 
Including the previous estimation of the kernel $k_{set}$, the NMC estimator of $\operatorname{HSIC}(\bm U_A,\Gamma)$ is given by
\begin{equation}
\widehat{\operatorname{HSIC}}^{nest}\left(\bm U_A, \Gamma\right)=\frac{2}{n(n-1)} \sum_{i < j}^{n}\left(K_{A}\left(\bm U_A^{(i)}, \bm U_A^{(j)}\right)-1\right) e^{-\frac{\lambda(\Xdom)}{2\sigma^2}\frac{1}{m}\sum_{k=1}^{m} \mathbbold 1_{\Gamma^{(i)} \Delta \Gamma^{(j)}} ( \bm X_{i,j}^{(k)})}.
\label{real_nested_estimator}
\end{equation}
Note that this estimator is biased like any NMC estimator (see \textcite{rainforth2016pitfalls}).
For each pair $(\Gamma^{(i)},\Gamma^{(j)})$, the previous estimator requires checking whether $\bm X_{i,j}^{(k)} \in \Gamma^{(i)} \Delta\Gamma^{(j)}$ for each $k \in \{1,...,m\}$. Each of these requires checking whether $\bm X_{i,j}^{(k)} \in \Gamma^{(i)}$ and $\bm X_{i,j}^{(k)} \in \Gamma^{(j)}$, corresponding in the example of excursion sets, to compute $g(\bm X_{i,j}^{(k)}, \bm U_A^{(i)})$. This means $n(n-1)m$ evaluations to estimate the index, which is not affordable. To solve this problem, we propose to reuse the same $\bm X^{(k)}$ for each $(i,j)$. By doing so, we only need to test whether $\bm X^{(k)} \in \Gamma^{(i)}$ for each $k$ and $i$, reducing the number of evaluations to $nm$. The estimator is then given by
\begin{equation*}
\widehat{\widehat{\operatorname{HSIC}}}\left(\bm U_A, \Gamma\right)=\frac{2}{n(n-1)} \sum_{i < j}^{n}\left(K_{A}\left(\bm U_A^{(i)}, \bm U_A^{(j)}\right)-1\right) e^{-\frac{\lambda(\Xdom)}{2\sigma^2}\frac{1}{m}\sum_{k=1}^{m} \mathbbold 1_{\Gamma^{(i)} \Delta \Gamma^{(j)}} (\bm X^{(k)})}.
\end{equation*}
By simulating a single $m$-sample of $\bm X$, $\widehat{\widehat{\operatorname{HSIC}}}\left(\bm U_A, \Gamma\right)$ is no longer a classic NMC estimator. Nevertheless, we show that its quadratic risk goes to $0$ and we give an upper bound.
\begin{restatable}{prop}{nestedestimator}
\label{prop nested}
With the previous notations, we have
\begin{align*}
\Esp \left(\widehat{\widehat{\operatorname{HSIC}}}\left(\bm U_A, \Gamma\right) -  \operatorname{HSIC}(\bm U_A, \Gamma) \right)^2 
\leq 2 \left( \frac{2\sigma_1^2}{n(n-1)}+ \frac{4(n-2)\sigma_2^2}{n(n-1)}+  \frac{L^2 \sigma_3^2}{m} \right),
\end{align*}
where
\begin{itemize}
\item $\sigma_1^2=Var \left(\left(K_{A}\left(\bm U_A, {\bm U_A}'\right)-1\right) k_{set}(\Gamma,\Gamma')\right)$,
\item $\begin{aligned}[t] \! \sigma_2^2&
&= Var\left( \Esp \left[ \left(K_{A}\left(\bm U_A, {\bm U_A}'\right)-1\right) k_{set}(\Gamma,\Gamma') | (\Gamma,\bm U_A) \right] \right),\end{aligned}$
\item $\sigma_3^2=\Esp \left[ \left(K_{A}\left(\bm U_A, {\bm U_A}'\right)-1\right)^2\operatorname{Var}\left(  \mathbbold 1 _{\Gamma \Delta \Gamma'}(\bm X) | (\bm U_A,{\bm U_A}',\Gamma,\Gamma')\right)\right],$
\item $L=\frac{\lambda(\Xdom)}{2\sigma^2}$,
\end{itemize}
and where $({\bm U_A}', \Gamma')$ is an independent copy of $({\bm U_A},\Gamma)$.
\end{restatable}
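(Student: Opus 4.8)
The plan is to bound the quadratic risk by splitting the total error into two sources: the error coming from the finite outer sample $n$ (the U-statistic error), and the error coming from the finite inner sample $m$ (the Monte Carlo estimation of each $k_{set}$). I would introduce the intermediate quantity
$$
\widehat{\operatorname{HSIC}}\left(\bm U_A, \Gamma\right)=\frac{2}{n(n-1)} \sum_{i <j}^{n}\left(K_{A}\left(\bm U_A^{(i)}, \bm U_A^{(j)}\right)-1\right) k_{set}\left(\Gamma^{(i)}, \Gamma^{(j)}\right),
$$
which is the exact (unbiased) U-statistic using the true kernel values, and then apply the elementary inequality $(a-c)^2 \le 2(a-b)^2 + 2(b-c)^2$ with $a=\widehat{\widehat{\operatorname{HSIC}}}$, $b=\widehat{\operatorname{HSIC}}$, and $c=\operatorname{HSIC}$. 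Taking expectations gives the factor $2$ in front, with the two resulting terms handled by completely separate arguments. This is the source of the two groups of terms in the bound: $\sigma_1^2,\sigma_2^2$ for the outer error and $L^2\sigma_3^2/m$ for the inner error.

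\textbf{Outer error.} For the term $\Esp(\widehat{\operatorname{HSIC}}-\operatorname{HSIC})^2$, I would note that $\widehat{\operatorname{HSIC}}$ is a genuine U-statistic of order $2$ whose kernel $h((\bm U_A,\Gamma),(\bm U_A',\Gamma'))=(K_A(\bm U_A,\bm U_A')-1)k_{set}(\Gamma,\Gamma')$ has expectation exactly $\operatorname{HSIC}(\bm U_A,\Gamma)$ by the simplified ANOVA expression recalled just after Theorem \ref{anovahsic}. Hence this expectation is just the variance of the U-statistic, and the classical variance formula for a degree-$2$ U-statistic gives a bound of the form $\frac{2}{n(n-1)}\sigma_1^2 + \frac{4(n-2)}{n(n-1)}\sigma_2^2$, where $\sigma_1^2$ is the variance of the full kernel and $\sigma_2^2$ is the variance of its first Hoeffding projection, conditioning on one argument $(\Gamma,\bm U_A)$. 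This matches exactly the first two terms of the claimed bound.

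\textbf{Inner error.} For $\Esp(\widehat{\widehat{\operatorname{HSIC}}}-\widehat{\operatorname{HSIC}})^2$, the difference is driven by replacing each true $k_{set}(\Gamma^{(i)},\Gamma^{(j)})$ by its Monte Carlo surrogate using the shared sample $\bm X^{(1)},\dots,\bm X^{(m)}$. The key analytic tool is that $k_{set}$ is a Lipschitz function (with constant $L=\lambda(\Xdom)/(2\sigma^2)$) of its argument $\frac{1}{m}\sum_k \mathbbold 1_{\Gamma^{(i)}\Delta\Gamma^{(j)}}(\bm X^{(k)})$, since $x\mapsto e^{-Lx}$ has derivative bounded by $L$ on $[0,1]$; applying the mean value theorem pointwise controls the deviation of each exponential by $L$ times the deviation of the empirical mean from its conditional expectation $\Esp[\mathbbold 1_{\Gamma^{(i)}\Delta\Gamma^{(j)}}(\bm X)\mid(\Gamma^{(i)},\Gamma^{(j)})]$. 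I would then expand the squared difference of the two double sums, weight by $(K_A-1)$, and take expectations, so that the Monte Carlo variance $\operatorname{Var}(\mathbbold 1_{\Gamma\Delta\Gamma'}(\bm X)\mid\cdots)/m$ appears, reproducing the factor $L^2\sigma_3^2/m$.

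\textbf{Main obstacle.} The delicate point is the inner error, because the single shared $m$-sample destroys the independence across the $(i,j)$ pairs: the estimator is no longer a clean NMC estimator, so the naive cross terms in the expansion of $(\widehat{\widehat{\operatorname{HSIC}}}-\widehat{\operatorname{HSIC}})^2$ do not vanish. I would handle this by using the Lipschitz bound to pass from the exponential differences to the empirical-mean differences, then exploit that, conditionally on all the $\Gamma^{(i)}$, the summands $\frac{1}{m}\sum_k\mathbbold 1_{\Gamma^{(i)}\Delta\Gamma^{(j)}}(\bm X^{(k)})-\Esp[\cdots]$ are centered averages of the \emph{same} i.i.d. points $\bm X^{(k)}$; a Cauchy--Schwarz (or Jensen) step on the weighted double sum, together with the $1/m$ decay of each individual Monte Carlo variance, absorbs the cross-correlations into the single term $L^2\sigma_3^2/m$ without any dependence on $n$. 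This is precisely where one must be careful not to pick up spurious factors of $n$, and where the stated bound crucially reflects that the inner-sample error does not accumulate over pairs.
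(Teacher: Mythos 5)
Your proposal is correct and follows essentially the same route as the paper: the same split into an exact U-statistic plus a Monte Carlo perturbation via $(a-c)^2\le 2(a-b)^2+2(b-c)^2$, the same classical order-$2$ U-statistic variance formula for the outer term, and the same Lipschitz-plus-Cauchy--Schwarz treatment of the correlated cross terms created by the shared inner sample. In particular, your observation that the $\bigl(\tfrac{n(n-1)}{2}\bigr)^2$ cross terms are each bounded by $\Esp|E_{12}-E_{12,m}|^2\le L^2\sigma_3^2/m$ and exactly cancel the $\tfrac{4}{n^2(n-1)^2}$ prefactor is precisely the paper's argument.
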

The quadratic risk has a rate of $\mathcal O (\frac{1}{n}+\frac{1}{m})$, which tends to say that we should use $n=m$. In the case of the classic NMC estimator (see \textcite{rainforth_nesting_2018}), here without reusing the same samples of $\bm X$, the rate is $\mathcal O (\frac{1}{n}+\frac{1}{m^2})$ as shown in the Appendix \ref{annex:indep}. However, even if the convergence rate is better, the number of evaluations required is $(n-1)$ times larger, which makes our choice to use the same sample of $\bm X$ more efficient. 
Since the previous result is only an upper bound, we can also hope to get closer to a $\mathcal O (\frac{1}{n}+\frac{1}{m^2})$ convergence rate in the application tests. That is, we can expect to have achieved convergence of our estimator without having to take high $n$ and $m$. This will be highlighted in the numerical results presented in the next section. By plugging the estimator $\widehat{\widehat{\operatorname{HSIC}}}$, we denote $\hat{\hat{S}}^{\operatorname{H}_{set}}_{i}$ and $\hat{\hat{S}}^{\operatorname{H}_{set}}_{T_i}$ the estimators of the first-order and total-order HSIC ANOVA indices on sets.


\section{Numerical Experiments}
\label{numerical}

In this part, our goal is to quantify the influence of the inputs $\bm U$ on different excursion sets $\Gamma_g$ (defined in \autoref{excursion_random}). To do this, we will consider three test cases. We first consider an analytically known function $g$ defined on $\Xdom \subset \Real^2$ and $\Udom \subset \Real^2$, borrowed from \textcite{Reda_optim}. In this first toy case, we will also study numerically the quadratic risk bound given in Proposition \ref{prop nested}. The second test case is related to an optimization problem with a stationary harmonic oscillator, from \textcite{cousin_two-step_2022}, on which we want to quantify the impact of some uncertain inputs on the feasible sets. In the last case, a sensitivity analysis is performed in the context of a bi-objective robust optimization of a permanent magnet-assisted synchronous reluctance machine for electrical vehicle application.  For each example, our sensitivity analysis is performed in two steps:
\begin{itemize}
    \item Screening: We compute the p-values associated with the test $\operatorname{HSIC}(U_i,\Gamma_g) =0 $ versus $\operatorname{HSIC}(U_i,\Gamma_g) >0 $. We use permutation based estimation. If the p-value is greater than a risk $\alpha$, the input is negligible, and influential otherwise. We use $\alpha = 0.05$ but this value can be changed depending on the application.
    \item Ranking: We compute the first-order indices $\hat{\hat{S}}^{\operatorname{H}_{set}}_{i}$ and the total-order indices $\hat{\hat{S}}^{\operatorname{H}_{set}}_{T_i}$ of all inputs. The first-order indices $\hat{\hat{S}}^{\operatorname{H}_{set}}_{i}$ are used to rank the inputs. With the total-order indices $\hat{\hat{S}}^{\operatorname{H}_{set}}_{T_i}$ we can quantify the HSIC interaction effects. 
\end{itemize}
The numerical implementation of the indices is done in the R language, using the \texttt{sensitivity} package and in particular the \texttt{sensiHSIC} and \texttt{testHSIC} functions, which allows to compute HSIC-ANOVA indices and p-values. 
\subsection{Excursion sets of a toy function}
\label{toy_fct}
In this part, we will estimate the previous HSIC-ANOVA indices on the excursion set $\Gamma_g$ defined by the following function $g$ from \textcite{Reda_optim},
\begin{equation*}
\forall \bm x,\bm u \in [-5,5]^2 \times [-5,5]^2 ~~ g(\bm x,\bm u)=-x_1^2 + 5x_2 - u_1 +u_2^2-1.
\end{equation*}
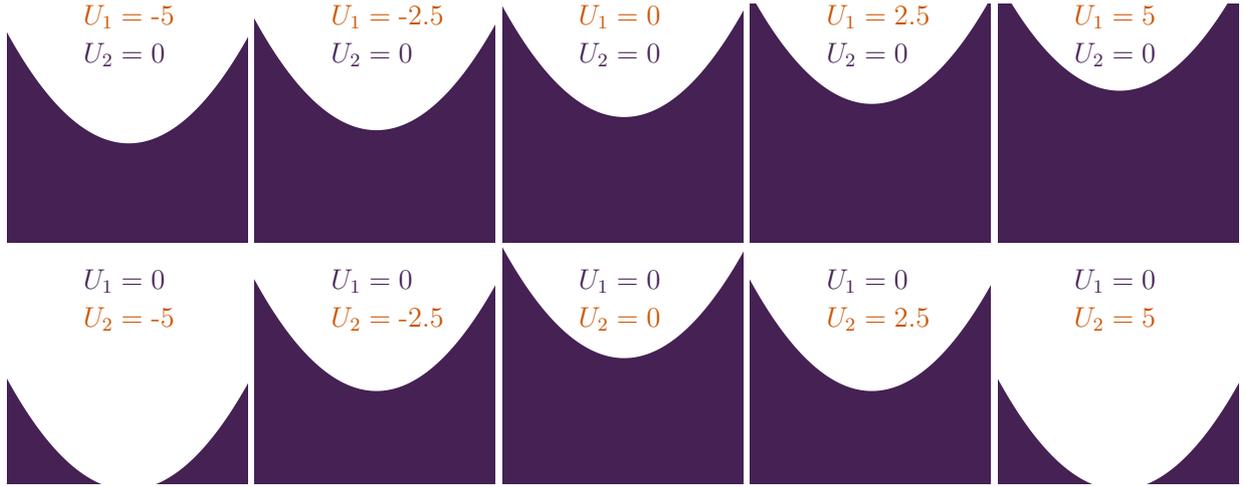
\begin{figure}
\centering
\resizebox{\textwidth}{!}{
\begin{tikzpicture}[x=1pt,y=1pt]
\definecolor{fillColor}{RGB}{255,255,255}
\definecolor{MyOrange}{rgb}{0.8, 0.33, 0}
\path[use as bounding box,fill=fillColor,fill opacity=0.00] (0,0) rectangle (252.94,252.94);
\begin{scope}
\path[clip] (  0.00,  0.00) rectangle (252.94,252.94);
\definecolor{fillColor}{RGB}{69,33,84}

\path[fill=fillColor] (-11.74,245.10) --
	( -8.92,239.52) --
	( -6.10,234.05) --
	( -3.28,228.70) --
	( -0.46,223.46) --
	(  2.36,218.33) --
	(  5.18,213.31) --
	(  8.00,208.42) --
	( 10.82,203.63) --
	( 13.64,198.96) --
	( 16.46,194.40) --
	( 19.28,189.96) --
	( 22.10,185.63) --
	( 24.92,181.41) --
	( 27.74,177.31) --
	( 30.56,173.32) --
	( 33.38,169.45) --
	( 36.20,165.69) --
	( 39.02,162.04) --
	( 41.84,158.51) --
	( 44.66,155.09) --
	( 47.48,151.79) --
	( 50.30,148.60) --
	( 53.12,145.52) --
	( 55.94,142.56) --
	( 58.76,139.71) --
	( 61.58,136.97) --
	( 64.40,134.35) --
	( 67.22,131.85) --
	( 70.04,129.45) --
	( 72.86,127.18) --
	( 75.68,125.01) --
	( 78.50,122.96) --
	( 81.32,121.02) --
	( 84.14,119.20) --
	( 86.96,117.49) --
	( 89.78,115.90) --
	( 92.60,114.41) --
	( 95.42,113.05) --
	( 98.24,111.79) --
	(101.06,110.65) --
	(103.88,109.63) --
	(106.70,108.72) --
	(109.52,107.92) --
	(112.34,107.24) --
	(115.16,106.67) --
	(117.98,106.21) --
	(120.80,105.87) --
	(123.62,105.64) --
	(126.44,105.53) --
	(129.26,105.53) --
	(132.08,105.64) --
	(134.90,105.87) --
	(137.72,106.21) --
	(140.54,106.67) --
	(143.36,107.24) --
	(146.18,107.92) --
	(149.00,108.72) --
	(151.82,109.63) --
	(154.64,110.65) --
	(157.46,111.79) --
	(160.28,113.05) --
	(163.10,114.41) --
	(165.92,115.90) --
	(168.74,117.49) --
	(171.56,119.20) --
	(174.38,121.02) --
	(177.20,122.96) --
	(180.02,125.01) --
	(182.84,127.18) --
	(185.66,129.45) --
	(188.48,131.85) --
	(191.30,134.35) --
	(194.12,136.97) --
	(196.94,139.71) --
	(199.76,142.56) --
	(202.58,145.52) --
	(205.40,148.60) --
	(208.22,151.79) --
	(211.04,155.09) --
	(213.86,158.51) --
	(216.68,162.04) --
	(219.50,165.69) --
	(222.32,169.45) --
	(225.14,173.32) --
	(227.96,177.31) --
	(230.78,181.41) --
	(233.60,185.63) --
	(236.42,189.96) --
	(239.24,194.40) --
	(242.06,198.96) --
	(244.88,203.63) --
	(247.70,208.42) --
	(250.52,213.31) --
	(253.34,218.33) --
	(256.16,223.46) --
	(258.98,228.70) --
	(261.80,234.05) --
	(264.62,239.52) --
	(267.44,245.10) --
	(267.44,-11.74) --
	(264.62,-11.74) --
	(261.80,-11.74) --
	(258.98,-11.74) --
	(256.16,-11.74) --
	(253.34,-11.74) --
	(250.52,-11.74) --
	(247.70,-11.74) --
	(244.88,-11.74) --
	(242.06,-11.74) --
	(239.24,-11.74) --
	(236.42,-11.74) --
	(233.60,-11.74) --
	(230.78,-11.74) --
	(227.96,-11.74) --
	(225.14,-11.74) --
	(222.32,-11.74) --
	(219.50,-11.74) --
	(216.68,-11.74) --
	(213.86,-11.74) --
	(211.04,-11.74) --
	(208.22,-11.74) --
	(205.40,-11.74) --
	(202.58,-11.74) --
	(199.76,-11.74) --
	(196.94,-11.74) --
	(194.12,-11.74) --
	(191.30,-11.74) --
	(188.48,-11.74) --
	(185.66,-11.74) --
	(182.84,-11.74) --
	(180.02,-11.74) --
	(177.20,-11.74) --
	(174.38,-11.74) --
	(171.56,-11.74) --
	(168.74,-11.74) --
	(165.92,-11.74) --
	(163.10,-11.74) --
	(160.28,-11.74) --
	(157.46,-11.74) --
	(154.64,-11.74) --
	(151.82,-11.74) --
	(149.00,-11.74) --
	(146.18,-11.74) --
	(143.36,-11.74) --
	(140.54,-11.74) --
	(137.72,-11.74) --
	(134.90,-11.74) --
	(132.08,-11.74) --
	(129.26,-11.74) --
	(126.44,-11.74) --
	(123.62,-11.74) --
	(120.80,-11.74) --
	(117.98,-11.74) --
	(115.16,-11.74) --
	(112.34,-11.74) --
	(109.52,-11.74) --
	(106.70,-11.74) --
	(103.88,-11.74) --
	(101.06,-11.74) --
	( 98.24,-11.74) --
	( 95.42,-11.74) --
	( 92.60,-11.74) --
	( 89.78,-11.74) --
	( 86.96,-11.74) --
	( 84.14,-11.74) --
	( 81.32,-11.74) --
	( 78.50,-11.74) --
	( 75.68,-11.74) --
	( 72.86,-11.74) --
	( 70.04,-11.74) --
	( 67.22,-11.74) --
	( 64.40,-11.74) --
	( 61.58,-11.74) --
	( 58.76,-11.74) --
	( 55.94,-11.74) --
	( 53.12,-11.74) --
	( 50.30,-11.74) --
	( 47.48,-11.74) --
	( 44.66,-11.74) --
	( 41.84,-11.74) --
	( 39.02,-11.74) --
	( 36.20,-11.74) --
	( 33.38,-11.74) --
	( 30.56,-11.74) --
	( 27.74,-11.74) --
	( 24.92,-11.74) --
	( 22.10,-11.74) --
	( 19.28,-11.74) --
	( 16.46,-11.74) --
	( 13.64,-11.74) --
	( 10.82,-11.74) --
	(  8.00,-11.74) --
	(  5.18,-11.74) --
	(  2.36,-11.74) --
	( -0.46,-11.74) --
	( -3.28,-11.74) --
	( -6.10,-11.74) --
	( -8.92,-11.74) --
	(-11.74,-11.74) --
	cycle;

\path[] (-11.74,245.10) --
	( -8.92,239.52) --
	( -6.10,234.05) --
	( -3.28,228.70) --
	( -0.46,223.46) --
	(  2.36,218.33) --
	(  5.18,213.31) --
	(  8.00,208.42) --
	( 10.82,203.63) --
	( 13.64,198.96) --
	( 16.46,194.40) --
	( 19.28,189.96) --
	( 22.10,185.63) --
	( 24.92,181.41) --
	( 27.74,177.31) --
	( 30.56,173.32) --
	( 33.38,169.45) --
	( 36.20,165.69) --
	( 39.02,162.04) --
	( 41.84,158.51) --
	( 44.66,155.09) --
	( 47.48,151.79) --
	( 50.30,148.60) --
	( 53.12,145.52) --
	( 55.94,142.56) --
	( 58.76,139.71) --
	( 61.58,136.97) --
	( 64.40,134.35) --
	( 67.22,131.85) --
	( 70.04,129.45) --
	( 72.86,127.18) --
	( 75.68,125.01) --
	( 78.50,122.96) --
	( 81.32,121.02) --
	( 84.14,119.20) --
	( 86.96,117.49) --
	( 89.78,115.90) --
	( 92.60,114.41) --
	( 95.42,113.05) --
	( 98.24,111.79) --
	(101.06,110.65) --
	(103.88,109.63) --
	(106.70,108.72) --
	(109.52,107.92) --
	(112.34,107.24) --
	(115.16,106.67) --
	(117.98,106.21) --
	(120.80,105.87) --
	(123.62,105.64) --
	(126.44,105.53) --
	(129.26,105.53) --
	(132.08,105.64) --
	(134.90,105.87) --
	(137.72,106.21) --
	(140.54,106.67) --
	(143.36,107.24) --
	(146.18,107.92) --
	(149.00,108.72) --
	(151.82,109.63) --
	(154.64,110.65) --
	(157.46,111.79) --
	(160.28,113.05) --
	(163.10,114.41) --
	(165.92,115.90) --
	(168.74,117.49) --
	(171.56,119.20) --
	(174.38,121.02) --
	(177.20,122.96) --
	(180.02,125.01) --
	(182.84,127.18) --
	(185.66,129.45) --
	(188.48,131.85) --
	(191.30,134.35) --
	(194.12,136.97) --
	(196.94,139.71) --
	(199.76,142.56) --
	(202.58,145.52) --
	(205.40,148.60) --
	(208.22,151.79) --
	(211.04,155.09) --
	(213.86,158.51) --
	(216.68,162.04) --
	(219.50,165.69) --
	(222.32,169.45) --
	(225.14,173.32) --
	(227.96,177.31) --
	(230.78,181.41) --
	(233.60,185.63) --
	(236.42,189.96) --
	(239.24,194.40) --
	(242.06,198.96) --
	(244.88,203.63) --
	(247.70,208.42) --
	(250.52,213.31) --
	(253.34,218.33) --
	(256.16,223.46) --
	(258.98,228.70) --
	(261.80,234.05) --
	(264.62,239.52) --
	(267.44,245.10);

\path[] (267.44,-11.74) --
	(264.62,-11.74) --
	(261.80,-11.74) --
	(258.98,-11.74) --
	(256.16,-11.74) --
	(253.34,-11.74) --
	(250.52,-11.74) --
	(247.70,-11.74) --
	(244.88,-11.74) --
	(242.06,-11.74) --
	(239.24,-11.74) --
	(236.42,-11.74) --
	(233.60,-11.74) --
	(230.78,-11.74) --
	(227.96,-11.74) --
	(225.14,-11.74) --
	(222.32,-11.74) --
	(219.50,-11.74) --
	(216.68,-11.74) --
	(213.86,-11.74) --
	(211.04,-11.74) --
	(208.22,-11.74) --
	(205.40,-11.74) --
	(202.58,-11.74) --
	(199.76,-11.74) --
	(196.94,-11.74) --
	(194.12,-11.74) --
	(191.30,-11.74) --
	(188.48,-11.74) --
	(185.66,-11.74) --
	(182.84,-11.74) --
	(180.02,-11.74) --
	(177.20,-11.74) --
	(174.38,-11.74) --
	(171.56,-11.74) --
	(168.74,-11.74) --
	(165.92,-11.74) --
	(163.10,-11.74) --
	(160.28,-11.74) --
	(157.46,-11.74) --
	(154.64,-11.74) --
	(151.82,-11.74) --
	(149.00,-11.74) --
	(146.18,-11.74) --
	(143.36,-11.74) --
	(140.54,-11.74) --
	(137.72,-11.74) --
	(134.90,-11.74) --
	(132.08,-11.74) --
	(129.26,-11.74) --
	(126.44,-11.74) --
	(123.62,-11.74) --
	(120.80,-11.74) --
	(117.98,-11.74) --
	(115.16,-11.74) --
	(112.34,-11.74) --
	(109.52,-11.74) --
	(106.70,-11.74) --
	(103.88,-11.74) --
	(101.06,-11.74) --
	( 98.24,-11.74) --
	( 95.42,-11.74) --
	( 92.60,-11.74) --
	( 89.78,-11.74) --
	( 86.96,-11.74) --
	( 84.14,-11.74) --
	( 81.32,-11.74) --
	( 78.50,-11.74) --
	( 75.68,-11.74) --
	( 72.86,-11.74) --
	( 70.04,-11.74) --
	( 67.22,-11.74) --
	( 64.40,-11.74) --
	( 61.58,-11.74) --
	( 58.76,-11.74) --
	( 55.94,-11.74) --
	( 53.12,-11.74) --
	( 50.30,-11.74) --
	( 47.48,-11.74) --
	( 44.66,-11.74) --
	( 41.84,-11.74) --
	( 39.02,-11.74) --
	( 36.20,-11.74) --
	( 33.38,-11.74) --
	( 30.56,-11.74) --
	( 27.74,-11.74) --
	( 24.92,-11.74) --
	( 22.10,-11.74) --
	( 19.28,-11.74) --
	( 16.46,-11.74) --
	( 13.64,-11.74) --
	( 10.82,-11.74) --
	(  8.00,-11.74) --
	(  5.18,-11.74) --
	(  2.36,-11.74) --
	( -0.46,-11.74) --
	( -3.28,-11.74) --
	( -6.10,-11.74) --
	( -8.92,-11.74) --
	(-11.74,-11.74);
\node[text=MyOrange,inner sep=0pt,anchor=west, outer sep=0pt, scale=  2.5] at ( 80.00,240.00) {$U_1=\text{-}5$};
 \node[text=fillColor,inner sep=0pt,anchor=west, outer sep=0pt, scale=  2.5] at ( 80.00,200.00) {$U_2=0$};
\end{scope}
\end{tikzpicture}
\begin{tikzpicture}[x=1pt,y=1pt]
\definecolor{fillColor}{RGB}{255,255,255}
\definecolor{MyOrange}{rgb}{0.8, 0.33, 0}
\path[use as bounding box,fill=fillColor,fill opacity=0.00] (0,0) rectangle (252.94,252.94);
\begin{scope}
\path[clip] (  0.00,  0.00) rectangle (252.94,252.94);
\definecolor{fillColor}{RGB}{69,33,84}

\path[fill=fillColor] (-11.74,259.06) --
	( -8.92,253.48) --
	( -6.10,248.01) --
	( -3.28,242.66) --
	( -0.46,237.41) --
	(  2.36,232.29) --
	(  5.18,227.27) --
	(  8.00,222.37) --
	( 10.82,217.59) --
	( 13.64,212.92) --
	( 16.46,208.36) --
	( 19.28,203.92) --
	( 22.10,199.59) --
	( 24.92,195.37) --
	( 27.74,191.27) --
	( 30.56,187.28) --
	( 33.38,183.41) --
	( 36.20,179.65) --
	( 39.02,176.00) --
	( 41.84,172.47) --
	( 44.66,169.05) --
	( 47.48,165.75) --
	( 50.30,162.56) --
	( 53.12,159.48) --
	( 55.94,156.52) --
	( 58.76,153.67) --
	( 61.58,150.93) --
	( 64.40,148.31) --
	( 67.22,145.81) --
	( 70.04,143.41) --
	( 72.86,141.13) --
	( 75.68,138.97) --
	( 78.50,136.92) --
	( 81.32,134.98) --
	( 84.14,133.16) --
	( 86.96,131.45) --
	( 89.78,129.85) --
	( 92.60,128.37) --
	( 95.42,127.01) --
	( 98.24,125.75) --
	(101.06,124.61) --
	(103.88,123.59) --
	(106.70,122.68) --
	(109.52,121.88) --
	(112.34,121.20) --
	(115.16,120.63) --
	(117.98,120.17) --
	(120.80,119.83) --
	(123.62,119.60) --
	(126.44,119.49) --
	(129.26,119.49) --
	(132.08,119.60) --
	(134.90,119.83) --
	(137.72,120.17) --
	(140.54,120.63) --
	(143.36,121.20) --
	(146.18,121.88) --
	(149.00,122.68) --
	(151.82,123.59) --
	(154.64,124.61) --
	(157.46,125.75) --
	(160.28,127.01) --
	(163.10,128.37) --
	(165.92,129.85) --
	(168.74,131.45) --
	(171.56,133.16) --
	(174.38,134.98) --
	(177.20,136.92) --
	(180.02,138.97) --
	(182.84,141.13) --
	(185.66,143.41) --
	(188.48,145.81) --
	(191.30,148.31) --
	(194.12,150.93) --
	(196.94,153.67) --
	(199.76,156.52) --
	(202.58,159.48) --
	(205.40,162.56) --
	(208.22,165.75) --
	(211.04,169.05) --
	(213.86,172.47) --
	(216.68,176.00) --
	(219.50,179.65) --
	(222.32,183.41) --
	(225.14,187.28) --
	(227.96,191.27) --
	(230.78,195.37) --
	(233.60,199.59) --
	(236.42,203.92) --
	(239.24,208.36) --
	(242.06,212.92) --
	(244.88,217.59) --
	(247.70,222.37) --
	(250.52,227.27) --
	(253.34,232.29) --
	(256.16,237.41) --
	(258.98,242.66) --
	(261.80,248.01) --
	(264.62,253.48) --
	(267.44,259.06) --
	(267.44,-11.74) --
	(264.62,-11.74) --
	(261.80,-11.74) --
	(258.98,-11.74) --
	(256.16,-11.74) --
	(253.34,-11.74) --
	(250.52,-11.74) --
	(247.70,-11.74) --
	(244.88,-11.74) --
	(242.06,-11.74) --
	(239.24,-11.74) --
	(236.42,-11.74) --
	(233.60,-11.74) --
	(230.78,-11.74) --
	(227.96,-11.74) --
	(225.14,-11.74) --
	(222.32,-11.74) --
	(219.50,-11.74) --
	(216.68,-11.74) --
	(213.86,-11.74) --
	(211.04,-11.74) --
	(208.22,-11.74) --
	(205.40,-11.74) --
	(202.58,-11.74) --
	(199.76,-11.74) --
	(196.94,-11.74) --
	(194.12,-11.74) --
	(191.30,-11.74) --
	(188.48,-11.74) --
	(185.66,-11.74) --
	(182.84,-11.74) --
	(180.02,-11.74) --
	(177.20,-11.74) --
	(174.38,-11.74) --
	(171.56,-11.74) --
	(168.74,-11.74) --
	(165.92,-11.74) --
	(163.10,-11.74) --
	(160.28,-11.74) --
	(157.46,-11.74) --
	(154.64,-11.74) --
	(151.82,-11.74) --
	(149.00,-11.74) --
	(146.18,-11.74) --
	(143.36,-11.74) --
	(140.54,-11.74) --
	(137.72,-11.74) --
	(134.90,-11.74) --
	(132.08,-11.74) --
	(129.26,-11.74) --
	(126.44,-11.74) --
	(123.62,-11.74) --
	(120.80,-11.74) --
	(117.98,-11.74) --
	(115.16,-11.74) --
	(112.34,-11.74) --
	(109.52,-11.74) --
	(106.70,-11.74) --
	(103.88,-11.74) --
	(101.06,-11.74) --
	( 98.24,-11.74) --
	( 95.42,-11.74) --
	( 92.60,-11.74) --
	( 89.78,-11.74) --
	( 86.96,-11.74) --
	( 84.14,-11.74) --
	( 81.32,-11.74) --
	( 78.50,-11.74) --
	( 75.68,-11.74) --
	( 72.86,-11.74) --
	( 70.04,-11.74) --
	( 67.22,-11.74) --
	( 64.40,-11.74) --
	( 61.58,-11.74) --
	( 58.76,-11.74) --
	( 55.94,-11.74) --
	( 53.12,-11.74) --
	( 50.30,-11.74) --
	( 47.48,-11.74) --
	( 44.66,-11.74) --
	( 41.84,-11.74) --
	( 39.02,-11.74) --
	( 36.20,-11.74) --
	( 33.38,-11.74) --
	( 30.56,-11.74) --
	( 27.74,-11.74) --
	( 24.92,-11.74) --
	( 22.10,-11.74) --
	( 19.28,-11.74) --
	( 16.46,-11.74) --
	( 13.64,-11.74) --
	( 10.82,-11.74) --
	(  8.00,-11.74) --
	(  5.18,-11.74) --
	(  2.36,-11.74) --
	( -0.46,-11.74) --
	( -3.28,-11.74) --
	( -6.10,-11.74) --
	( -8.92,-11.74) --
	(-11.74,-11.74) --
	cycle;

\path[] (-11.74,259.06) --
	( -8.92,253.48) --
	( -6.10,248.01) --
	( -3.28,242.66) --
	( -0.46,237.41) --
	(  2.36,232.29) --
	(  5.18,227.27) --
	(  8.00,222.37) --
	( 10.82,217.59) --
	( 13.64,212.92) --
	( 16.46,208.36) --
	( 19.28,203.92) --
	( 22.10,199.59) --
	( 24.92,195.37) --
	( 27.74,191.27) --
	( 30.56,187.28) --
	( 33.38,183.41) --
	( 36.20,179.65) --
	( 39.02,176.00) --
	( 41.84,172.47) --
	( 44.66,169.05) --
	( 47.48,165.75) --
	( 50.30,162.56) --
	( 53.12,159.48) --
	( 55.94,156.52) --
	( 58.76,153.67) --
	( 61.58,150.93) --
	( 64.40,148.31) --
	( 67.22,145.81) --
	( 70.04,143.41) --
	( 72.86,141.13) --
	( 75.68,138.97) --
	( 78.50,136.92) --
	( 81.32,134.98) --
	( 84.14,133.16) --
	( 86.96,131.45) --
	( 89.78,129.85) --
	( 92.60,128.37) --
	( 95.42,127.01) --
	( 98.24,125.75) --
	(101.06,124.61) --
	(103.88,123.59) --
	(106.70,122.68) --
	(109.52,121.88) --
	(112.34,121.20) --
	(115.16,120.63) --
	(117.98,120.17) --
	(120.80,119.83) --
	(123.62,119.60) --
	(126.44,119.49) --
	(129.26,119.49) --
	(132.08,119.60) --
	(134.90,119.83) --
	(137.72,120.17) --
	(140.54,120.63) --
	(143.36,121.20) --
	(146.18,121.88) --
	(149.00,122.68) --
	(151.82,123.59) --
	(154.64,124.61) --
	(157.46,125.75) --
	(160.28,127.01) --
	(163.10,128.37) --
	(165.92,129.85) --
	(168.74,131.45) --
	(171.56,133.16) --
	(174.38,134.98) --
	(177.20,136.92) --
	(180.02,138.97) --
	(182.84,141.13) --
	(185.66,143.41) --
	(188.48,145.81) --
	(191.30,148.31) --
	(194.12,150.93) --
	(196.94,153.67) --
	(199.76,156.52) --
	(202.58,159.48) --
	(205.40,162.56) --
	(208.22,165.75) --
	(211.04,169.05) --
	(213.86,172.47) --
	(216.68,176.00) --
	(219.50,179.65) --
	(222.32,183.41) --
	(225.14,187.28) --
	(227.96,191.27) --
	(230.78,195.37) --
	(233.60,199.59) --
	(236.42,203.92) --
	(239.24,208.36) --
	(242.06,212.92) --
	(244.88,217.59) --
	(247.70,222.37) --
	(250.52,227.27) --
	(253.34,232.29) --
	(256.16,237.41) --
	(258.98,242.66) --
	(261.80,248.01) --
	(264.62,253.48) --
	(267.44,259.06);

\path[] (267.44,-11.74) --
	(264.62,-11.74) --
	(261.80,-11.74) --
	(258.98,-11.74) --
	(256.16,-11.74) --
	(253.34,-11.74) --
	(250.52,-11.74) --
	(247.70,-11.74) --
	(244.88,-11.74) --
	(242.06,-11.74) --
	(239.24,-11.74) --
	(236.42,-11.74) --
	(233.60,-11.74) --
	(230.78,-11.74) --
	(227.96,-11.74) --
	(225.14,-11.74) --
	(222.32,-11.74) --
	(219.50,-11.74) --
	(216.68,-11.74) --
	(213.86,-11.74) --
	(211.04,-11.74) --
	(208.22,-11.74) --
	(205.40,-11.74) --
	(202.58,-11.74) --
	(199.76,-11.74) --
	(196.94,-11.74) --
	(194.12,-11.74) --
	(191.30,-11.74) --
	(188.48,-11.74) --
	(185.66,-11.74) --
	(182.84,-11.74) --
	(180.02,-11.74) --
	(177.20,-11.74) --
	(174.38,-11.74) --
	(171.56,-11.74) --
	(168.74,-11.74) --
	(165.92,-11.74) --
	(163.10,-11.74) --
	(160.28,-11.74) --
	(157.46,-11.74) --
	(154.64,-11.74) --
	(151.82,-11.74) --
	(149.00,-11.74) --
	(146.18,-11.74) --
	(143.36,-11.74) --
	(140.54,-11.74) --
	(137.72,-11.74) --
	(134.90,-11.74) --
	(132.08,-11.74) --
	(129.26,-11.74) --
	(126.44,-11.74) --
	(123.62,-11.74) --
	(120.80,-11.74) --
	(117.98,-11.74) --
	(115.16,-11.74) --
	(112.34,-11.74) --
	(109.52,-11.74) --
	(106.70,-11.74) --
	(103.88,-11.74) --
	(101.06,-11.74) --
	( 98.24,-11.74) --
	( 95.42,-11.74) --
	( 92.60,-11.74) --
	( 89.78,-11.74) --
	( 86.96,-11.74) --
	( 84.14,-11.74) --
	( 81.32,-11.74) --
	( 78.50,-11.74) --
	( 75.68,-11.74) --
	( 72.86,-11.74) --
	( 70.04,-11.74) --
	( 67.22,-11.74) --
	( 64.40,-11.74) --
	( 61.58,-11.74) --
	( 58.76,-11.74) --
	( 55.94,-11.74) --
	( 53.12,-11.74) --
	( 50.30,-11.74) --
	( 47.48,-11.74) --
	( 44.66,-11.74) --
	( 41.84,-11.74) --
	( 39.02,-11.74) --
	( 36.20,-11.74) --
	( 33.38,-11.74) --
	( 30.56,-11.74) --
	( 27.74,-11.74) --
	( 24.92,-11.74) --
	( 22.10,-11.74) --
	( 19.28,-11.74) --
	( 16.46,-11.74) --
	( 13.64,-11.74) --
	( 10.82,-11.74) --
	(  8.00,-11.74) --
	(  5.18,-11.74) --
	(  2.36,-11.74) --
	( -0.46,-11.74) --
	( -3.28,-11.74) --
	( -6.10,-11.74) --
	( -8.92,-11.74) --
	(-11.74,-11.74);
  \node[text=MyOrange,inner sep=0pt,anchor=west, outer sep=0pt, scale=  2.5] at ( 80.00,240.00) {$U_1=\text{-}2.5$};
 \node[text=fillColor,inner sep=0pt,anchor=west, outer sep=0pt, scale=  2.5] at ( 80.00,200.00) {$U_2=0$};
\end{scope}
\end{tikzpicture}
\begin{tikzpicture}[x=1pt,y=1pt]
\definecolor{fillColor}{RGB}{255,255,255}
\definecolor{MyOrange}{rgb}{0.8, 0.33, 0}
\path[use as bounding box,fill=fillColor,fill opacity=0.00] (0,0) rectangle (252.94,252.94);
\begin{scope}
\path[clip] (  0.00,  0.00) rectangle (252.94,252.94);
\definecolor{fillColor}{RGB}{69,33,84}

\path[fill=fillColor] (-11.74,267.44) --
	( -8.92,267.44) --
	( -6.10,261.97) --
	( -3.28,256.61) --
	( -0.46,251.37) --
	(  2.36,246.25) --
	(  5.18,241.23) --
	(  8.00,236.33) --
	( 10.82,231.55) --
	( 13.64,226.88) --
	( 16.46,222.32) --
	( 19.28,217.88) --
	( 22.10,213.55) --
	( 24.92,209.33) --
	( 27.74,205.23) --
	( 30.56,201.24) --
	( 33.38,197.37) --
	( 36.20,193.61) --
	( 39.02,189.96) --
	( 41.84,186.43) --
	( 44.66,183.01) --
	( 47.48,179.71) --
	( 50.30,176.51) --
	( 53.12,173.44) --
	( 55.94,170.48) --
	( 58.76,167.63) --
	( 61.58,164.89) --
	( 64.40,162.27) --
	( 67.22,159.77) --
	( 70.04,157.37) --
	( 72.86,155.09) --
	( 75.68,152.93) --
	( 78.50,150.88) --
	( 81.32,148.94) --
	( 84.14,147.12) --
	( 86.96,145.41) --
	( 89.78,143.81) --
	( 92.60,142.33) --
	( 95.42,140.97) --
	( 98.24,139.71) --
	(101.06,138.57) --
	(103.88,137.55) --
	(106.70,136.64) --
	(109.52,135.84) --
	(112.34,135.15) --
	(115.16,134.58) --
	(117.98,134.13) --
	(120.80,133.79) --
	(123.62,133.56) --
	(126.44,133.45) --
	(129.26,133.45) --
	(132.08,133.56) --
	(134.90,133.79) --
	(137.72,134.13) --
	(140.54,134.58) --
	(143.36,135.15) --
	(146.18,135.84) --
	(149.00,136.64) --
	(151.82,137.55) --
	(154.64,138.57) --
	(157.46,139.71) --
	(160.28,140.97) --
	(163.10,142.33) --
	(165.92,143.81) --
	(168.74,145.41) --
	(171.56,147.12) --
	(174.38,148.94) --
	(177.20,150.88) --
	(180.02,152.93) --
	(182.84,155.09) --
	(185.66,157.37) --
	(188.48,159.77) --
	(191.30,162.27) --
	(194.12,164.89) --
	(196.94,167.63) --
	(199.76,170.48) --
	(202.58,173.44) --
	(205.40,176.51) --
	(208.22,179.71) --
	(211.04,183.01) --
	(213.86,186.43) --
	(216.68,189.96) --
	(219.50,193.61) --
	(222.32,197.37) --
	(225.14,201.24) --
	(227.96,205.23) --
	(230.78,209.33) --
	(233.60,213.55) --
	(236.42,217.88) --
	(239.24,222.32) --
	(242.06,226.88) --
	(244.88,231.55) --
	(247.70,236.33) --
	(250.52,241.23) --
	(253.34,246.25) --
	(256.16,251.37) --
	(258.98,256.61) --
	(261.80,261.97) --
	(264.62,267.44) --
	(267.44,267.44) --
	(267.44,-11.74) --
	(264.62,-11.74) --
	(261.80,-11.74) --
	(258.98,-11.74) --
	(256.16,-11.74) --
	(253.34,-11.74) --
	(250.52,-11.74) --
	(247.70,-11.74) --
	(244.88,-11.74) --
	(242.06,-11.74) --
	(239.24,-11.74) --
	(236.42,-11.74) --
	(233.60,-11.74) --
	(230.78,-11.74) --
	(227.96,-11.74) --
	(225.14,-11.74) --
	(222.32,-11.74) --
	(219.50,-11.74) --
	(216.68,-11.74) --
	(213.86,-11.74) --
	(211.04,-11.74) --
	(208.22,-11.74) --
	(205.40,-11.74) --
	(202.58,-11.74) --
	(199.76,-11.74) --
	(196.94,-11.74) --
	(194.12,-11.74) --
	(191.30,-11.74) --
	(188.48,-11.74) --
	(185.66,-11.74) --
	(182.84,-11.74) --
	(180.02,-11.74) --
	(177.20,-11.74) --
	(174.38,-11.74) --
	(171.56,-11.74) --
	(168.74,-11.74) --
	(165.92,-11.74) --
	(163.10,-11.74) --
	(160.28,-11.74) --
	(157.46,-11.74) --
	(154.64,-11.74) --
	(151.82,-11.74) --
	(149.00,-11.74) --
	(146.18,-11.74) --
	(143.36,-11.74) --
	(140.54,-11.74) --
	(137.72,-11.74) --
	(134.90,-11.74) --
	(132.08,-11.74) --
	(129.26,-11.74) --
	(126.44,-11.74) --
	(123.62,-11.74) --
	(120.80,-11.74) --
	(117.98,-11.74) --
	(115.16,-11.74) --
	(112.34,-11.74) --
	(109.52,-11.74) --
	(106.70,-11.74) --
	(103.88,-11.74) --
	(101.06,-11.74) --
	( 98.24,-11.74) --
	( 95.42,-11.74) --
	( 92.60,-11.74) --
	( 89.78,-11.74) --
	( 86.96,-11.74) --
	( 84.14,-11.74) --
	( 81.32,-11.74) --
	( 78.50,-11.74) --
	( 75.68,-11.74) --
	( 72.86,-11.74) --
	( 70.04,-11.74) --
	( 67.22,-11.74) --
	( 64.40,-11.74) --
	( 61.58,-11.74) --
	( 58.76,-11.74) --
	( 55.94,-11.74) --
	( 53.12,-11.74) --
	( 50.30,-11.74) --
	( 47.48,-11.74) --
	( 44.66,-11.74) --
	( 41.84,-11.74) --
	( 39.02,-11.74) --
	( 36.20,-11.74) --
	( 33.38,-11.74) --
	( 30.56,-11.74) --
	( 27.74,-11.74) --
	( 24.92,-11.74) --
	( 22.10,-11.74) --
	( 19.28,-11.74) --
	( 16.46,-11.74) --
	( 13.64,-11.74) --
	( 10.82,-11.74) --
	(  8.00,-11.74) --
	(  5.18,-11.74) --
	(  2.36,-11.74) --
	( -0.46,-11.74) --
	( -3.28,-11.74) --
	( -6.10,-11.74) --
	( -8.92,-11.74) --
	(-11.74,-11.74) --
	cycle;

\path[] (-11.74,267.44) --
	( -8.92,267.44) --
	( -6.10,261.97) --
	( -3.28,256.61) --
	( -0.46,251.37) --
	(  2.36,246.25) --
	(  5.18,241.23) --
	(  8.00,236.33) --
	( 10.82,231.55) --
	( 13.64,226.88) --
	( 16.46,222.32) --
	( 19.28,217.88) --
	( 22.10,213.55) --
	( 24.92,209.33) --
	( 27.74,205.23) --
	( 30.56,201.24) --
	( 33.38,197.37) --
	( 36.20,193.61) --
	( 39.02,189.96) --
	( 41.84,186.43) --
	( 44.66,183.01) --
	( 47.48,179.71) --
	( 50.30,176.51) --
	( 53.12,173.44) --
	( 55.94,170.48) --
	( 58.76,167.63) --
	( 61.58,164.89) --
	( 64.40,162.27) --
	( 67.22,159.77) --
	( 70.04,157.37) --
	( 72.86,155.09) --
	( 75.68,152.93) --
	( 78.50,150.88) --
	( 81.32,148.94) --
	( 84.14,147.12) --
	( 86.96,145.41) --
	( 89.78,143.81) --
	( 92.60,142.33) --
	( 95.42,140.97) --
	( 98.24,139.71) --
	(101.06,138.57) --
	(103.88,137.55) --
	(106.70,136.64) --
	(109.52,135.84) --
	(112.34,135.15) --
	(115.16,134.58) --
	(117.98,134.13) --
	(120.80,133.79) --
	(123.62,133.56) --
	(126.44,133.45) --
	(129.26,133.45) --
	(132.08,133.56) --
	(134.90,133.79) --
	(137.72,134.13) --
	(140.54,134.58) --
	(143.36,135.15) --
	(146.18,135.84) --
	(149.00,136.64) --
	(151.82,137.55) --
	(154.64,138.57) --
	(157.46,139.71) --
	(160.28,140.97) --
	(163.10,142.33) --
	(165.92,143.81) --
	(168.74,145.41) --
	(171.56,147.12) --
	(174.38,148.94) --
	(177.20,150.88) --
	(180.02,152.93) --
	(182.84,155.09) --
	(185.66,157.37) --
	(188.48,159.77) --
	(191.30,162.27) --
	(194.12,164.89) --
	(196.94,167.63) --
	(199.76,170.48) --
	(202.58,173.44) --
	(205.40,176.51) --
	(208.22,179.71) --
	(211.04,183.01) --
	(213.86,186.43) --
	(216.68,189.96) --
	(219.50,193.61) --
	(222.32,197.37) --
	(225.14,201.24) --
	(227.96,205.23) --
	(230.78,209.33) --
	(233.60,213.55) --
	(236.42,217.88) --
	(239.24,222.32) --
	(242.06,226.88) --
	(244.88,231.55) --
	(247.70,236.33) --
	(250.52,241.23) --
	(253.34,246.25) --
	(256.16,251.37) --
	(258.98,256.61) --
	(261.80,261.97) --
	(264.62,267.44) --
	(267.44,267.44);

\path[] (267.44,-11.74) --
	(264.62,-11.74) --
	(261.80,-11.74) --
	(258.98,-11.74) --
	(256.16,-11.74) --
	(253.34,-11.74) --
	(250.52,-11.74) --
	(247.70,-11.74) --
	(244.88,-11.74) --
	(242.06,-11.74) --
	(239.24,-11.74) --
	(236.42,-11.74) --
	(233.60,-11.74) --
	(230.78,-11.74) --
	(227.96,-11.74) --
	(225.14,-11.74) --
	(222.32,-11.74) --
	(219.50,-11.74) --
	(216.68,-11.74) --
	(213.86,-11.74) --
	(211.04,-11.74) --
	(208.22,-11.74) --
	(205.40,-11.74) --
	(202.58,-11.74) --
	(199.76,-11.74) --
	(196.94,-11.74) --
	(194.12,-11.74) --
	(191.30,-11.74) --
	(188.48,-11.74) --
	(185.66,-11.74) --
	(182.84,-11.74) --
	(180.02,-11.74) --
	(177.20,-11.74) --
	(174.38,-11.74) --
	(171.56,-11.74) --
	(168.74,-11.74) --
	(165.92,-11.74) --
	(163.10,-11.74) --
	(160.28,-11.74) --
	(157.46,-11.74) --
	(154.64,-11.74) --
	(151.82,-11.74) --
	(149.00,-11.74) --
	(146.18,-11.74) --
	(143.36,-11.74) --
	(140.54,-11.74) --
	(137.72,-11.74) --
	(134.90,-11.74) --
	(132.08,-11.74) --
	(129.26,-11.74) --
	(126.44,-11.74) --
	(123.62,-11.74) --
	(120.80,-11.74) --
	(117.98,-11.74) --
	(115.16,-11.74) --
	(112.34,-11.74) --
	(109.52,-11.74) --
	(106.70,-11.74) --
	(103.88,-11.74) --
	(101.06,-11.74) --
	( 98.24,-11.74) --
	( 95.42,-11.74) --
	( 92.60,-11.74) --
	( 89.78,-11.74) --
	( 86.96,-11.74) --
	( 84.14,-11.74) --
	( 81.32,-11.74) --
	( 78.50,-11.74) --
	( 75.68,-11.74) --
	( 72.86,-11.74) --
	( 70.04,-11.74) --
	( 67.22,-11.74) --
	( 64.40,-11.74) --
	( 61.58,-11.74) --
	( 58.76,-11.74) --
	( 55.94,-11.74) --
	( 53.12,-11.74) --
	( 50.30,-11.74) --
	( 47.48,-11.74) --
	( 44.66,-11.74) --
	( 41.84,-11.74) --
	( 39.02,-11.74) --
	( 36.20,-11.74) --
	( 33.38,-11.74) --
	( 30.56,-11.74) --
	( 27.74,-11.74) --
	( 24.92,-11.74) --
	( 22.10,-11.74) --
	( 19.28,-11.74) --
	( 16.46,-11.74) --
	( 13.64,-11.74) --
	( 10.82,-11.74) --
	(  8.00,-11.74) --
	(  5.18,-11.74) --
	(  2.36,-11.74) --
	( -0.46,-11.74) --
	( -3.28,-11.74) --
	( -6.10,-11.74) --
	( -8.92,-11.74) --
	(-11.74,-11.74);
  \node[text=MyOrange,inner sep=0pt,anchor=west, outer sep=0pt, scale=  2.5] at ( 80.00,240.00) {$U_1=0$};
 \node[text=fillColor,inner sep=0pt,anchor=west, outer sep=0pt, scale=  2.5] at ( 80.00,200.00) {$U_2=0$};
\end{scope}
\end{tikzpicture}
\begin{tikzpicture}[x=1pt,y=1pt]
\definecolor{fillColor}{RGB}{255,255,255}
\definecolor{MyOrange}{rgb}{0.8, 0.33, 0}
\path[use as bounding box,fill=fillColor,fill opacity=0.00] (0,0) rectangle (252.94,252.94);
\begin{scope}
\path[clip] (  0.00,  0.00) rectangle (252.94,252.94);
\definecolor{fillColor}{RGB}{69,33,84}

\path[fill=fillColor] (-11.74,267.44) --
	( -8.92,267.44) --
	( -6.10,267.44) --
	( -3.28,267.44) --
	( -0.46,265.33) --
	(  2.36,260.21) --
	(  5.18,255.19) --
	(  8.00,250.29) --
	( 10.82,245.51) --
	( 13.64,240.84) --
	( 16.46,236.28) --
	( 19.28,231.83) --
	( 22.10,227.50) --
	( 24.92,223.29) --
	( 27.74,219.19) --
	( 30.56,215.20) --
	( 33.38,211.33) --
	( 36.20,207.56) --
	( 39.02,203.92) --
	( 41.84,200.39) --
	( 44.66,196.97) --
	( 47.48,193.66) --
	( 50.30,190.47) --
	( 53.12,187.40) --
	( 55.94,184.44) --
	( 58.76,181.59) --
	( 61.58,178.85) --
	( 64.40,176.23) --
	( 67.22,173.72) --
	( 70.04,171.33) --
	( 72.86,169.05) --
	( 75.68,166.89) --
	( 78.50,164.84) --
	( 81.32,162.90) --
	( 84.14,161.08) --
	( 86.96,159.37) --
	( 89.78,157.77) --
	( 92.60,156.29) --
	( 95.42,154.92) --
	( 98.24,153.67) --
	(101.06,152.53) --
	(103.88,151.51) --
	(106.70,150.59) --
	(109.52,149.80) --
	(112.34,149.11) --
	(115.16,148.54) --
	(117.98,148.09) --
	(120.80,147.75) --
	(123.62,147.52) --
	(126.44,147.40) --
	(129.26,147.40) --
	(132.08,147.52) --
	(134.90,147.75) --
	(137.72,148.09) --
	(140.54,148.54) --
	(143.36,149.11) --
	(146.18,149.80) --
	(149.00,150.59) --
	(151.82,151.51) --
	(154.64,152.53) --
	(157.46,153.67) --
	(160.28,154.92) --
	(163.10,156.29) --
	(165.92,157.77) --
	(168.74,159.37) --
	(171.56,161.08) --
	(174.38,162.90) --
	(177.20,164.84) --
	(180.02,166.89) --
	(182.84,169.05) --
	(185.66,171.33) --
	(188.48,173.72) --
	(191.30,176.23) --
	(194.12,178.85) --
	(196.94,181.59) --
	(199.76,184.44) --
	(202.58,187.40) --
	(205.40,190.47) --
	(208.22,193.66) --
	(211.04,196.97) --
	(213.86,200.39) --
	(216.68,203.92) --
	(219.50,207.56) --
	(222.32,211.33) --
	(225.14,215.20) --
	(227.96,219.19) --
	(230.78,223.29) --
	(233.60,227.50) --
	(236.42,231.83) --
	(239.24,236.28) --
	(242.06,240.84) --
	(244.88,245.51) --
	(247.70,250.29) --
	(250.52,255.19) --
	(253.34,260.21) --
	(256.16,265.33) --
	(258.98,267.44) --
	(261.80,267.44) --
	(264.62,267.44) --
	(267.44,267.44) --
	(267.44,-11.74) --
	(264.62,-11.74) --
	(261.80,-11.74) --
	(258.98,-11.74) --
	(256.16,-11.74) --
	(253.34,-11.74) --
	(250.52,-11.74) --
	(247.70,-11.74) --
	(244.88,-11.74) --
	(242.06,-11.74) --
	(239.24,-11.74) --
	(236.42,-11.74) --
	(233.60,-11.74) --
	(230.78,-11.74) --
	(227.96,-11.74) --
	(225.14,-11.74) --
	(222.32,-11.74) --
	(219.50,-11.74) --
	(216.68,-11.74) --
	(213.86,-11.74) --
	(211.04,-11.74) --
	(208.22,-11.74) --
	(205.40,-11.74) --
	(202.58,-11.74) --
	(199.76,-11.74) --
	(196.94,-11.74) --
	(194.12,-11.74) --
	(191.30,-11.74) --
	(188.48,-11.74) --
	(185.66,-11.74) --
	(182.84,-11.74) --
	(180.02,-11.74) --
	(177.20,-11.74) --
	(174.38,-11.74) --
	(171.56,-11.74) --
	(168.74,-11.74) --
	(165.92,-11.74) --
	(163.10,-11.74) --
	(160.28,-11.74) --
	(157.46,-11.74) --
	(154.64,-11.74) --
	(151.82,-11.74) --
	(149.00,-11.74) --
	(146.18,-11.74) --
	(143.36,-11.74) --
	(140.54,-11.74) --
	(137.72,-11.74) --
	(134.90,-11.74) --
	(132.08,-11.74) --
	(129.26,-11.74) --
	(126.44,-11.74) --
	(123.62,-11.74) --
	(120.80,-11.74) --
	(117.98,-11.74) --
	(115.16,-11.74) --
	(112.34,-11.74) --
	(109.52,-11.74) --
	(106.70,-11.74) --
	(103.88,-11.74) --
	(101.06,-11.74) --
	( 98.24,-11.74) --
	( 95.42,-11.74) --
	( 92.60,-11.74) --
	( 89.78,-11.74) --
	( 86.96,-11.74) --
	( 84.14,-11.74) --
	( 81.32,-11.74) --
	( 78.50,-11.74) --
	( 75.68,-11.74) --
	( 72.86,-11.74) --
	( 70.04,-11.74) --
	( 67.22,-11.74) --
	( 64.40,-11.74) --
	( 61.58,-11.74) --
	( 58.76,-11.74) --
	( 55.94,-11.74) --
	( 53.12,-11.74) --
	( 50.30,-11.74) --
	( 47.48,-11.74) --
	( 44.66,-11.74) --
	( 41.84,-11.74) --
	( 39.02,-11.74) --
	( 36.20,-11.74) --
	( 33.38,-11.74) --
	( 30.56,-11.74) --
	( 27.74,-11.74) --
	( 24.92,-11.74) --
	( 22.10,-11.74) --
	( 19.28,-11.74) --
	( 16.46,-11.74) --
	( 13.64,-11.74) --
	( 10.82,-11.74) --
	(  8.00,-11.74) --
	(  5.18,-11.74) --
	(  2.36,-11.74) --
	( -0.46,-11.74) --
	( -3.28,-11.74) --
	( -6.10,-11.74) --
	( -8.92,-11.74) --
	(-11.74,-11.74) --
	cycle;

\path[] (-11.74,267.44) --
	( -8.92,267.44) --
	( -6.10,267.44) --
	( -3.28,267.44) --
	( -0.46,265.33) --
	(  2.36,260.21) --
	(  5.18,255.19) --
	(  8.00,250.29) --
	( 10.82,245.51) --
	( 13.64,240.84) --
	( 16.46,236.28) --
	( 19.28,231.83) --
	( 22.10,227.50) --
	( 24.92,223.29) --
	( 27.74,219.19) --
	( 30.56,215.20) --
	( 33.38,211.33) --
	( 36.20,207.56) --
	( 39.02,203.92) --
	( 41.84,200.39) --
	( 44.66,196.97) --
	( 47.48,193.66) --
	( 50.30,190.47) --
	( 53.12,187.40) --
	( 55.94,184.44) --
	( 58.76,181.59) --
	( 61.58,178.85) --
	( 64.40,176.23) --
	( 67.22,173.72) --
	( 70.04,171.33) --
	( 72.86,169.05) --
	( 75.68,166.89) --
	( 78.50,164.84) --
	( 81.32,162.90) --
	( 84.14,161.08) --
	( 86.96,159.37) --
	( 89.78,157.77) --
	( 92.60,156.29) --
	( 95.42,154.92) --
	( 98.24,153.67) --
	(101.06,152.53) --
	(103.88,151.51) --
	(106.70,150.59) --
	(109.52,149.80) --
	(112.34,149.11) --
	(115.16,148.54) --
	(117.98,148.09) --
	(120.80,147.75) --
	(123.62,147.52) --
	(126.44,147.40) --
	(129.26,147.40) --
	(132.08,147.52) --
	(134.90,147.75) --
	(137.72,148.09) --
	(140.54,148.54) --
	(143.36,149.11) --
	(146.18,149.80) --
	(149.00,150.59) --
	(151.82,151.51) --
	(154.64,152.53) --
	(157.46,153.67) --
	(160.28,154.92) --
	(163.10,156.29) --
	(165.92,157.77) --
	(168.74,159.37) --
	(171.56,161.08) --
	(174.38,162.90) --
	(177.20,164.84) --
	(180.02,166.89) --
	(182.84,169.05) --
	(185.66,171.33) --
	(188.48,173.72) --
	(191.30,176.23) --
	(194.12,178.85) --
	(196.94,181.59) --
	(199.76,184.44) --
	(202.58,187.40) --
	(205.40,190.47) --
	(208.22,193.66) --
	(211.04,196.97) --
	(213.86,200.39) --
	(216.68,203.92) --
	(219.50,207.56) --
	(222.32,211.33) --
	(225.14,215.20) --
	(227.96,219.19) --
	(230.78,223.29) --
	(233.60,227.50) --
	(236.42,231.83) --
	(239.24,236.28) --
	(242.06,240.84) --
	(244.88,245.51) --
	(247.70,250.29) --
	(250.52,255.19) --
	(253.34,260.21) --
	(256.16,265.33) --
	(258.98,267.44) --
	(261.80,267.44) --
	(264.62,267.44) --
	(267.44,267.44);

\path[] (267.44,-11.74) --
	(264.62,-11.74) --
	(261.80,-11.74) --
	(258.98,-11.74) --
	(256.16,-11.74) --
	(253.34,-11.74) --
	(250.52,-11.74) --
	(247.70,-11.74) --
	(244.88,-11.74) --
	(242.06,-11.74) --
	(239.24,-11.74) --
	(236.42,-11.74) --
	(233.60,-11.74) --
	(230.78,-11.74) --
	(227.96,-11.74) --
	(225.14,-11.74) --
	(222.32,-11.74) --
	(219.50,-11.74) --
	(216.68,-11.74) --
	(213.86,-11.74) --
	(211.04,-11.74) --
	(208.22,-11.74) --
	(205.40,-11.74) --
	(202.58,-11.74) --
	(199.76,-11.74) --
	(196.94,-11.74) --
	(194.12,-11.74) --
	(191.30,-11.74) --
	(188.48,-11.74) --
	(185.66,-11.74) --
	(182.84,-11.74) --
	(180.02,-11.74) --
	(177.20,-11.74) --
	(174.38,-11.74) --
	(171.56,-11.74) --
	(168.74,-11.74) --
	(165.92,-11.74) --
	(163.10,-11.74) --
	(160.28,-11.74) --
	(157.46,-11.74) --
	(154.64,-11.74) --
	(151.82,-11.74) --
	(149.00,-11.74) --
	(146.18,-11.74) --
	(143.36,-11.74) --
	(140.54,-11.74) --
	(137.72,-11.74) --
	(134.90,-11.74) --
	(132.08,-11.74) --
	(129.26,-11.74) --
	(126.44,-11.74) --
	(123.62,-11.74) --
	(120.80,-11.74) --
	(117.98,-11.74) --
	(115.16,-11.74) --
	(112.34,-11.74) --
	(109.52,-11.74) --
	(106.70,-11.74) --
	(103.88,-11.74) --
	(101.06,-11.74) --
	( 98.24,-11.74) --
	( 95.42,-11.74) --
	( 92.60,-11.74) --
	( 89.78,-11.74) --
	( 86.96,-11.74) --
	( 84.14,-11.74) --
	( 81.32,-11.74) --
	( 78.50,-11.74) --
	( 75.68,-11.74) --
	( 72.86,-11.74) --
	( 70.04,-11.74) --
	( 67.22,-11.74) --
	( 64.40,-11.74) --
	( 61.58,-11.74) --
	( 58.76,-11.74) --
	( 55.94,-11.74) --
	( 53.12,-11.74) --
	( 50.30,-11.74) --
	( 47.48,-11.74) --
	( 44.66,-11.74) --
	( 41.84,-11.74) --
	( 39.02,-11.74) --
	( 36.20,-11.74) --
	( 33.38,-11.74) --
	( 30.56,-11.74) --
	( 27.74,-11.74) --
	( 24.92,-11.74) --
	( 22.10,-11.74) --
	( 19.28,-11.74) --
	( 16.46,-11.74) --
	( 13.64,-11.74) --
	( 10.82,-11.74) --
	(  8.00,-11.74) --
	(  5.18,-11.74) --
	(  2.36,-11.74) --
	( -0.46,-11.74) --
	( -3.28,-11.74) --
	( -6.10,-11.74) --
	( -8.92,-11.74) --
	(-11.74,-11.74);
  \node[text=MyOrange,inner sep=0pt,anchor=west, outer sep=0pt, scale=  2.5] at ( 80.00,240.00) {$U_1=2.5$};
 \node[text=fillColor,inner sep=0pt,anchor=west, outer sep=0pt, scale=  2.5] at ( 80.00,200.00) {$U_2=0$};
\end{scope}
\end{tikzpicture}
\begin{tikzpicture}[x=1pt,y=1pt]
\definecolor{fillColor}{RGB}{255,255,255}
\definecolor{MyOrange}{rgb}{0.8, 0.33, 0}
\path[use as bounding box,fill=fillColor,fill opacity=0.00] (0,0) rectangle (252.94,252.94);
\begin{scope}
\path[clip] (  0.00,  0.00) rectangle (252.94,252.94);
\definecolor{fillColor}{RGB}{69,33,84}

\path[fill=fillColor] (-11.74,267.44) --
	( -8.92,267.44) --
	( -6.10,267.44) --
	( -3.28,267.44) --
	( -0.46,267.44) --
	(  2.36,267.44) --
	(  5.18,267.44) --
	(  8.00,264.25) --
	( 10.82,259.47) --
	( 13.64,254.79) --
	( 16.46,250.24) --
	( 19.28,245.79) --
	( 22.10,241.46) --
	( 24.92,237.25) --
	( 27.74,233.15) --
	( 30.56,229.16) --
	( 33.38,225.28) --
	( 36.20,221.52) --
	( 39.02,217.88) --
	( 41.84,214.35) --
	( 44.66,210.93) --
	( 47.48,207.62) --
	( 50.30,204.43) --
	( 53.12,201.36) --
	( 55.94,198.39) --
	( 58.76,195.55) --
	( 61.58,192.81) --
	( 64.40,190.19) --
	( 67.22,187.68) --
	( 70.04,185.29) --
	( 72.86,183.01) --
	( 75.68,180.85) --
	( 78.50,178.80) --
	( 81.32,176.86) --
	( 84.14,175.04) --
	( 86.96,173.33) --
	( 89.78,171.73) --
	( 92.60,170.25) --
	( 95.42,168.88) --
	( 98.24,167.63) --
	(101.06,166.49) --
	(103.88,165.47) --
	(106.70,164.55) --
	(109.52,163.76) --
	(112.34,163.07) --
	(115.16,162.50) --
	(117.98,162.05) --
	(120.80,161.71) --
	(123.62,161.48) --
	(126.44,161.36) --
	(129.26,161.36) --
	(132.08,161.48) --
	(134.90,161.71) --
	(137.72,162.05) --
	(140.54,162.50) --
	(143.36,163.07) --
	(146.18,163.76) --
	(149.00,164.55) --
	(151.82,165.47) --
	(154.64,166.49) --
	(157.46,167.63) --
	(160.28,168.88) --
	(163.10,170.25) --
	(165.92,171.73) --
	(168.74,173.33) --
	(171.56,175.04) --
	(174.38,176.86) --
	(177.20,178.80) --
	(180.02,180.85) --
	(182.84,183.01) --
	(185.66,185.29) --
	(188.48,187.68) --
	(191.30,190.19) --
	(194.12,192.81) --
	(196.94,195.55) --
	(199.76,198.39) --
	(202.58,201.36) --
	(205.40,204.43) --
	(208.22,207.62) --
	(211.04,210.93) --
	(213.86,214.35) --
	(216.68,217.88) --
	(219.50,221.52) --
	(222.32,225.28) --
	(225.14,229.16) --
	(227.96,233.15) --
	(230.78,237.25) --
	(233.60,241.46) --
	(236.42,245.79) --
	(239.24,250.24) --
	(242.06,254.79) --
	(244.88,259.47) --
	(247.70,264.25) --
	(250.52,267.44) --
	(253.34,267.44) --
	(256.16,267.44) --
	(258.98,267.44) --
	(261.80,267.44) --
	(264.62,267.44) --
	(267.44,267.44) --
	(267.44,-11.74) --
	(264.62,-11.74) --
	(261.80,-11.74) --
	(258.98,-11.74) --
	(256.16,-11.74) --
	(253.34,-11.74) --
	(250.52,-11.74) --
	(247.70,-11.74) --
	(244.88,-11.74) --
	(242.06,-11.74) --
	(239.24,-11.74) --
	(236.42,-11.74) --
	(233.60,-11.74) --
	(230.78,-11.74) --
	(227.96,-11.74) --
	(225.14,-11.74) --
	(222.32,-11.74) --
	(219.50,-11.74) --
	(216.68,-11.74) --
	(213.86,-11.74) --
	(211.04,-11.74) --
	(208.22,-11.74) --
	(205.40,-11.74) --
	(202.58,-11.74) --
	(199.76,-11.74) --
	(196.94,-11.74) --
	(194.12,-11.74) --
	(191.30,-11.74) --
	(188.48,-11.74) --
	(185.66,-11.74) --
	(182.84,-11.74) --
	(180.02,-11.74) --
	(177.20,-11.74) --
	(174.38,-11.74) --
	(171.56,-11.74) --
	(168.74,-11.74) --
	(165.92,-11.74) --
	(163.10,-11.74) --
	(160.28,-11.74) --
	(157.46,-11.74) --
	(154.64,-11.74) --
	(151.82,-11.74) --
	(149.00,-11.74) --
	(146.18,-11.74) --
	(143.36,-11.74) --
	(140.54,-11.74) --
	(137.72,-11.74) --
	(134.90,-11.74) --
	(132.08,-11.74) --
	(129.26,-11.74) --
	(126.44,-11.74) --
	(123.62,-11.74) --
	(120.80,-11.74) --
	(117.98,-11.74) --
	(115.16,-11.74) --
	(112.34,-11.74) --
	(109.52,-11.74) --
	(106.70,-11.74) --
	(103.88,-11.74) --
	(101.06,-11.74) --
	( 98.24,-11.74) --
	( 95.42,-11.74) --
	( 92.60,-11.74) --
	( 89.78,-11.74) --
	( 86.96,-11.74) --
	( 84.14,-11.74) --
	( 81.32,-11.74) --
	( 78.50,-11.74) --
	( 75.68,-11.74) --
	( 72.86,-11.74) --
	( 70.04,-11.74) --
	( 67.22,-11.74) --
	( 64.40,-11.74) --
	( 61.58,-11.74) --
	( 58.76,-11.74) --
	( 55.94,-11.74) --
	( 53.12,-11.74) --
	( 50.30,-11.74) --
	( 47.48,-11.74) --
	( 44.66,-11.74) --
	( 41.84,-11.74) --
	( 39.02,-11.74) --
	( 36.20,-11.74) --
	( 33.38,-11.74) --
	( 30.56,-11.74) --
	( 27.74,-11.74) --
	( 24.92,-11.74) --
	( 22.10,-11.74) --
	( 19.28,-11.74) --
	( 16.46,-11.74) --
	( 13.64,-11.74) --
	( 10.82,-11.74) --
	(  8.00,-11.74) --
	(  5.18,-11.74) --
	(  2.36,-11.74) --
	( -0.46,-11.74) --
	( -3.28,-11.74) --
	( -6.10,-11.74) --
	( -8.92,-11.74) --
	(-11.74,-11.74) --
	cycle;

\path[] (-11.74,267.44) --
	( -8.92,267.44) --
	( -6.10,267.44) --
	( -3.28,267.44) --
	( -0.46,267.44) --
	(  2.36,267.44) --
	(  5.18,267.44) --
	(  8.00,264.25) --
	( 10.82,259.47) --
	( 13.64,254.79) --
	( 16.46,250.24) --
	( 19.28,245.79) --
	( 22.10,241.46) --
	( 24.92,237.25) --
	( 27.74,233.15) --
	( 30.56,229.16) --
	( 33.38,225.28) --
	( 36.20,221.52) --
	( 39.02,217.88) --
	( 41.84,214.35) --
	( 44.66,210.93) --
	( 47.48,207.62) --
	( 50.30,204.43) --
	( 53.12,201.36) --
	( 55.94,198.39) --
	( 58.76,195.55) --
	( 61.58,192.81) --
	( 64.40,190.19) --
	( 67.22,187.68) --
	( 70.04,185.29) --
	( 72.86,183.01) --
	( 75.68,180.85) --
	( 78.50,178.80) --
	( 81.32,176.86) --
	( 84.14,175.04) --
	( 86.96,173.33) --
	( 89.78,171.73) --
	( 92.60,170.25) --
	( 95.42,168.88) --
	( 98.24,167.63) --
	(101.06,166.49) --
	(103.88,165.47) --
	(106.70,164.55) --
	(109.52,163.76) --
	(112.34,163.07) --
	(115.16,162.50) --
	(117.98,162.05) --
	(120.80,161.71) --
	(123.62,161.48) --
	(126.44,161.36) --
	(129.26,161.36) --
	(132.08,161.48) --
	(134.90,161.71) --
	(137.72,162.05) --
	(140.54,162.50) --
	(143.36,163.07) --
	(146.18,163.76) --
	(149.00,164.55) --
	(151.82,165.47) --
	(154.64,166.49) --
	(157.46,167.63) --
	(160.28,168.88) --
	(163.10,170.25) --
	(165.92,171.73) --
	(168.74,173.33) --
	(171.56,175.04) --
	(174.38,176.86) --
	(177.20,178.80) --
	(180.02,180.85) --
	(182.84,183.01) --
	(185.66,185.29) --
	(188.48,187.68) --
	(191.30,190.19) --
	(194.12,192.81) --
	(196.94,195.55) --
	(199.76,198.39) --
	(202.58,201.36) --
	(205.40,204.43) --
	(208.22,207.62) --
	(211.04,210.93) --
	(213.86,214.35) --
	(216.68,217.88) --
	(219.50,221.52) --
	(222.32,225.28) --
	(225.14,229.16) --
	(227.96,233.15) --
	(230.78,237.25) --
	(233.60,241.46) --
	(236.42,245.79) --
	(239.24,250.24) --
	(242.06,254.79) --
	(244.88,259.47) --
	(247.70,264.25) --
	(250.52,267.44) --
	(253.34,267.44) --
	(256.16,267.44) --
	(258.98,267.44) --
	(261.80,267.44) --
	(264.62,267.44) --
	(267.44,267.44);

\path[] (267.44,-11.74) --
	(264.62,-11.74) --
	(261.80,-11.74) --
	(258.98,-11.74) --
	(256.16,-11.74) --
	(253.34,-11.74) --
	(250.52,-11.74) --
	(247.70,-11.74) --
	(244.88,-11.74) --
	(242.06,-11.74) --
	(239.24,-11.74) --
	(236.42,-11.74) --
	(233.60,-11.74) --
	(230.78,-11.74) --
	(227.96,-11.74) --
	(225.14,-11.74) --
	(222.32,-11.74) --
	(219.50,-11.74) --
	(216.68,-11.74) --
	(213.86,-11.74) --
	(211.04,-11.74) --
	(208.22,-11.74) --
	(205.40,-11.74) --
	(202.58,-11.74) --
	(199.76,-11.74) --
	(196.94,-11.74) --
	(194.12,-11.74) --
	(191.30,-11.74) --
	(188.48,-11.74) --
	(185.66,-11.74) --
	(182.84,-11.74) --
	(180.02,-11.74) --
	(177.20,-11.74) --
	(174.38,-11.74) --
	(171.56,-11.74) --
	(168.74,-11.74) --
	(165.92,-11.74) --
	(163.10,-11.74) --
	(160.28,-11.74) --
	(157.46,-11.74) --
	(154.64,-11.74) --
	(151.82,-11.74) --
	(149.00,-11.74) --
	(146.18,-11.74) --
	(143.36,-11.74) --
	(140.54,-11.74) --
	(137.72,-11.74) --
	(134.90,-11.74) --
	(132.08,-11.74) --
	(129.26,-11.74) --
	(126.44,-11.74) --
	(123.62,-11.74) --
	(120.80,-11.74) --
	(117.98,-11.74) --
	(115.16,-11.74) --
	(112.34,-11.74) --
	(109.52,-11.74) --
	(106.70,-11.74) --
	(103.88,-11.74) --
	(101.06,-11.74) --
	( 98.24,-11.74) --
	( 95.42,-11.74) --
	( 92.60,-11.74) --
	( 89.78,-11.74) --
	( 86.96,-11.74) --
	( 84.14,-11.74) --
	( 81.32,-11.74) --
	( 78.50,-11.74) --
	( 75.68,-11.74) --
	( 72.86,-11.74) --
	( 70.04,-11.74) --
	( 67.22,-11.74) --
	( 64.40,-11.74) --
	( 61.58,-11.74) --
	( 58.76,-11.74) --
	( 55.94,-11.74) --
	( 53.12,-11.74) --
	( 50.30,-11.74) --
	( 47.48,-11.74) --
	( 44.66,-11.74) --
	( 41.84,-11.74) --
	( 39.02,-11.74) --
	( 36.20,-11.74) --
	( 33.38,-11.74) --
	( 30.56,-11.74) --
	( 27.74,-11.74) --
	( 24.92,-11.74) --
	( 22.10,-11.74) --
	( 19.28,-11.74) --
	( 16.46,-11.74) --
	( 13.64,-11.74) --
	( 10.82,-11.74) --
	(  8.00,-11.74) --
	(  5.18,-11.74) --
	(  2.36,-11.74) --
	( -0.46,-11.74) --
	( -3.28,-11.74) --
	( -6.10,-11.74) --
	( -8.92,-11.74) --
	(-11.74,-11.74);
  \node[text=MyOrange,inner sep=0pt,anchor=west, outer sep=0pt, scale=  2.5] at ( 80.00,240.00) {$U_1=5$};
 \node[text=fillColor,inner sep=0pt,anchor=west, outer sep=0pt, scale=  2.5] at ( 80.00,200.00) {$U_2=0$};
\end{scope}
\end{tikzpicture}}\\
\resizebox{\textwidth}{!}{
\begin{tikzpicture}[x=1pt,y=1pt]
\definecolor{fillColor}{RGB}{255,255,255}
\definecolor{MyOrange}{rgb}{0.8, 0.33, 0}
\path[use as bounding box,fill=fillColor,fill opacity=0.00] (0,0) rectangle (252.94,252.94);
\begin{scope}
\path[clip] (  0.00,  0.00) rectangle (252.94,252.94);
\definecolor{fillColor}{RGB}{69,33,84}

\path[fill=fillColor] (-11.74,133.43) --
	( -8.92,127.85) --
	( -6.10,122.38) --
	( -3.28,117.02) --
	( -0.46,111.78) --
	(  2.36,106.66) --
	(  5.18,101.64) --
	(  8.00, 96.74) --
	( 10.82, 91.96) --
	( 13.64, 87.29) --
	( 16.46, 82.73) --
	( 19.28, 78.28) --
	( 22.10, 73.95) --
	( 24.92, 69.74) --
	( 27.74, 65.64) --
	( 30.56, 61.65) --
	( 33.38, 57.77) --
	( 36.20, 54.01) --
	( 39.02, 50.37) --
	( 41.84, 46.84) --
	( 44.66, 43.42) --
	( 47.48, 40.11) --
	( 50.30, 36.92) --
	( 53.12, 33.85) --
	( 55.94, 30.88) --
	( 58.76, 28.04) --
	( 61.58, 25.30) --
	( 64.40, 22.68) --
	( 67.22, 20.17) --
	( 70.04, 17.78) --
	( 72.86, 15.50) --
	( 75.68, 13.34) --
	( 78.50, 11.29) --
	( 81.32,  9.35) --
	( 84.14,  7.53) --
	( 86.96,  5.82) --
	( 89.78,  4.22) --
	( 92.60,  2.74) --
	( 95.42,  1.37) --
	( 98.24,  0.12) --
	(101.06, -1.02) --
	(103.88, -2.04) --
	(106.70, -2.96) --
	(109.52, -3.75) --
	(112.34, -4.44) --
	(115.16, -5.01) --
	(117.98, -5.46) --
	(120.80, -5.80) --
	(123.62, -6.03) --
	(126.44, -6.15) --
	(129.26, -6.15) --
	(132.08, -6.03) --
	(134.90, -5.80) --
	(137.72, -5.46) --
	(140.54, -5.01) --
	(143.36, -4.44) --
	(146.18, -3.75) --
	(149.00, -2.96) --
	(151.82, -2.04) --
	(154.64, -1.02) --
	(157.46,  0.12) --
	(160.28,  1.37) --
	(163.10,  2.74) --
	(165.92,  4.22) --
	(168.74,  5.82) --
	(171.56,  7.53) --
	(174.38,  9.35) --
	(177.20, 11.29) --
	(180.02, 13.34) --
	(182.84, 15.50) --
	(185.66, 17.78) --
	(188.48, 20.17) --
	(191.30, 22.68) --
	(194.12, 25.30) --
	(196.94, 28.04) --
	(199.76, 30.88) --
	(202.58, 33.85) --
	(205.40, 36.92) --
	(208.22, 40.11) --
	(211.04, 43.42) --
	(213.86, 46.84) --
	(216.68, 50.37) --
	(219.50, 54.01) --
	(222.32, 57.77) --
	(225.14, 61.65) --
	(227.96, 65.64) --
	(230.78, 69.74) --
	(233.60, 73.95) --
	(236.42, 78.28) --
	(239.24, 82.73) --
	(242.06, 87.29) --
	(244.88, 91.96) --
	(247.70, 96.74) --
	(250.52,101.64) --
	(253.34,106.66) --
	(256.16,111.78) --
	(258.98,117.02) --
	(261.80,122.38) --
	(264.62,127.85) --
	(267.44,133.43) --
	(267.44,-11.74) --
	(264.62,-11.74) --
	(261.80,-11.74) --
	(258.98,-11.74) --
	(256.16,-11.74) --
	(253.34,-11.74) --
	(250.52,-11.74) --
	(247.70,-11.74) --
	(244.88,-11.74) --
	(242.06,-11.74) --
	(239.24,-11.74) --
	(236.42,-11.74) --
	(233.60,-11.74) --
	(230.78,-11.74) --
	(227.96,-11.74) --
	(225.14,-11.74) --
	(222.32,-11.74) --
	(219.50,-11.74) --
	(216.68,-11.74) --
	(213.86,-11.74) --
	(211.04,-11.74) --
	(208.22,-11.74) --
	(205.40,-11.74) --
	(202.58,-11.74) --
	(199.76,-11.74) --
	(196.94,-11.74) --
	(194.12,-11.74) --
	(191.30,-11.74) --
	(188.48,-11.74) --
	(185.66,-11.74) --
	(182.84,-11.74) --
	(180.02,-11.74) --
	(177.20,-11.74) --
	(174.38,-11.74) --
	(171.56,-11.74) --
	(168.74,-11.74) --
	(165.92,-11.74) --
	(163.10,-11.74) --
	(160.28,-11.74) --
	(157.46,-11.74) --
	(154.64,-11.74) --
	(151.82,-11.74) --
	(149.00,-11.74) --
	(146.18,-11.74) --
	(143.36,-11.74) --
	(140.54,-11.74) --
	(137.72,-11.74) --
	(134.90,-11.74) --
	(132.08,-11.74) --
	(129.26,-11.74) --
	(126.44,-11.74) --
	(123.62,-11.74) --
	(120.80,-11.74) --
	(117.98,-11.74) --
	(115.16,-11.74) --
	(112.34,-11.74) --
	(109.52,-11.74) --
	(106.70,-11.74) --
	(103.88,-11.74) --
	(101.06,-11.74) --
	( 98.24,-11.74) --
	( 95.42,-11.74) --
	( 92.60,-11.74) --
	( 89.78,-11.74) --
	( 86.96,-11.74) --
	( 84.14,-11.74) --
	( 81.32,-11.74) --
	( 78.50,-11.74) --
	( 75.68,-11.74) --
	( 72.86,-11.74) --
	( 70.04,-11.74) --
	( 67.22,-11.74) --
	( 64.40,-11.74) --
	( 61.58,-11.74) --
	( 58.76,-11.74) --
	( 55.94,-11.74) --
	( 53.12,-11.74) --
	( 50.30,-11.74) --
	( 47.48,-11.74) --
	( 44.66,-11.74) --
	( 41.84,-11.74) --
	( 39.02,-11.74) --
	( 36.20,-11.74) --
	( 33.38,-11.74) --
	( 30.56,-11.74) --
	( 27.74,-11.74) --
	( 24.92,-11.74) --
	( 22.10,-11.74) --
	( 19.28,-11.74) --
	( 16.46,-11.74) --
	( 13.64,-11.74) --
	( 10.82,-11.74) --
	(  8.00,-11.74) --
	(  5.18,-11.74) --
	(  2.36,-11.74) --
	( -0.46,-11.74) --
	( -3.28,-11.74) --
	( -6.10,-11.74) --
	( -8.92,-11.74) --
	(-11.74,-11.74) --
	cycle;

\path[] (-11.74,133.43) --
	( -8.92,127.85) --
	( -6.10,122.38) --
	( -3.28,117.02) --
	( -0.46,111.78) --
	(  2.36,106.66) --
	(  5.18,101.64) --
	(  8.00, 96.74) --
	( 10.82, 91.96) --
	( 13.64, 87.29) --
	( 16.46, 82.73) --
	( 19.28, 78.28) --
	( 22.10, 73.95) --
	( 24.92, 69.74) --
	( 27.74, 65.64) --
	( 30.56, 61.65) --
	( 33.38, 57.77) --
	( 36.20, 54.01) --
	( 39.02, 50.37) --
	( 41.84, 46.84) --
	( 44.66, 43.42) --
	( 47.48, 40.11) --
	( 50.30, 36.92) --
	( 53.12, 33.85) --
	( 55.94, 30.88) --
	( 58.76, 28.04) --
	( 61.58, 25.30) --
	( 64.40, 22.68) --
	( 67.22, 20.17) --
	( 70.04, 17.78) --
	( 72.86, 15.50) --
	( 75.68, 13.34) --
	( 78.50, 11.29) --
	( 81.32,  9.35) --
	( 84.14,  7.53) --
	( 86.96,  5.82) --
	( 89.78,  4.22) --
	( 92.60,  2.74) --
	( 95.42,  1.37) --
	( 98.24,  0.12) --
	(101.06, -1.02) --
	(103.88, -2.04) --
	(106.70, -2.96) --
	(109.52, -3.75) --
	(112.34, -4.44) --
	(115.16, -5.01) --
	(117.98, -5.46) --
	(120.80, -5.80) --
	(123.62, -6.03) --
	(126.44, -6.15) --
	(129.26, -6.15) --
	(132.08, -6.03) --
	(134.90, -5.80) --
	(137.72, -5.46) --
	(140.54, -5.01) --
	(143.36, -4.44) --
	(146.18, -3.75) --
	(149.00, -2.96) --
	(151.82, -2.04) --
	(154.64, -1.02) --
	(157.46,  0.12) --
	(160.28,  1.37) --
	(163.10,  2.74) --
	(165.92,  4.22) --
	(168.74,  5.82) --
	(171.56,  7.53) --
	(174.38,  9.35) --
	(177.20, 11.29) --
	(180.02, 13.34) --
	(182.84, 15.50) --
	(185.66, 17.78) --
	(188.48, 20.17) --
	(191.30, 22.68) --
	(194.12, 25.30) --
	(196.94, 28.04) --
	(199.76, 30.88) --
	(202.58, 33.85) --
	(205.40, 36.92) --
	(208.22, 40.11) --
	(211.04, 43.42) --
	(213.86, 46.84) --
	(216.68, 50.37) --
	(219.50, 54.01) --
	(222.32, 57.77) --
	(225.14, 61.65) --
	(227.96, 65.64) --
	(230.78, 69.74) --
	(233.60, 73.95) --
	(236.42, 78.28) --
	(239.24, 82.73) --
	(242.06, 87.29) --
	(244.88, 91.96) --
	(247.70, 96.74) --
	(250.52,101.64) --
	(253.34,106.66) --
	(256.16,111.78) --
	(258.98,117.02) --
	(261.80,122.38) --
	(264.62,127.85) --
	(267.44,133.43);

\path[] (267.44,-11.74) --
	(264.62,-11.74) --
	(261.80,-11.74) --
	(258.98,-11.74) --
	(256.16,-11.74) --
	(253.34,-11.74) --
	(250.52,-11.74) --
	(247.70,-11.74) --
	(244.88,-11.74) --
	(242.06,-11.74) --
	(239.24,-11.74) --
	(236.42,-11.74) --
	(233.60,-11.74) --
	(230.78,-11.74) --
	(227.96,-11.74) --
	(225.14,-11.74) --
	(222.32,-11.74) --
	(219.50,-11.74) --
	(216.68,-11.74) --
	(213.86,-11.74) --
	(211.04,-11.74) --
	(208.22,-11.74) --
	(205.40,-11.74) --
	(202.58,-11.74) --
	(199.76,-11.74) --
	(196.94,-11.74) --
	(194.12,-11.74) --
	(191.30,-11.74) --
	(188.48,-11.74) --
	(185.66,-11.74) --
	(182.84,-11.74) --
	(180.02,-11.74) --
	(177.20,-11.74) --
	(174.38,-11.74) --
	(171.56,-11.74) --
	(168.74,-11.74) --
	(165.92,-11.74) --
	(163.10,-11.74) --
	(160.28,-11.74) --
	(157.46,-11.74) --
	(154.64,-11.74) --
	(151.82,-11.74) --
	(149.00,-11.74) --
	(146.18,-11.74) --
	(143.36,-11.74) --
	(140.54,-11.74) --
	(137.72,-11.74) --
	(134.90,-11.74) --
	(132.08,-11.74) --
	(129.26,-11.74) --
	(126.44,-11.74) --
	(123.62,-11.74) --
	(120.80,-11.74) --
	(117.98,-11.74) --
	(115.16,-11.74) --
	(112.34,-11.74) --
	(109.52,-11.74) --
	(106.70,-11.74) --
	(103.88,-11.74) --
	(101.06,-11.74) --
	( 98.24,-11.74) --
	( 95.42,-11.74) --
	( 92.60,-11.74) --
	( 89.78,-11.74) --
	( 86.96,-11.74) --
	( 84.14,-11.74) --
	( 81.32,-11.74) --
	( 78.50,-11.74) --
	( 75.68,-11.74) --
	( 72.86,-11.74) --
	( 70.04,-11.74) --
	( 67.22,-11.74) --
	( 64.40,-11.74) --
	( 61.58,-11.74) --
	( 58.76,-11.74) --
	( 55.94,-11.74) --
	( 53.12,-11.74) --
	( 50.30,-11.74) --
	( 47.48,-11.74) --
	( 44.66,-11.74) --
	( 41.84,-11.74) --
	( 39.02,-11.74) --
	( 36.20,-11.74) --
	( 33.38,-11.74) --
	( 30.56,-11.74) --
	( 27.74,-11.74) --
	( 24.92,-11.74) --
	( 22.10,-11.74) --
	( 19.28,-11.74) --
	( 16.46,-11.74) --
	( 13.64,-11.74) --
	( 10.82,-11.74) --
	(  8.00,-11.74) --
	(  5.18,-11.74) --
	(  2.36,-11.74) --
	( -0.46,-11.74) --
	( -3.28,-11.74) --
	( -6.10,-11.74) --
	( -8.92,-11.74) --
	(-11.74,-11.74);

\node[text=fillColor,inner sep=0pt,anchor=west, outer sep=0pt, scale=  2.5] at ( 80.00,215.00) {$U_1=0$};
 \node[text=MyOrange,inner sep=0pt,anchor=west, outer sep=0pt, scale=  2.5] at ( 80.00,175.00) {$U_2=\text{-}5$};
\end{scope}
\end{tikzpicture}
\begin{tikzpicture}[x=1pt,y=1pt]
\definecolor{fillColor}{RGB}{255,255,255}
\definecolor{MyOrange}{rgb}{0.8, 0.33, 0}
\path[use as bounding box,fill=fillColor,fill opacity=0.00] (0,0) rectangle (252.94,252.94);
\begin{scope}
\path[clip] (  0.00,  0.00) rectangle (252.94,252.94);
\definecolor{fillColor}{RGB}{69,33,84}

\path[fill=fillColor] (-11.74,238.12) --
	( -8.92,232.54) --
	( -6.10,227.07) --
	( -3.28,221.72) --
	( -0.46,216.48) --
	(  2.36,211.35) --
	(  5.18,206.34) --
	(  8.00,201.44) --
	( 10.82,196.65) --
	( 13.64,191.98) --
	( 16.46,187.42) --
	( 19.28,182.98) --
	( 22.10,178.65) --
	( 24.92,174.43) --
	( 27.74,170.33) --
	( 30.56,166.34) --
	( 33.38,162.47) --
	( 36.20,158.71) --
	( 39.02,155.06) --
	( 41.84,151.53) --
	( 44.66,148.11) --
	( 47.48,144.81) --
	( 50.30,141.62) --
	( 53.12,138.54) --
	( 55.94,135.58) --
	( 58.76,132.73) --
	( 61.58,130.00) --
	( 64.40,127.37) --
	( 67.22,124.87) --
	( 70.04,122.48) --
	( 72.86,120.20) --
	( 75.68,118.03) --
	( 78.50,115.98) --
	( 81.32,114.04) --
	( 84.14,112.22) --
	( 86.96,110.51) --
	( 89.78,108.92) --
	( 92.60,107.43) --
	( 95.42,106.07) --
	( 98.24,104.81) --
	(101.06,103.67) --
	(103.88,102.65) --
	(106.70,101.74) --
	(109.52,100.94) --
	(112.34,100.26) --
	(115.16, 99.69) --
	(117.98, 99.23) --
	(120.80, 98.89) --
	(123.62, 98.66) --
	(126.44, 98.55) --
	(129.26, 98.55) --
	(132.08, 98.66) --
	(134.90, 98.89) --
	(137.72, 99.23) --
	(140.54, 99.69) --
	(143.36,100.26) --
	(146.18,100.94) --
	(149.00,101.74) --
	(151.82,102.65) --
	(154.64,103.67) --
	(157.46,104.81) --
	(160.28,106.07) --
	(163.10,107.43) --
	(165.92,108.92) --
	(168.74,110.51) --
	(171.56,112.22) --
	(174.38,114.04) --
	(177.20,115.98) --
	(180.02,118.03) --
	(182.84,120.20) --
	(185.66,122.48) --
	(188.48,124.87) --
	(191.30,127.37) --
	(194.12,130.00) --
	(196.94,132.73) --
	(199.76,135.58) --
	(202.58,138.54) --
	(205.40,141.62) --
	(208.22,144.81) --
	(211.04,148.11) --
	(213.86,151.53) --
	(216.68,155.06) --
	(219.50,158.71) --
	(222.32,162.47) --
	(225.14,166.34) --
	(227.96,170.33) --
	(230.78,174.43) --
	(233.60,178.65) --
	(236.42,182.98) --
	(239.24,187.42) --
	(242.06,191.98) --
	(244.88,196.65) --
	(247.70,201.44) --
	(250.52,206.34) --
	(253.34,211.35) --
	(256.16,216.48) --
	(258.98,221.72) --
	(261.80,227.07) --
	(264.62,232.54) --
	(267.44,238.12) --
	(267.44,-11.74) --
	(264.62,-11.74) --
	(261.80,-11.74) --
	(258.98,-11.74) --
	(256.16,-11.74) --
	(253.34,-11.74) --
	(250.52,-11.74) --
	(247.70,-11.74) --
	(244.88,-11.74) --
	(242.06,-11.74) --
	(239.24,-11.74) --
	(236.42,-11.74) --
	(233.60,-11.74) --
	(230.78,-11.74) --
	(227.96,-11.74) --
	(225.14,-11.74) --
	(222.32,-11.74) --
	(219.50,-11.74) --
	(216.68,-11.74) --
	(213.86,-11.74) --
	(211.04,-11.74) --
	(208.22,-11.74) --
	(205.40,-11.74) --
	(202.58,-11.74) --
	(199.76,-11.74) --
	(196.94,-11.74) --
	(194.12,-11.74) --
	(191.30,-11.74) --
	(188.48,-11.74) --
	(185.66,-11.74) --
	(182.84,-11.74) --
	(180.02,-11.74) --
	(177.20,-11.74) --
	(174.38,-11.74) --
	(171.56,-11.74) --
	(168.74,-11.74) --
	(165.92,-11.74) --
	(163.10,-11.74) --
	(160.28,-11.74) --
	(157.46,-11.74) --
	(154.64,-11.74) --
	(151.82,-11.74) --
	(149.00,-11.74) --
	(146.18,-11.74) --
	(143.36,-11.74) --
	(140.54,-11.74) --
	(137.72,-11.74) --
	(134.90,-11.74) --
	(132.08,-11.74) --
	(129.26,-11.74) --
	(126.44,-11.74) --
	(123.62,-11.74) --
	(120.80,-11.74) --
	(117.98,-11.74) --
	(115.16,-11.74) --
	(112.34,-11.74) --
	(109.52,-11.74) --
	(106.70,-11.74) --
	(103.88,-11.74) --
	(101.06,-11.74) --
	( 98.24,-11.74) --
	( 95.42,-11.74) --
	( 92.60,-11.74) --
	( 89.78,-11.74) --
	( 86.96,-11.74) --
	( 84.14,-11.74) --
	( 81.32,-11.74) --
	( 78.50,-11.74) --
	( 75.68,-11.74) --
	( 72.86,-11.74) --
	( 70.04,-11.74) --
	( 67.22,-11.74) --
	( 64.40,-11.74) --
	( 61.58,-11.74) --
	( 58.76,-11.74) --
	( 55.94,-11.74) --
	( 53.12,-11.74) --
	( 50.30,-11.74) --
	( 47.48,-11.74) --
	( 44.66,-11.74) --
	( 41.84,-11.74) --
	( 39.02,-11.74) --
	( 36.20,-11.74) --
	( 33.38,-11.74) --
	( 30.56,-11.74) --
	( 27.74,-11.74) --
	( 24.92,-11.74) --
	( 22.10,-11.74) --
	( 19.28,-11.74) --
	( 16.46,-11.74) --
	( 13.64,-11.74) --
	( 10.82,-11.74) --
	(  8.00,-11.74) --
	(  5.18,-11.74) --
	(  2.36,-11.74) --
	( -0.46,-11.74) --
	( -3.28,-11.74) --
	( -6.10,-11.74) --
	( -8.92,-11.74) --
	(-11.74,-11.74) --
	cycle;

\path[] (-11.74,238.12) --
	( -8.92,232.54) --
	( -6.10,227.07) --
	( -3.28,221.72) --
	( -0.46,216.48) --
	(  2.36,211.35) --
	(  5.18,206.34) --
	(  8.00,201.44) --
	( 10.82,196.65) --
	( 13.64,191.98) --
	( 16.46,187.42) --
	( 19.28,182.98) --
	( 22.10,178.65) --
	( 24.92,174.43) --
	( 27.74,170.33) --
	( 30.56,166.34) --
	( 33.38,162.47) --
	( 36.20,158.71) --
	( 39.02,155.06) --
	( 41.84,151.53) --
	( 44.66,148.11) --
	( 47.48,144.81) --
	( 50.30,141.62) --
	( 53.12,138.54) --
	( 55.94,135.58) --
	( 58.76,132.73) --
	( 61.58,130.00) --
	( 64.40,127.37) --
	( 67.22,124.87) --
	( 70.04,122.48) --
	( 72.86,120.20) --
	( 75.68,118.03) --
	( 78.50,115.98) --
	( 81.32,114.04) --
	( 84.14,112.22) --
	( 86.96,110.51) --
	( 89.78,108.92) --
	( 92.60,107.43) --
	( 95.42,106.07) --
	( 98.24,104.81) --
	(101.06,103.67) --
	(103.88,102.65) --
	(106.70,101.74) --
	(109.52,100.94) --
	(112.34,100.26) --
	(115.16, 99.69) --
	(117.98, 99.23) --
	(120.80, 98.89) --
	(123.62, 98.66) --
	(126.44, 98.55) --
	(129.26, 98.55) --
	(132.08, 98.66) --
	(134.90, 98.89) --
	(137.72, 99.23) --
	(140.54, 99.69) --
	(143.36,100.26) --
	(146.18,100.94) --
	(149.00,101.74) --
	(151.82,102.65) --
	(154.64,103.67) --
	(157.46,104.81) --
	(160.28,106.07) --
	(163.10,107.43) --
	(165.92,108.92) --
	(168.74,110.51) --
	(171.56,112.22) --
	(174.38,114.04) --
	(177.20,115.98) --
	(180.02,118.03) --
	(182.84,120.20) --
	(185.66,122.48) --
	(188.48,124.87) --
	(191.30,127.37) --
	(194.12,130.00) --
	(196.94,132.73) --
	(199.76,135.58) --
	(202.58,138.54) --
	(205.40,141.62) --
	(208.22,144.81) --
	(211.04,148.11) --
	(213.86,151.53) --
	(216.68,155.06) --
	(219.50,158.71) --
	(222.32,162.47) --
	(225.14,166.34) --
	(227.96,170.33) --
	(230.78,174.43) --
	(233.60,178.65) --
	(236.42,182.98) --
	(239.24,187.42) --
	(242.06,191.98) --
	(244.88,196.65) --
	(247.70,201.44) --
	(250.52,206.34) --
	(253.34,211.35) --
	(256.16,216.48) --
	(258.98,221.72) --
	(261.80,227.07) --
	(264.62,232.54) --
	(267.44,238.12);

\path[] (267.44,-11.74) --
	(264.62,-11.74) --
	(261.80,-11.74) --
	(258.98,-11.74) --
	(256.16,-11.74) --
	(253.34,-11.74) --
	(250.52,-11.74) --
	(247.70,-11.74) --
	(244.88,-11.74) --
	(242.06,-11.74) --
	(239.24,-11.74) --
	(236.42,-11.74) --
	(233.60,-11.74) --
	(230.78,-11.74) --
	(227.96,-11.74) --
	(225.14,-11.74) --
	(222.32,-11.74) --
	(219.50,-11.74) --
	(216.68,-11.74) --
	(213.86,-11.74) --
	(211.04,-11.74) --
	(208.22,-11.74) --
	(205.40,-11.74) --
	(202.58,-11.74) --
	(199.76,-11.74) --
	(196.94,-11.74) --
	(194.12,-11.74) --
	(191.30,-11.74) --
	(188.48,-11.74) --
	(185.66,-11.74) --
	(182.84,-11.74) --
	(180.02,-11.74) --
	(177.20,-11.74) --
	(174.38,-11.74) --
	(171.56,-11.74) --
	(168.74,-11.74) --
	(165.92,-11.74) --
	(163.10,-11.74) --
	(160.28,-11.74) --
	(157.46,-11.74) --
	(154.64,-11.74) --
	(151.82,-11.74) --
	(149.00,-11.74) --
	(146.18,-11.74) --
	(143.36,-11.74) --
	(140.54,-11.74) --
	(137.72,-11.74) --
	(134.90,-11.74) --
	(132.08,-11.74) --
	(129.26,-11.74) --
	(126.44,-11.74) --
	(123.62,-11.74) --
	(120.80,-11.74) --
	(117.98,-11.74) --
	(115.16,-11.74) --
	(112.34,-11.74) --
	(109.52,-11.74) --
	(106.70,-11.74) --
	(103.88,-11.74) --
	(101.06,-11.74) --
	( 98.24,-11.74) --
	( 95.42,-11.74) --
	( 92.60,-11.74) --
	( 89.78,-11.74) --
	( 86.96,-11.74) --
	( 84.14,-11.74) --
	( 81.32,-11.74) --
	( 78.50,-11.74) --
	( 75.68,-11.74) --
	( 72.86,-11.74) --
	( 70.04,-11.74) --
	( 67.22,-11.74) --
	( 64.40,-11.74) --
	( 61.58,-11.74) --
	( 58.76,-11.74) --
	( 55.94,-11.74) --
	( 53.12,-11.74) --
	( 50.30,-11.74) --
	( 47.48,-11.74) --
	( 44.66,-11.74) --
	( 41.84,-11.74) --
	( 39.02,-11.74) --
	( 36.20,-11.74) --
	( 33.38,-11.74) --
	( 30.56,-11.74) --
	( 27.74,-11.74) --
	( 24.92,-11.74) --
	( 22.10,-11.74) --
	( 19.28,-11.74) --
	( 16.46,-11.74) --
	( 13.64,-11.74) --
	( 10.82,-11.74) --
	(  8.00,-11.74) --
	(  5.18,-11.74) --
	(  2.36,-11.74) --
	( -0.46,-11.74) --
	( -3.28,-11.74) --
	( -6.10,-11.74) --
	( -8.92,-11.74) --
	(-11.74,-11.74);
\node[text=fillColor,inner sep=0pt,anchor=west, outer sep=0pt, scale=  2.5] at ( 80.00,215.00) {$U_1=0$};
 \node[text=MyOrange,inner sep=0pt,anchor=west, outer sep=0pt, scale=  2.5] at ( 80.00,175.00) {$U_2=\text{-}2.5$};
\end{scope}
\end{tikzpicture}
\begin{tikzpicture}[x=1pt,y=1pt]
\definecolor{fillColor}{RGB}{255,255,255}
\definecolor{MyOrange}{rgb}{0.8, 0.33, 0}
\path[use as bounding box,fill=fillColor,fill opacity=0.00] (0,0) rectangle (252.94,252.94);
\begin{scope}
\path[clip] (  0.00,  0.00) rectangle (252.94,252.94);
\definecolor{fillColor}{RGB}{69,33,84}

\path[fill=fillColor] (-11.74,267.44) --
	( -8.92,267.44) --
	( -6.10,261.97) --
	( -3.28,256.61) --
	( -0.46,251.37) --
	(  2.36,246.25) --
	(  5.18,241.23) --
	(  8.00,236.33) --
	( 10.82,231.55) --
	( 13.64,226.88) --
	( 16.46,222.32) --
	( 19.28,217.88) --
	( 22.10,213.55) --
	( 24.92,209.33) --
	( 27.74,205.23) --
	( 30.56,201.24) --
	( 33.38,197.37) --
	( 36.20,193.61) --
	( 39.02,189.96) --
	( 41.84,186.43) --
	( 44.66,183.01) --
	( 47.48,179.71) --
	( 50.30,176.51) --
	( 53.12,173.44) --
	( 55.94,170.48) --
	( 58.76,167.63) --
	( 61.58,164.89) --
	( 64.40,162.27) --
	( 67.22,159.77) --
	( 70.04,157.37) --
	( 72.86,155.09) --
	( 75.68,152.93) --
	( 78.50,150.88) --
	( 81.32,148.94) --
	( 84.14,147.12) --
	( 86.96,145.41) --
	( 89.78,143.81) --
	( 92.60,142.33) --
	( 95.42,140.97) --
	( 98.24,139.71) --
	(101.06,138.57) --
	(103.88,137.55) --
	(106.70,136.64) --
	(109.52,135.84) --
	(112.34,135.15) --
	(115.16,134.58) --
	(117.98,134.13) --
	(120.80,133.79) --
	(123.62,133.56) --
	(126.44,133.45) --
	(129.26,133.45) --
	(132.08,133.56) --
	(134.90,133.79) --
	(137.72,134.13) --
	(140.54,134.58) --
	(143.36,135.15) --
	(146.18,135.84) --
	(149.00,136.64) --
	(151.82,137.55) --
	(154.64,138.57) --
	(157.46,139.71) --
	(160.28,140.97) --
	(163.10,142.33) --
	(165.92,143.81) --
	(168.74,145.41) --
	(171.56,147.12) --
	(174.38,148.94) --
	(177.20,150.88) --
	(180.02,152.93) --
	(182.84,155.09) --
	(185.66,157.37) --
	(188.48,159.77) --
	(191.30,162.27) --
	(194.12,164.89) --
	(196.94,167.63) --
	(199.76,170.48) --
	(202.58,173.44) --
	(205.40,176.51) --
	(208.22,179.71) --
	(211.04,183.01) --
	(213.86,186.43) --
	(216.68,189.96) --
	(219.50,193.61) --
	(222.32,197.37) --
	(225.14,201.24) --
	(227.96,205.23) --
	(230.78,209.33) --
	(233.60,213.55) --
	(236.42,217.88) --
	(239.24,222.32) --
	(242.06,226.88) --
	(244.88,231.55) --
	(247.70,236.33) --
	(250.52,241.23) --
	(253.34,246.25) --
	(256.16,251.37) --
	(258.98,256.61) --
	(261.80,261.97) --
	(264.62,267.44) --
	(267.44,267.44) --
	(267.44,-11.74) --
	(264.62,-11.74) --
	(261.80,-11.74) --
	(258.98,-11.74) --
	(256.16,-11.74) --
	(253.34,-11.74) --
	(250.52,-11.74) --
	(247.70,-11.74) --
	(244.88,-11.74) --
	(242.06,-11.74) --
	(239.24,-11.74) --
	(236.42,-11.74) --
	(233.60,-11.74) --
	(230.78,-11.74) --
	(227.96,-11.74) --
	(225.14,-11.74) --
	(222.32,-11.74) --
	(219.50,-11.74) --
	(216.68,-11.74) --
	(213.86,-11.74) --
	(211.04,-11.74) --
	(208.22,-11.74) --
	(205.40,-11.74) --
	(202.58,-11.74) --
	(199.76,-11.74) --
	(196.94,-11.74) --
	(194.12,-11.74) --
	(191.30,-11.74) --
	(188.48,-11.74) --
	(185.66,-11.74) --
	(182.84,-11.74) --
	(180.02,-11.74) --
	(177.20,-11.74) --
	(174.38,-11.74) --
	(171.56,-11.74) --
	(168.74,-11.74) --
	(165.92,-11.74) --
	(163.10,-11.74) --
	(160.28,-11.74) --
	(157.46,-11.74) --
	(154.64,-11.74) --
	(151.82,-11.74) --
	(149.00,-11.74) --
	(146.18,-11.74) --
	(143.36,-11.74) --
	(140.54,-11.74) --
	(137.72,-11.74) --
	(134.90,-11.74) --
	(132.08,-11.74) --
	(129.26,-11.74) --
	(126.44,-11.74) --
	(123.62,-11.74) --
	(120.80,-11.74) --
	(117.98,-11.74) --
	(115.16,-11.74) --
	(112.34,-11.74) --
	(109.52,-11.74) --
	(106.70,-11.74) --
	(103.88,-11.74) --
	(101.06,-11.74) --
	( 98.24,-11.74) --
	( 95.42,-11.74) --
	( 92.60,-11.74) --
	( 89.78,-11.74) --
	( 86.96,-11.74) --
	( 84.14,-11.74) --
	( 81.32,-11.74) --
	( 78.50,-11.74) --
	( 75.68,-11.74) --
	( 72.86,-11.74) --
	( 70.04,-11.74) --
	( 67.22,-11.74) --
	( 64.40,-11.74) --
	( 61.58,-11.74) --
	( 58.76,-11.74) --
	( 55.94,-11.74) --
	( 53.12,-11.74) --
	( 50.30,-11.74) --
	( 47.48,-11.74) --
	( 44.66,-11.74) --
	( 41.84,-11.74) --
	( 39.02,-11.74) --
	( 36.20,-11.74) --
	( 33.38,-11.74) --
	( 30.56,-11.74) --
	( 27.74,-11.74) --
	( 24.92,-11.74) --
	( 22.10,-11.74) --
	( 19.28,-11.74) --
	( 16.46,-11.74) --
	( 13.64,-11.74) --
	( 10.82,-11.74) --
	(  8.00,-11.74) --
	(  5.18,-11.74) --
	(  2.36,-11.74) --
	( -0.46,-11.74) --
	( -3.28,-11.74) --
	( -6.10,-11.74) --
	( -8.92,-11.74) --
	(-11.74,-11.74) --
	cycle;

\path[] (-11.74,267.44) --
	( -8.92,267.44) --
	( -6.10,261.97) --
	( -3.28,256.61) --
	( -0.46,251.37) --
	(  2.36,246.25) --
	(  5.18,241.23) --
	(  8.00,236.33) --
	( 10.82,231.55) --
	( 13.64,226.88) --
	( 16.46,222.32) --
	( 19.28,217.88) --
	( 22.10,213.55) --
	( 24.92,209.33) --
	( 27.74,205.23) --
	( 30.56,201.24) --
	( 33.38,197.37) --
	( 36.20,193.61) --
	( 39.02,189.96) --
	( 41.84,186.43) --
	( 44.66,183.01) --
	( 47.48,179.71) --
	( 50.30,176.51) --
	( 53.12,173.44) --
	( 55.94,170.48) --
	( 58.76,167.63) --
	( 61.58,164.89) --
	( 64.40,162.27) --
	( 67.22,159.77) --
	( 70.04,157.37) --
	( 72.86,155.09) --
	( 75.68,152.93) --
	( 78.50,150.88) --
	( 81.32,148.94) --
	( 84.14,147.12) --
	( 86.96,145.41) --
	( 89.78,143.81) --
	( 92.60,142.33) --
	( 95.42,140.97) --
	( 98.24,139.71) --
	(101.06,138.57) --
	(103.88,137.55) --
	(106.70,136.64) --
	(109.52,135.84) --
	(112.34,135.15) --
	(115.16,134.58) --
	(117.98,134.13) --
	(120.80,133.79) --
	(123.62,133.56) --
	(126.44,133.45) --
	(129.26,133.45) --
	(132.08,133.56) --
	(134.90,133.79) --
	(137.72,134.13) --
	(140.54,134.58) --
	(143.36,135.15) --
	(146.18,135.84) --
	(149.00,136.64) --
	(151.82,137.55) --
	(154.64,138.57) --
	(157.46,139.71) --
	(160.28,140.97) --
	(163.10,142.33) --
	(165.92,143.81) --
	(168.74,145.41) --
	(171.56,147.12) --
	(174.38,148.94) --
	(177.20,150.88) --
	(180.02,152.93) --
	(182.84,155.09) --
	(185.66,157.37) --
	(188.48,159.77) --
	(191.30,162.27) --
	(194.12,164.89) --
	(196.94,167.63) --
	(199.76,170.48) --
	(202.58,173.44) --
	(205.40,176.51) --
	(208.22,179.71) --
	(211.04,183.01) --
	(213.86,186.43) --
	(216.68,189.96) --
	(219.50,193.61) --
	(222.32,197.37) --
	(225.14,201.24) --
	(227.96,205.23) --
	(230.78,209.33) --
	(233.60,213.55) --
	(236.42,217.88) --
	(239.24,222.32) --
	(242.06,226.88) --
	(244.88,231.55) --
	(247.70,236.33) --
	(250.52,241.23) --
	(253.34,246.25) --
	(256.16,251.37) --
	(258.98,256.61) --
	(261.80,261.97) --
	(264.62,267.44) --
	(267.44,267.44);

\path[] (267.44,-11.74) --
	(264.62,-11.74) --
	(261.80,-11.74) --
	(258.98,-11.74) --
	(256.16,-11.74) --
	(253.34,-11.74) --
	(250.52,-11.74) --
	(247.70,-11.74) --
	(244.88,-11.74) --
	(242.06,-11.74) --
	(239.24,-11.74) --
	(236.42,-11.74) --
	(233.60,-11.74) --
	(230.78,-11.74) --
	(227.96,-11.74) --
	(225.14,-11.74) --
	(222.32,-11.74) --
	(219.50,-11.74) --
	(216.68,-11.74) --
	(213.86,-11.74) --
	(211.04,-11.74) --
	(208.22,-11.74) --
	(205.40,-11.74) --
	(202.58,-11.74) --
	(199.76,-11.74) --
	(196.94,-11.74) --
	(194.12,-11.74) --
	(191.30,-11.74) --
	(188.48,-11.74) --
	(185.66,-11.74) --
	(182.84,-11.74) --
	(180.02,-11.74) --
	(177.20,-11.74) --
	(174.38,-11.74) --
	(171.56,-11.74) --
	(168.74,-11.74) --
	(165.92,-11.74) --
	(163.10,-11.74) --
	(160.28,-11.74) --
	(157.46,-11.74) --
	(154.64,-11.74) --
	(151.82,-11.74) --
	(149.00,-11.74) --
	(146.18,-11.74) --
	(143.36,-11.74) --
	(140.54,-11.74) --
	(137.72,-11.74) --
	(134.90,-11.74) --
	(132.08,-11.74) --
	(129.26,-11.74) --
	(126.44,-11.74) --
	(123.62,-11.74) --
	(120.80,-11.74) --
	(117.98,-11.74) --
	(115.16,-11.74) --
	(112.34,-11.74) --
	(109.52,-11.74) --
	(106.70,-11.74) --
	(103.88,-11.74) --
	(101.06,-11.74) --
	( 98.24,-11.74) --
	( 95.42,-11.74) --
	( 92.60,-11.74) --
	( 89.78,-11.74) --
	( 86.96,-11.74) --
	( 84.14,-11.74) --
	( 81.32,-11.74) --
	( 78.50,-11.74) --
	( 75.68,-11.74) --
	( 72.86,-11.74) --
	( 70.04,-11.74) --
	( 67.22,-11.74) --
	( 64.40,-11.74) --
	( 61.58,-11.74) --
	( 58.76,-11.74) --
	( 55.94,-11.74) --
	( 53.12,-11.74) --
	( 50.30,-11.74) --
	( 47.48,-11.74) --
	( 44.66,-11.74) --
	( 41.84,-11.74) --
	( 39.02,-11.74) --
	( 36.20,-11.74) --
	( 33.38,-11.74) --
	( 30.56,-11.74) --
	( 27.74,-11.74) --
	( 24.92,-11.74) --
	( 22.10,-11.74) --
	( 19.28,-11.74) --
	( 16.46,-11.74) --
	( 13.64,-11.74) --
	( 10.82,-11.74) --
	(  8.00,-11.74) --
	(  5.18,-11.74) --
	(  2.36,-11.74) --
	( -0.46,-11.74) --
	( -3.28,-11.74) --
	( -6.10,-11.74) --
	( -8.92,-11.74) --
	(-11.74,-11.74);
\node[text=fillColor,inner sep=0pt,anchor=west, outer sep=0pt, scale=  2.5] at ( 80.00,215.00) {$U_1=0$};
 \node[text=MyOrange,inner sep=0pt,anchor=west, outer sep=0pt, scale=  2.5] at ( 80.00,175.00) {$U_2=0$};
\end{scope}
\end{tikzpicture}
\begin{tikzpicture}[x=1pt,y=1pt]
\definecolor{fillColor}{RGB}{255,255,255}
\definecolor{MyOrange}{rgb}{0.8, 0.33, 0}
\path[use as bounding box,fill=fillColor,fill opacity=0.00] (0,0) rectangle (252.94,252.94);
\begin{scope}
\path[clip] (  0.00,  0.00) rectangle (252.94,252.94);
\definecolor{fillColor}{RGB}{69,33,84}

\path[fill=fillColor] (-11.74,238.12) --
	( -8.92,232.54) --
	( -6.10,227.07) --
	( -3.28,221.72) --
	( -0.46,216.48) --
	(  2.36,211.35) --
	(  5.18,206.34) --
	(  8.00,201.44) --
	( 10.82,196.65) --
	( 13.64,191.98) --
	( 16.46,187.42) --
	( 19.28,182.98) --
	( 22.10,178.65) --
	( 24.92,174.43) --
	( 27.74,170.33) --
	( 30.56,166.34) --
	( 33.38,162.47) --
	( 36.20,158.71) --
	( 39.02,155.06) --
	( 41.84,151.53) --
	( 44.66,148.11) --
	( 47.48,144.81) --
	( 50.30,141.62) --
	( 53.12,138.54) --
	( 55.94,135.58) --
	( 58.76,132.73) --
	( 61.58,130.00) --
	( 64.40,127.37) --
	( 67.22,124.87) --
	( 70.04,122.48) --
	( 72.86,120.20) --
	( 75.68,118.03) --
	( 78.50,115.98) --
	( 81.32,114.04) --
	( 84.14,112.22) --
	( 86.96,110.51) --
	( 89.78,108.92) --
	( 92.60,107.43) --
	( 95.42,106.07) --
	( 98.24,104.81) --
	(101.06,103.67) --
	(103.88,102.65) --
	(106.70,101.74) --
	(109.52,100.94) --
	(112.34,100.26) --
	(115.16, 99.69) --
	(117.98, 99.23) --
	(120.80, 98.89) --
	(123.62, 98.66) --
	(126.44, 98.55) --
	(129.26, 98.55) --
	(132.08, 98.66) --
	(134.90, 98.89) --
	(137.72, 99.23) --
	(140.54, 99.69) --
	(143.36,100.26) --
	(146.18,100.94) --
	(149.00,101.74) --
	(151.82,102.65) --
	(154.64,103.67) --
	(157.46,104.81) --
	(160.28,106.07) --
	(163.10,107.43) --
	(165.92,108.92) --
	(168.74,110.51) --
	(171.56,112.22) --
	(174.38,114.04) --
	(177.20,115.98) --
	(180.02,118.03) --
	(182.84,120.20) --
	(185.66,122.48) --
	(188.48,124.87) --
	(191.30,127.37) --
	(194.12,130.00) --
	(196.94,132.73) --
	(199.76,135.58) --
	(202.58,138.54) --
	(205.40,141.62) --
	(208.22,144.81) --
	(211.04,148.11) --
	(213.86,151.53) --
	(216.68,155.06) --
	(219.50,158.71) --
	(222.32,162.47) --
	(225.14,166.34) --
	(227.96,170.33) --
	(230.78,174.43) --
	(233.60,178.65) --
	(236.42,182.98) --
	(239.24,187.42) --
	(242.06,191.98) --
	(244.88,196.65) --
	(247.70,201.44) --
	(250.52,206.34) --
	(253.34,211.35) --
	(256.16,216.48) --
	(258.98,221.72) --
	(261.80,227.07) --
	(264.62,232.54) --
	(267.44,238.12) --
	(267.44,-11.74) --
	(264.62,-11.74) --
	(261.80,-11.74) --
	(258.98,-11.74) --
	(256.16,-11.74) --
	(253.34,-11.74) --
	(250.52,-11.74) --
	(247.70,-11.74) --
	(244.88,-11.74) --
	(242.06,-11.74) --
	(239.24,-11.74) --
	(236.42,-11.74) --
	(233.60,-11.74) --
	(230.78,-11.74) --
	(227.96,-11.74) --
	(225.14,-11.74) --
	(222.32,-11.74) --
	(219.50,-11.74) --
	(216.68,-11.74) --
	(213.86,-11.74) --
	(211.04,-11.74) --
	(208.22,-11.74) --
	(205.40,-11.74) --
	(202.58,-11.74) --
	(199.76,-11.74) --
	(196.94,-11.74) --
	(194.12,-11.74) --
	(191.30,-11.74) --
	(188.48,-11.74) --
	(185.66,-11.74) --
	(182.84,-11.74) --
	(180.02,-11.74) --
	(177.20,-11.74) --
	(174.38,-11.74) --
	(171.56,-11.74) --
	(168.74,-11.74) --
	(165.92,-11.74) --
	(163.10,-11.74) --
	(160.28,-11.74) --
	(157.46,-11.74) --
	(154.64,-11.74) --
	(151.82,-11.74) --
	(149.00,-11.74) --
	(146.18,-11.74) --
	(143.36,-11.74) --
	(140.54,-11.74) --
	(137.72,-11.74) --
	(134.90,-11.74) --
	(132.08,-11.74) --
	(129.26,-11.74) --
	(126.44,-11.74) --
	(123.62,-11.74) --
	(120.80,-11.74) --
	(117.98,-11.74) --
	(115.16,-11.74) --
	(112.34,-11.74) --
	(109.52,-11.74) --
	(106.70,-11.74) --
	(103.88,-11.74) --
	(101.06,-11.74) --
	( 98.24,-11.74) --
	( 95.42,-11.74) --
	( 92.60,-11.74) --
	( 89.78,-11.74) --
	( 86.96,-11.74) --
	( 84.14,-11.74) --
	( 81.32,-11.74) --
	( 78.50,-11.74) --
	( 75.68,-11.74) --
	( 72.86,-11.74) --
	( 70.04,-11.74) --
	( 67.22,-11.74) --
	( 64.40,-11.74) --
	( 61.58,-11.74) --
	( 58.76,-11.74) --
	( 55.94,-11.74) --
	( 53.12,-11.74) --
	( 50.30,-11.74) --
	( 47.48,-11.74) --
	( 44.66,-11.74) --
	( 41.84,-11.74) --
	( 39.02,-11.74) --
	( 36.20,-11.74) --
	( 33.38,-11.74) --
	( 30.56,-11.74) --
	( 27.74,-11.74) --
	( 24.92,-11.74) --
	( 22.10,-11.74) --
	( 19.28,-11.74) --
	( 16.46,-11.74) --
	( 13.64,-11.74) --
	( 10.82,-11.74) --
	(  8.00,-11.74) --
	(  5.18,-11.74) --
	(  2.36,-11.74) --
	( -0.46,-11.74) --
	( -3.28,-11.74) --
	( -6.10,-11.74) --
	( -8.92,-11.74) --
	(-11.74,-11.74) --
	cycle;

\path[] (-11.74,238.12) --
	( -8.92,232.54) --
	( -6.10,227.07) --
	( -3.28,221.72) --
	( -0.46,216.48) --
	(  2.36,211.35) --
	(  5.18,206.34) --
	(  8.00,201.44) --
	( 10.82,196.65) --
	( 13.64,191.98) --
	( 16.46,187.42) --
	( 19.28,182.98) --
	( 22.10,178.65) --
	( 24.92,174.43) --
	( 27.74,170.33) --
	( 30.56,166.34) --
	( 33.38,162.47) --
	( 36.20,158.71) --
	( 39.02,155.06) --
	( 41.84,151.53) --
	( 44.66,148.11) --
	( 47.48,144.81) --
	( 50.30,141.62) --
	( 53.12,138.54) --
	( 55.94,135.58) --
	( 58.76,132.73) --
	( 61.58,130.00) --
	( 64.40,127.37) --
	( 67.22,124.87) --
	( 70.04,122.48) --
	( 72.86,120.20) --
	( 75.68,118.03) --
	( 78.50,115.98) --
	( 81.32,114.04) --
	( 84.14,112.22) --
	( 86.96,110.51) --
	( 89.78,108.92) --
	( 92.60,107.43) --
	( 95.42,106.07) --
	( 98.24,104.81) --
	(101.06,103.67) --
	(103.88,102.65) --
	(106.70,101.74) --
	(109.52,100.94) --
	(112.34,100.26) --
	(115.16, 99.69) --
	(117.98, 99.23) --
	(120.80, 98.89) --
	(123.62, 98.66) --
	(126.44, 98.55) --
	(129.26, 98.55) --
	(132.08, 98.66) --
	(134.90, 98.89) --
	(137.72, 99.23) --
	(140.54, 99.69) --
	(143.36,100.26) --
	(146.18,100.94) --
	(149.00,101.74) --
	(151.82,102.65) --
	(154.64,103.67) --
	(157.46,104.81) --
	(160.28,106.07) --
	(163.10,107.43) --
	(165.92,108.92) --
	(168.74,110.51) --
	(171.56,112.22) --
	(174.38,114.04) --
	(177.20,115.98) --
	(180.02,118.03) --
	(182.84,120.20) --
	(185.66,122.48) --
	(188.48,124.87) --
	(191.30,127.37) --
	(194.12,130.00) --
	(196.94,132.73) --
	(199.76,135.58) --
	(202.58,138.54) --
	(205.40,141.62) --
	(208.22,144.81) --
	(211.04,148.11) --
	(213.86,151.53) --
	(216.68,155.06) --
	(219.50,158.71) --
	(222.32,162.47) --
	(225.14,166.34) --
	(227.96,170.33) --
	(230.78,174.43) --
	(233.60,178.65) --
	(236.42,182.98) --
	(239.24,187.42) --
	(242.06,191.98) --
	(244.88,196.65) --
	(247.70,201.44) --
	(250.52,206.34) --
	(253.34,211.35) --
	(256.16,216.48) --
	(258.98,221.72) --
	(261.80,227.07) --
	(264.62,232.54) --
	(267.44,238.12);

\path[] (267.44,-11.74) --
	(264.62,-11.74) --
	(261.80,-11.74) --
	(258.98,-11.74) --
	(256.16,-11.74) --
	(253.34,-11.74) --
	(250.52,-11.74) --
	(247.70,-11.74) --
	(244.88,-11.74) --
	(242.06,-11.74) --
	(239.24,-11.74) --
	(236.42,-11.74) --
	(233.60,-11.74) --
	(230.78,-11.74) --
	(227.96,-11.74) --
	(225.14,-11.74) --
	(222.32,-11.74) --
	(219.50,-11.74) --
	(216.68,-11.74) --
	(213.86,-11.74) --
	(211.04,-11.74) --
	(208.22,-11.74) --
	(205.40,-11.74) --
	(202.58,-11.74) --
	(199.76,-11.74) --
	(196.94,-11.74) --
	(194.12,-11.74) --
	(191.30,-11.74) --
	(188.48,-11.74) --
	(185.66,-11.74) --
	(182.84,-11.74) --
	(180.02,-11.74) --
	(177.20,-11.74) --
	(174.38,-11.74) --
	(171.56,-11.74) --
	(168.74,-11.74) --
	(165.92,-11.74) --
	(163.10,-11.74) --
	(160.28,-11.74) --
	(157.46,-11.74) --
	(154.64,-11.74) --
	(151.82,-11.74) --
	(149.00,-11.74) --
	(146.18,-11.74) --
	(143.36,-11.74) --
	(140.54,-11.74) --
	(137.72,-11.74) --
	(134.90,-11.74) --
	(132.08,-11.74) --
	(129.26,-11.74) --
	(126.44,-11.74) --
	(123.62,-11.74) --
	(120.80,-11.74) --
	(117.98,-11.74) --
	(115.16,-11.74) --
	(112.34,-11.74) --
	(109.52,-11.74) --
	(106.70,-11.74) --
	(103.88,-11.74) --
	(101.06,-11.74) --
	( 98.24,-11.74) --
	( 95.42,-11.74) --
	( 92.60,-11.74) --
	( 89.78,-11.74) --
	( 86.96,-11.74) --
	( 84.14,-11.74) --
	( 81.32,-11.74) --
	( 78.50,-11.74) --
	( 75.68,-11.74) --
	( 72.86,-11.74) --
	( 70.04,-11.74) --
	( 67.22,-11.74) --
	( 64.40,-11.74) --
	( 61.58,-11.74) --
	( 58.76,-11.74) --
	( 55.94,-11.74) --
	( 53.12,-11.74) --
	( 50.30,-11.74) --
	( 47.48,-11.74) --
	( 44.66,-11.74) --
	( 41.84,-11.74) --
	( 39.02,-11.74) --
	( 36.20,-11.74) --
	( 33.38,-11.74) --
	( 30.56,-11.74) --
	( 27.74,-11.74) --
	( 24.92,-11.74) --
	( 22.10,-11.74) --
	( 19.28,-11.74) --
	( 16.46,-11.74) --
	( 13.64,-11.74) --
	( 10.82,-11.74) --
	(  8.00,-11.74) --
	(  5.18,-11.74) --
	(  2.36,-11.74) --
	( -0.46,-11.74) --
	( -3.28,-11.74) --
	( -6.10,-11.74) --
	( -8.92,-11.74) --
	(-11.74,-11.74);
 \node[text=fillColor,inner sep=0pt,anchor=west, outer sep=0pt, scale=  2.5] at ( 80.00,215.00) {$U_1=0$};
 \node[text=MyOrange,inner sep=0pt,anchor=west, outer sep=0pt, scale=  2.5] at ( 80.00,175.00) {$U_2=2.5$};
\end{scope}
\end{tikzpicture}
\begin{tikzpicture}[x=1pt,y=1pt]
\definecolor{fillColor}{RGB}{255,255,255}
\definecolor{MyOrange}{rgb}{0.8, 0.33, 0}
\path[use as bounding box,fill=fillColor,fill opacity=0.00] (0,0) rectangle (252.94,252.94);
\begin{scope}
\path[clip] (  0.00,  0.00) rectangle (252.94,252.94);
\definecolor{fillColor}{RGB}{69,33,84}

\path[fill=fillColor] (-11.74,133.43) --
	( -8.92,127.85) --
	( -6.10,122.38) --
	( -3.28,117.02) --
	( -0.46,111.78) --
	(  2.36,106.66) --
	(  5.18,101.64) --
	(  8.00, 96.74) --
	( 10.82, 91.96) --
	( 13.64, 87.29) --
	( 16.46, 82.73) --
	( 19.28, 78.28) --
	( 22.10, 73.95) --
	( 24.92, 69.74) --
	( 27.74, 65.64) --
	( 30.56, 61.65) --
	( 33.38, 57.77) --
	( 36.20, 54.01) --
	( 39.02, 50.37) --
	( 41.84, 46.84) --
	( 44.66, 43.42) --
	( 47.48, 40.11) --
	( 50.30, 36.92) --
	( 53.12, 33.85) --
	( 55.94, 30.88) --
	( 58.76, 28.04) --
	( 61.58, 25.30) --
	( 64.40, 22.68) --
	( 67.22, 20.17) --
	( 70.04, 17.78) --
	( 72.86, 15.50) --
	( 75.68, 13.34) --
	( 78.50, 11.29) --
	( 81.32,  9.35) --
	( 84.14,  7.53) --
	( 86.96,  5.82) --
	( 89.78,  4.22) --
	( 92.60,  2.74) --
	( 95.42,  1.37) --
	( 98.24,  0.12) --
	(101.06, -1.02) --
	(103.88, -2.04) --
	(106.70, -2.96) --
	(109.52, -3.75) --
	(112.34, -4.44) --
	(115.16, -5.01) --
	(117.98, -5.46) --
	(120.80, -5.80) --
	(123.62, -6.03) --
	(126.44, -6.15) --
	(129.26, -6.15) --
	(132.08, -6.03) --
	(134.90, -5.80) --
	(137.72, -5.46) --
	(140.54, -5.01) --
	(143.36, -4.44) --
	(146.18, -3.75) --
	(149.00, -2.96) --
	(151.82, -2.04) --
	(154.64, -1.02) --
	(157.46,  0.12) --
	(160.28,  1.37) --
	(163.10,  2.74) --
	(165.92,  4.22) --
	(168.74,  5.82) --
	(171.56,  7.53) --
	(174.38,  9.35) --
	(177.20, 11.29) --
	(180.02, 13.34) --
	(182.84, 15.50) --
	(185.66, 17.78) --
	(188.48, 20.17) --
	(191.30, 22.68) --
	(194.12, 25.30) --
	(196.94, 28.04) --
	(199.76, 30.88) --
	(202.58, 33.85) --
	(205.40, 36.92) --
	(208.22, 40.11) --
	(211.04, 43.42) --
	(213.86, 46.84) --
	(216.68, 50.37) --
	(219.50, 54.01) --
	(222.32, 57.77) --
	(225.14, 61.65) --
	(227.96, 65.64) --
	(230.78, 69.74) --
	(233.60, 73.95) --
	(236.42, 78.28) --
	(239.24, 82.73) --
	(242.06, 87.29) --
	(244.88, 91.96) --
	(247.70, 96.74) --
	(250.52,101.64) --
	(253.34,106.66) --
	(256.16,111.78) --
	(258.98,117.02) --
	(261.80,122.38) --
	(264.62,127.85) --
	(267.44,133.43) --
	(267.44,-11.74) --
	(264.62,-11.74) --
	(261.80,-11.74) --
	(258.98,-11.74) --
	(256.16,-11.74) --
	(253.34,-11.74) --
	(250.52,-11.74) --
	(247.70,-11.74) --
	(244.88,-11.74) --
	(242.06,-11.74) --
	(239.24,-11.74) --
	(236.42,-11.74) --
	(233.60,-11.74) --
	(230.78,-11.74) --
	(227.96,-11.74) --
	(225.14,-11.74) --
	(222.32,-11.74) --
	(219.50,-11.74) --
	(216.68,-11.74) --
	(213.86,-11.74) --
	(211.04,-11.74) --
	(208.22,-11.74) --
	(205.40,-11.74) --
	(202.58,-11.74) --
	(199.76,-11.74) --
	(196.94,-11.74) --
	(194.12,-11.74) --
	(191.30,-11.74) --
	(188.48,-11.74) --
	(185.66,-11.74) --
	(182.84,-11.74) --
	(180.02,-11.74) --
	(177.20,-11.74) --
	(174.38,-11.74) --
	(171.56,-11.74) --
	(168.74,-11.74) --
	(165.92,-11.74) --
	(163.10,-11.74) --
	(160.28,-11.74) --
	(157.46,-11.74) --
	(154.64,-11.74) --
	(151.82,-11.74) --
	(149.00,-11.74) --
	(146.18,-11.74) --
	(143.36,-11.74) --
	(140.54,-11.74) --
	(137.72,-11.74) --
	(134.90,-11.74) --
	(132.08,-11.74) --
	(129.26,-11.74) --
	(126.44,-11.74) --
	(123.62,-11.74) --
	(120.80,-11.74) --
	(117.98,-11.74) --
	(115.16,-11.74) --
	(112.34,-11.74) --
	(109.52,-11.74) --
	(106.70,-11.74) --
	(103.88,-11.74) --
	(101.06,-11.74) --
	( 98.24,-11.74) --
	( 95.42,-11.74) --
	( 92.60,-11.74) --
	( 89.78,-11.74) --
	( 86.96,-11.74) --
	( 84.14,-11.74) --
	( 81.32,-11.74) --
	( 78.50,-11.74) --
	( 75.68,-11.74) --
	( 72.86,-11.74) --
	( 70.04,-11.74) --
	( 67.22,-11.74) --
	( 64.40,-11.74) --
	( 61.58,-11.74) --
	( 58.76,-11.74) --
	( 55.94,-11.74) --
	( 53.12,-11.74) --
	( 50.30,-11.74) --
	( 47.48,-11.74) --
	( 44.66,-11.74) --
	( 41.84,-11.74) --
	( 39.02,-11.74) --
	( 36.20,-11.74) --
	( 33.38,-11.74) --
	( 30.56,-11.74) --
	( 27.74,-11.74) --
	( 24.92,-11.74) --
	( 22.10,-11.74) --
	( 19.28,-11.74) --
	( 16.46,-11.74) --
	( 13.64,-11.74) --
	( 10.82,-11.74) --
	(  8.00,-11.74) --
	(  5.18,-11.74) --
	(  2.36,-11.74) --
	( -0.46,-11.74) --
	( -3.28,-11.74) --
	( -6.10,-11.74) --
	( -8.92,-11.74) --
	(-11.74,-11.74) --
	cycle;

\path[] (-11.74,133.43) --
	( -8.92,127.85) --
	( -6.10,122.38) --
	( -3.28,117.02) --
	( -0.46,111.78) --
	(  2.36,106.66) --
	(  5.18,101.64) --
	(  8.00, 96.74) --
	( 10.82, 91.96) --
	( 13.64, 87.29) --
	( 16.46, 82.73) --
	( 19.28, 78.28) --
	( 22.10, 73.95) --
	( 24.92, 69.74) --
	( 27.74, 65.64) --
	( 30.56, 61.65) --
	( 33.38, 57.77) --
	( 36.20, 54.01) --
	( 39.02, 50.37) --
	( 41.84, 46.84) --
	( 44.66, 43.42) --
	( 47.48, 40.11) --
	( 50.30, 36.92) --
	( 53.12, 33.85) --
	( 55.94, 30.88) --
	( 58.76, 28.04) --
	( 61.58, 25.30) --
	( 64.40, 22.68) --
	( 67.22, 20.17) --
	( 70.04, 17.78) --
	( 72.86, 15.50) --
	( 75.68, 13.34) --
	( 78.50, 11.29) --
	( 81.32,  9.35) --
	( 84.14,  7.53) --
	( 86.96,  5.82) --
	( 89.78,  4.22) --
	( 92.60,  2.74) --
	( 95.42,  1.37) --
	( 98.24,  0.12) --
	(101.06, -1.02) --
	(103.88, -2.04) --
	(106.70, -2.96) --
	(109.52, -3.75) --
	(112.34, -4.44) --
	(115.16, -5.01) --
	(117.98, -5.46) --
	(120.80, -5.80) --
	(123.62, -6.03) --
	(126.44, -6.15) --
	(129.26, -6.15) --
	(132.08, -6.03) --
	(134.90, -5.80) --
	(137.72, -5.46) --
	(140.54, -5.01) --
	(143.36, -4.44) --
	(146.18, -3.75) --
	(149.00, -2.96) --
	(151.82, -2.04) --
	(154.64, -1.02) --
	(157.46,  0.12) --
	(160.28,  1.37) --
	(163.10,  2.74) --
	(165.92,  4.22) --
	(168.74,  5.82) --
	(171.56,  7.53) --
	(174.38,  9.35) --
	(177.20, 11.29) --
	(180.02, 13.34) --
	(182.84, 15.50) --
	(185.66, 17.78) --
	(188.48, 20.17) --
	(191.30, 22.68) --
	(194.12, 25.30) --
	(196.94, 28.04) --
	(199.76, 30.88) --
	(202.58, 33.85) --
	(205.40, 36.92) --
	(208.22, 40.11) --
	(211.04, 43.42) --
	(213.86, 46.84) --
	(216.68, 50.37) --
	(219.50, 54.01) --
	(222.32, 57.77) --
	(225.14, 61.65) --
	(227.96, 65.64) --
	(230.78, 69.74) --
	(233.60, 73.95) --
	(236.42, 78.28) --
	(239.24, 82.73) --
	(242.06, 87.29) --
	(244.88, 91.96) --
	(247.70, 96.74) --
	(250.52,101.64) --
	(253.34,106.66) --
	(256.16,111.78) --
	(258.98,117.02) --
	(261.80,122.38) --
	(264.62,127.85) --
	(267.44,133.43);

\path[] (267.44,-11.74) --
	(264.62,-11.74) --
	(261.80,-11.74) --
	(258.98,-11.74) --
	(256.16,-11.74) --
	(253.34,-11.74) --
	(250.52,-11.74) --
	(247.70,-11.74) --
	(244.88,-11.74) --
	(242.06,-11.74) --
	(239.24,-11.74) --
	(236.42,-11.74) --
	(233.60,-11.74) --
	(230.78,-11.74) --
	(227.96,-11.74) --
	(225.14,-11.74) --
	(222.32,-11.74) --
	(219.50,-11.74) --
	(216.68,-11.74) --
	(213.86,-11.74) --
	(211.04,-11.74) --
	(208.22,-11.74) --
	(205.40,-11.74) --
	(202.58,-11.74) --
	(199.76,-11.74) --
	(196.94,-11.74) --
	(194.12,-11.74) --
	(191.30,-11.74) --
	(188.48,-11.74) --
	(185.66,-11.74) --
	(182.84,-11.74) --
	(180.02,-11.74) --
	(177.20,-11.74) --
	(174.38,-11.74) --
	(171.56,-11.74) --
	(168.74,-11.74) --
	(165.92,-11.74) --
	(163.10,-11.74) --
	(160.28,-11.74) --
	(157.46,-11.74) --
	(154.64,-11.74) --
	(151.82,-11.74) --
	(149.00,-11.74) --
	(146.18,-11.74) --
	(143.36,-11.74) --
	(140.54,-11.74) --
	(137.72,-11.74) --
	(134.90,-11.74) --
	(132.08,-11.74) --
	(129.26,-11.74) --
	(126.44,-11.74) --
	(123.62,-11.74) --
	(120.80,-11.74) --
	(117.98,-11.74) --
	(115.16,-11.74) --
	(112.34,-11.74) --
	(109.52,-11.74) --
	(106.70,-11.74) --
	(103.88,-11.74) --
	(101.06,-11.74) --
	( 98.24,-11.74) --
	( 95.42,-11.74) --
	( 92.60,-11.74) --
	( 89.78,-11.74) --
	( 86.96,-11.74) --
	( 84.14,-11.74) --
	( 81.32,-11.74) --
	( 78.50,-11.74) --
	( 75.68,-11.74) --
	( 72.86,-11.74) --
	( 70.04,-11.74) --
	( 67.22,-11.74) --
	( 64.40,-11.74) --
	( 61.58,-11.74) --
	( 58.76,-11.74) --
	( 55.94,-11.74) --
	( 53.12,-11.74) --
	( 50.30,-11.74) --
	( 47.48,-11.74) --
	( 44.66,-11.74) --
	( 41.84,-11.74) --
	( 39.02,-11.74) --
	( 36.20,-11.74) --
	( 33.38,-11.74) --
	( 30.56,-11.74) --
	( 27.74,-11.74) --
	( 24.92,-11.74) --
	( 22.10,-11.74) --
	( 19.28,-11.74) --
	( 16.46,-11.74) --
	( 13.64,-11.74) --
	( 10.82,-11.74) --
	(  8.00,-11.74) --
	(  5.18,-11.74) --
	(  2.36,-11.74) --
	( -0.46,-11.74) --
	( -3.28,-11.74) --
	( -6.10,-11.74) --
	( -8.92,-11.74) --
	(-11.74,-11.74);
\node[text=fillColor,inner sep=0pt,anchor=west, outer sep=0pt, scale=  2.5] at ( 80.00,215.00) {$U_1=0$};
 \node[text=MyOrange,inner sep=0pt,anchor=west, outer sep=0pt, scale=  2.5] at ( 80.00,175.00) {$U_2=5$};
\end{scope}
\end{tikzpicture}}
\caption{Excursion set of the constraint $g \leq 0$ for $U_1 \in \{-5,-2.5,0,2.5,5\}$ and $U_2=0$ (first row) and for $U_1 =0$ and $U_2 \in \{-5,-2.5,0,2.5,5\}$. (second row)}
\label{visualtestreda}
\end{figure}%
Visual conjecture (Figure \ref{visualtestreda}) shows that $u_2$ seems to induce more changes in $\Gamma_g$ than $u_1$. To verify this assumption, we compute the p-values and the indices $\hat{\hat{S}}^{\operatorname{H}_{set}}_{i}$, $\hat{\hat{S}}^{\operatorname{H}_{set}}_{T_i}$ for each input $U_i \sim \Udom([-5,5])$ for $i \in \{1,2,3\}$, where $U_3$ is an additional dummy input that does not appear in the function $g$. We first use $n=m=100$ and then $n=m=1000$. The hyperparameter $\sigma^2$ of the kernel $k_{set}$ is chosen to be equal to the empirical mean of $\lambda(\Gamma_g^{(i)}\Delta \Gamma_g^{(^j)})$ with $i>j$. We compute the indices for five characteristic ANOVA kernels. The first is the Sobolev kernel $K_{Sob}$ of order $1$, defined by:
\begin{equation*}
\label{Sobolevkernel}
    K_{Sob}(x,y) = 1+(x-\frac{1}{2})(y-\frac{1}{2})+\frac{1}{2}[(x-y)^2-|x-y| +\frac{1}{6}]. 
\end{equation*}
The four others are obtained using the transformation given in (\ref{kernelanovatransfo}) based on four classic kernels: the Gaussian kernel, the Laplace kernel and the Matérn $3/2$ and $5/2$.
For these kernels, the previous transformation is known analytically and can be found in the Appendix of \textcite{anova_kernel}).
The hyper-parameter is taken to be equal to the empirical standard deviation of the inputs. Since the inputs are not uniformly distributed on $[0,1]$, we apply the inverse of the cumulative distribution function, as suggested in \textcite{daveiga:hal-03108628}. We repeat the estimation 20 times over 20 independent samples. The results are shown  in Figure \ref{fig:reda100} for $n=m=100$ and in Figure \ref{fig:reda1000} for $n=m=1000$. The acceptance rates of the independence hypothesis are given in the table on the left side of the figure, and the variability of the indices is visualized in the boxplots on the right side.

\begin{figure}[ht]
  \begin{minipage}{0.3\textwidth} 
    \centering
    \begin{tabular}{cccc}
\toprule
Kernel & $U_1$ & $U_2$ & $U_3$ \\
\midrule
$K_{Sob}$ & $0$ & $0$ & $95$ \\
$K_{gauss}$ & $0$ & $0$ & $95$ \\
$K_{exp}$ & $0$ & $0$ & $95$ \\
$K_{3/2}$ & $0$ & $0$ & $95$ \\
$K_{5/2}$ & $0$ & $0$ & $90$ \\
\bottomrule
\end{tabular}
    \subcaption{Acceptance rates ($\%$) over $20$ independence tests with a risk of $5\%$}
    \label{tab:acceptance-rate-reda-100}
  \end{minipage}%
  \begin{minipage}{0.7\textwidth} 
    \centering
    \scalebox{0.9}{\input{Images/reda/hsic_anova_3_100_no_pvalue}}
    \subcaption{Estimations of $\hat{\hat{S}}^{\operatorname{H}_{set}}_{i}$ and $\hat{\hat{S}}^{\operatorname{H}_{set}}_{T_i}$}
    \label{fig:indices:reda:100}
  \end{minipage}
  \caption{Acceptance rates (\subref{tab:acceptance-rate-reda-100}) and estimation of $\hat{\hat{S}}^{\operatorname{H}_{set}}_{i}$ and $\hat{\hat{S}}^{\operatorname{H}_{set}}_{T_i}$ (\subref{fig:indices:reda:100})  for the excursion set $\Gamma_g$ computed for 5 kernels with $n=100$, $m=100$ and repeated 20 times}
  \label{fig:reda100}
\end{figure}

\begin{figure}[ht]
  \begin{minipage}{0.3\textwidth} 
    \centering
    \begin{tabular}{cccc}
\toprule
Kernel & $U_1$ & $U_2$ & $U_3$ \\
\midrule
$K_{Sob}$ & $0$ & $0$ & $95$ \\
$K_{gauss}$ & $0$ & $0$ & $95$ \\
$K_{exp}$ & $0$ & $0$ & $100$ \\
$K_{3/2}$ & $0$ & $0$ & $95$ \\
$K_{5/2}$ & $0$ & $0$ & $95$ \\
\bottomrule
\end{tabular}
    \subcaption{Acceptance rates ($\%$) over $20$ independence tests with a risk of $5\%$}
    \label{tab:acceptance-rate-reda-1000}
  \end{minipage}%
  \begin{minipage}{0.7\textwidth} 
    \centering
    \scalebox{0.9}{\input{Images/reda/hsic_anova_3_1000_no_pvalue}}
    \subcaption{Estimations of $\hat{\hat{S}}^{\operatorname{H}_{set}}_{i}$ and $\hat{\hat{S}}^{\operatorname{H}_{set}}_{T_i}$}
    \label{fig:indices:reda:1000}
  \end{minipage}
  \caption{Acceptance rates (\subref{tab:acceptance-rate-reda-1000}) and estimation of $\hat{\hat{S}}^{\operatorname{H}_{set}}_{i}$ and $\hat{\hat{S}}^{\operatorname{H}_{set}}_{T_i}$ (\subref{fig:indices:reda:1000})  for the excursion set $\Gamma_g$ computed for 5 kernels with $n=1000$, $m=1000$ and repeated 20 times}
  \label{fig:reda1000}
\end{figure}

With these results, we classify the inputs as influential or negligible (screening) and rank them by influence (ranking).
$100 \%$ of the p-values of $U_1$ and $U_2$ are below the threshold of $0.05$, so they are classified as influential. $U_3$ has a p-value greater than $0.05$ around $95\%$ of the time, so it is correctly identified as negligible by the independence test. The first-order and total-order indices give the expected results that $U_2$ has a greater influence than $U_1$ on the excursion sets. More precisely, it shows that $U_2$ alone explains about $70\%$ of $\operatorname{HSIC}(\bm U,\Gamma_g)$ (depending on the input kernel), while $U_1$ explains about $25\%$. The artificial input $U_3$ is responsible for $0\%$, as expected. The rest is some kind of interaction between $U_1$ and $U_2$. This is confirmed by looking at the total-order indices, whose values are a bit greater than the first-order. However, this part of the interaction is not to be interpreted as in the case of the Sobol' indices. It is actually an open question to interpret what kind of interactions are detected by the HSIC-ANOVA indices. This is studied in \textcite{sarazin:cea-04320711} in the case of Sobolev kernels.
The effect of the input kernel on the HSIC-ANOVA indices is also an open question, and in this toy case we can say that the input kernel has a non-negligible effect on the estimates of the p-values and the indices. However, in this case it does not change the results of the screening and the ranking. It also seems that the Sobolev kernel behaves differently from the other four kernels. 

Taking $n=m=1000$ in Figure \ref{fig:reda1000} reduces the variance of the indices, but was not necessary to screen and rank the inputs. To support this comment, we compute the relative quadratic risk of the estimator $\widehat{\widehat{\operatorname{HSIC}}}(U_1,\Gamma_g)$ defined by:
$$\mathcal R(\widehat{\widehat{\operatorname{HSIC}}}(U_1,\Gamma_g)) = \Esp \left(\frac{\widehat{\widehat{\operatorname{HSIC}}}\left(U_1, \Gamma_g\right) -  \operatorname{HSIC}(U_1, \Gamma_g)}{\operatorname{HSIC}(U_1, \Gamma_g)} \right)^2.  $$
 We also compute the associated upper bound given in Proposition \ref{prop nested} and the upper bound for a classic NMC estimator (with independent $\bm X^{(k)}_{i,j}$ for all $1 \leq i,j \leq n$, see equation \ref{real_nested_estimator}). The "true" value of HSIC, used to compute the quadratic risk, and the constants $\sigma_1$, $\sigma_2$, and $\sigma_3$ are computed for $n=m=3000$. The Sobolev kernel is used as the input kernel. We plot the risk and the two bounds in the Figure \ref{quadratic_risk} from $n=m=30$ to $n=m=500$. 
\begin{figure}
    \centering
    \scalebox{0.83}{\input{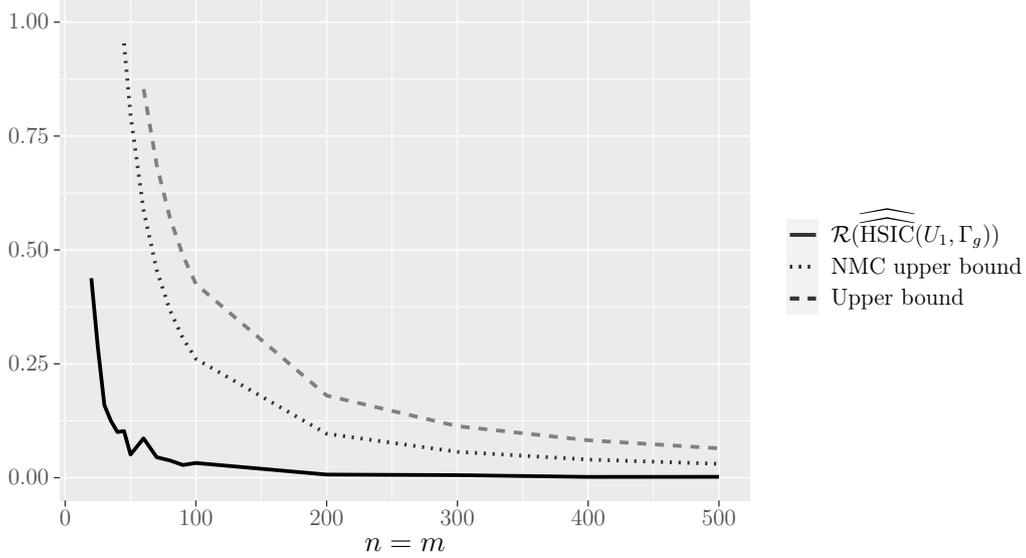}}
    \caption{Evolution of $\mathcal R(\widehat{\widehat{\operatorname{HSIC}}}(U_1,\Gamma_g))$ and of the associated upper bounds for the excursion set $\Gamma_g$}
    \label{quadratic_risk}
\end{figure}%
We observe, as expected, that the quadratic risk of the estimator is below the upper bound of Proposition \ref{prop nested}, but also below the NMC upper bound we would have by taking independent $\bm X^{(k)}_{i,j}$. This example confirms that we have not lost too much in terms of variance of the estimator by reusing the same $\bm X^{(k)}$. It also seems that we do not necessarily need to use high values of $n=m$. Therefore, in the next example, we will only compute the indices for $n=m=100$.
\subsection{Excursion sets of the optimization of an oscillator}
\label{oscill}
In \textcite{cousin_two-step_2022}, an optimization is performed with three probabilistic constraints. We consider the first two constraints, which are defined by the following functions :
\begin{equation*}
g_1(x_1,x_2,u_{1},u_{2},u_{p},u_{r_1},u_{r_2})=
u_{r_1} - \max_{t \in [0,T]} \Ydom'(x_1+u_{1},x_2+u_{2},u_p;t),
\end{equation*}
\begin{equation*}
g_2(x_1,x_2,u_1,u_2,u_{p},u_{r_1},u_{r_2})=
u_{r_2} - \max_{t \in [0,T]} \Ydom^{\prime \prime}(x_1+u_{1},x_2+u_{2},u_p;t),
\end{equation*}
where $\Ydom(x_1+u_{1},x_2+u_{2},u_p;t)$ is the solution of the harmonic oscillator defined by:
\begin{equation*}
(x_1+u_{1})\Ydom^{\prime \prime}(t)+u_p\Ydom'(t)+(x_2+u_{2})\Ydom(t)=\eta(t).
\end{equation*} 
The deterministic input domain is $\Xdom=[1,5] \times [20,50]$. The uncertain input probability distributions are given in Table \ref{tab:uncertainties oscillator}. $U_{r_3}$ is initially a random input associated with a third constraint. In our case it will play the role of a dummy input to check if it is recognized as negligible.
\begin{table}
\centering
\caption{Definition of the uncertain inputs}
\label{tab:uncertainties oscillator}
\begin{tabular}{cccc}
\toprule
Uncertainty & Distribution & Uncertainty & Distribution \\
\midrule
$U_{1}$ & $\mathcal{U}[-0.3, 0.3]$ & $U_{\mathrm{r}_1}$ & $\mathcal{N}(1,0.1^2)$ \\
$U_{2}$ & $\mathcal{U}[-1,1]$ & $U_{\mathrm{r}_2}$ & $\mathcal{N}(2.5,0.25^2)$ \\
$U_p$ & $\mathcal{U}[0.5,1.5]$ & $U_{\mathrm{r}_3}$ & $\mathcal{N}(15,3^2)$ \\
\bottomrule
\end{tabular}
\end{table}
We study the impact of the uncertain inputs on the excursion sets $\Gamma_{g_1}$ associated with the constraint $g_1 \leq 0$ and on the excursion sets $\Gamma_{g_2}$ associated with $g_2 \leq 0$. Since kernel-based methods are appropriate for vectorial outputs, we also consider the case where an output is the pair of the two excursion sets $(\Gamma_{g_1},\Gamma_{g_2})$, each associated with a constraint. For each case, we compute $\hat{\hat{S}}^{\operatorname{H}_{set}}_{i}$, $\hat{\hat{S}}^{\operatorname{H}_{set}}_{T_i}$ and the associated p-values for each uncertain input with $n=100$ and $m=100$. We again use the output kernel $k_{set}$ and $k_{set} \otimes k_{set}$ for the pairs of excursion sets. Note that the kernel $k_{set} \otimes k_{set}$ may not be characteristic. The same $5$ ANOVA kernels are used as in the previous example. We repeat the estimation $20$ times to again obtain the acceptance rates and boxplots of the first-order indices given in Figures \ref{hsic_anova_g1} to \ref{hsic_anova_g1g2couple}. The total-order indices are given in Figure \ref{fig:totalordercousin} in the Appendix \ref{annex:fig}.

\begin{figure}
  \begin{minipage}{0.52\textwidth} 
    \centering
    \begin{tabular}{lcccccc}
\toprule
Kernel& $U_1$ & $U_2$ & $U_p$ & $U_{r_1}$ & $U_{r_2}$ & $U_{r_3}$ \\
\midrule
$K_{Sob}$ & $15$ & $95$ & $55$ & $0$ & $100$ & $90$ \\
$K_{gauss}$ & $15$ & $100$ & $60$ & $0$ & $100$ & $95$ \\
$K_{exp}$ & $25$ & $90$ & $60$ & $0$ & $100$ & $95$ \\
$K_{3/2}$ & $15$ & $90$ & $60$ & $0$ & $100$ & $95$ \\
$K_{5/2}$ & $15$ & $100$ & $60$ & $0$ & $100$ & $95$ \\
\bottomrule
\end{tabular}
    \subcaption{Acceptance rates ($\%$) over $20$ independence tests with a risk of $5\%$}
    \label{tab:acceptance-rate-g1}
  \end{minipage}%
  \begin{minipage}{0.48\textwidth} 
    \centering
    \scalebox{0.9}{\input{Images/oscillator/g1/hsic_anova_3_no_pvalue_only_1_order}}
    \subcaption{Estimations of $\hat{\hat{S}}^{\operatorname{H}_{set}}_{i}$}
    \label{fig:indices:g1}
  \end{minipage}
  \caption{Acceptance rates (\subref{tab:acceptance-rate-g1}) and estimation of $\hat{\hat{S}}^{\operatorname{H}_{set}}_{i}$  (\subref{fig:indices:g1})  for the excursion set $\Gamma_{g_1}$ computed for 5 kernels with $n=100$, $m=100$ and repeated 20 times}
  \label{hsic_anova_g1}
\end{figure}

\begin{figure}
  \begin{minipage}{0.52\textwidth} 
    \centering
    \begin{tabular}{lcccccc}
\toprule
Kernel& $U_1$ & $U_2$ & $U_p$ & $U_{r_1}$ & $U_{r_2}$ & $U_{r_3}$ \\
\midrule
$K_{Sob}$ & $0$ & $100$ & $95$ & $95$ & $0$ & $95$ \\
$K_{gauss}$ & $0$ & $100$ & $95$ & $95$ & $0$ & $95$ \\
$K_{exp}$ & $0$ & $100$ & $95$ & $100$ & $0$ & $95$ \\
$K_{3/2}$ & $0$ & $100$ & $95$ & $100$ & $0$ & $95$ \\
$K_{5/2}$ & $0$ & $100$ & $95$ & $95$ & $0$ & $95$ \\
\bottomrule
\end{tabular}
    \subcaption{Acceptance rates ($\%$) over $20$ independence tests with a risk of $5\%$}
    \label{tab:acceptance-rate-g2}
  \end{minipage}%
  \begin{minipage}{0.48\textwidth} 
    \centering
    \scalebox{0.9}{\input{Images/oscillator/g2/hsic_anova_3_no_pvalue_only_1_order}}
    \subcaption{Estimations of $\hat{\hat{S}}^{\operatorname{H}_{set}}_{i}$}
    \label{fig:indices:g2}
  \end{minipage}
  \caption{Acceptance rates (\subref{tab:acceptance-rate-g2}) and estimation of $\hat{\hat{S}}^{\operatorname{H}_{set}}_{i}$  (\subref{fig:indices:g2})  for the excursion set $\Gamma_{g_2}$ computed for 5 kernels with $n=100$, $m=100$ and repeated 20 times}
  \label{hsic_anova_g2}
\end{figure}

\begin{figure}
  \begin{minipage}{0.52\textwidth} 
    \centering
    \begin{tabular}{ccccccc}
\toprule
Kernel& $U_1$ & $U_2$ & $U_p$ & $U_{r_1}$ & $U_{r_2}$ & $U_{r_3}$ \\
\midrule
$K_{Sob}$ & $0$ & $95$ & $60$ & $0$ & $0$ & $90$ \\
$K_{gauss}$ & $0$ & $100$ & $60$ & $0$ & $0$ & $90$ \\
$K_{exp}$ & $0$ & $95$ & $60$ & $0$ & $0$ & $90$ \\
$K_{3/2}$ & $0$ & $100$ & $60$ & $0$ & $0$ & $90$ \\
$K_{5/2}$ & $0$ & $100$ & $60$ & $0$ & $0$ & $90$ \\
\bottomrule
\end{tabular}
    \subcaption{Acceptance rates ($\%$) over $20$ independence tests with a risk of $5\%$}
    \label{tab:acceptance-rate-g1g2couple}
  \end{minipage}%
  \begin{minipage}{0.48\textwidth} 
    \centering
    \scalebox{0.9}{\input{Images/oscillator/g1_g2_couple/hsic_anova_3_no_pvalue_only_1_order}}
    \subcaption{Estimations of $\hat{\hat{S}}^{\operatorname{H}_{set}}_{i}$}
    \label{fig:indices:g1g2couple}
  \end{minipage}
  \caption{Acceptance rates (\subref{tab:acceptance-rate-g1g2couple}) and estimation of $\hat{\hat{S}}^{\operatorname{H}_{set}}_{i}$  (\subref{fig:indices:g1g2couple}) for the pair of excursion sets
$(\Gamma_{g_1} , \Gamma_{g_2} )$  computed for 5 kernels with $n=100$, $m=100$ and repeated 20 times}
\label{hsic_anova_g1g2couple}
\end{figure}

Acceptance rates tables can be used for screening and the boxplots for ranking. For the random set associated with the constraint $g_1 \leq 0$ (Figure \ref{hsic_anova_g1}), only $U_{r_1}$ is always detected as influential. $U_1$ is also detected as influential most of the time (with an acceptance rate of about $20\%$), and $U_p$ is detected as influential only $40 \%$ of the time. Between these three inputs, $U_{r_1}$ is much more influential, since $\hat{\hat{S}}^{\operatorname{H}_{set}}_{r_1} \approx 80 \%$. Then $U_1$ and $U_p$ are responsible for about $10\%$ and $3\%$ of $\operatorname{HSIC}(\bm U,\Gamma_{g_1})$. For the second constraint $g_2 \leq 0$ (Figure \ref{hsic_anova_g2}), only $U_1$ and $U_{r_2}$ are influential with their first-order index of about $10\%$ and $80\%$, respectively. If we consider the pair of the two random sets (Figure \ref{hsic_anova_g1g2couple}), we get a kind of compromise between the two previous cases: $U_1$, $U_{r_1}$ and $U_{r_2}$ are always recognized as influential. In terms of ranking, $U_{r_1}$ and $U_{r_2}$ have almost the same influence, with their first-order index around $40\%$ and $U_1$ remaining around $10\%$. It is important to note that considering pairs of random sets is completely different from considering the random set associated with the pair of constraints, denoted $\Gamma_{(g_1,g_2)}$, i.e., the intersection of the two sets. The results of the latter are shown in Figure \ref{hsic_anova_g1andg2} in Appendix \ref{annex:fig} and are indeed very different from the results in Figure \ref{hsic_anova_g1g2couple}. In this test case, we observe that the choice of the input kernel has a limited impact on the indices. Regardless of the stage of screening or ranking, the conclusions remain the same for the five kernels. 

\subsection{Sensitivity analysis for robust conception of an electrical machine}

In \autocite{reyes2023study}, a sensitivity analysis is performed in the context of robust conception of an electrical machine. The studied machine is a permanent magnet assisted synchronous reluctance motor which is one of the most used machines nowadays in electrical vehicles. The purpose of the authors is to take into account the uncertainties (manufactoring and assembly tolerances) on geometric and magnetic parameters of the machine components in the optimization of the mean torque (to be maximized) and the torque ripples (to be minimized). The resulting optimization problem is the following
$$
\min _{x \in \Xdom}\left(\Esp\left[f_1(x,\bm U) \right], \Esp\left[f_2(x,\bm U)\right] \right)
$$
where $f_1$ and $f_2$ are two real-valued objective functions defined on $\Xdom \subset \Real^{12}$ (respectively the opposite of the mean torque and the torque ripple) and $\Udom \subset \Real^{14}$. $12$ of the $14$ uncertain inputs are manufacturing tolerances on each $x$, summarized in \autoref{tab:uncertainties_geometric}, where, for example, $\pm 0.1^{\circ}$ means that the slot angle uncertainty $U_1$ follows a uniform law on $[-0.1, 0.1]$. The other two, $U_{13}$ and $U_{14}$, describe the magnetic material properties and follow uniform distributions on $[-1,1]$. 

\begin{figure}
  \begin{minipage}{0.52\textwidth} 
    \centering
    \includegraphics[scale=0.3]{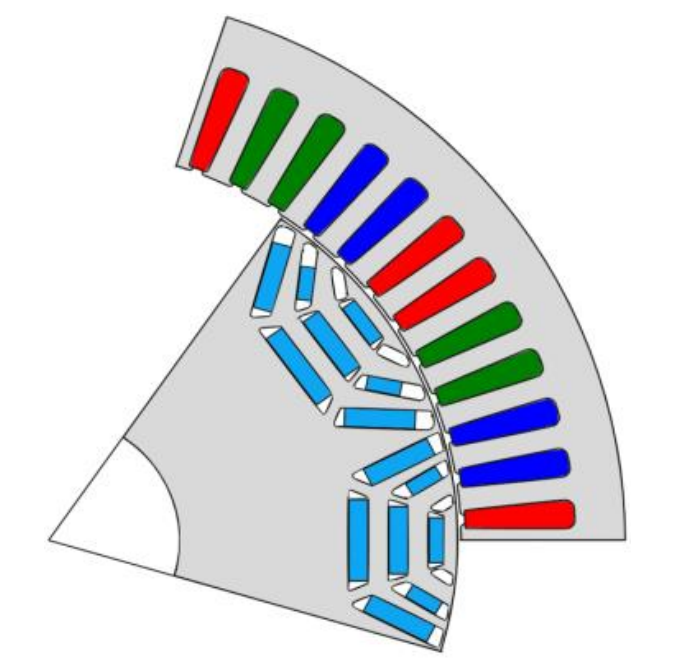}
    \subcaption{}
    \label{fig:geomtry}
  \end{minipage}%
  \begin{minipage}{0.48\textwidth} 
    \centering
    \includegraphics[scale=0.3]{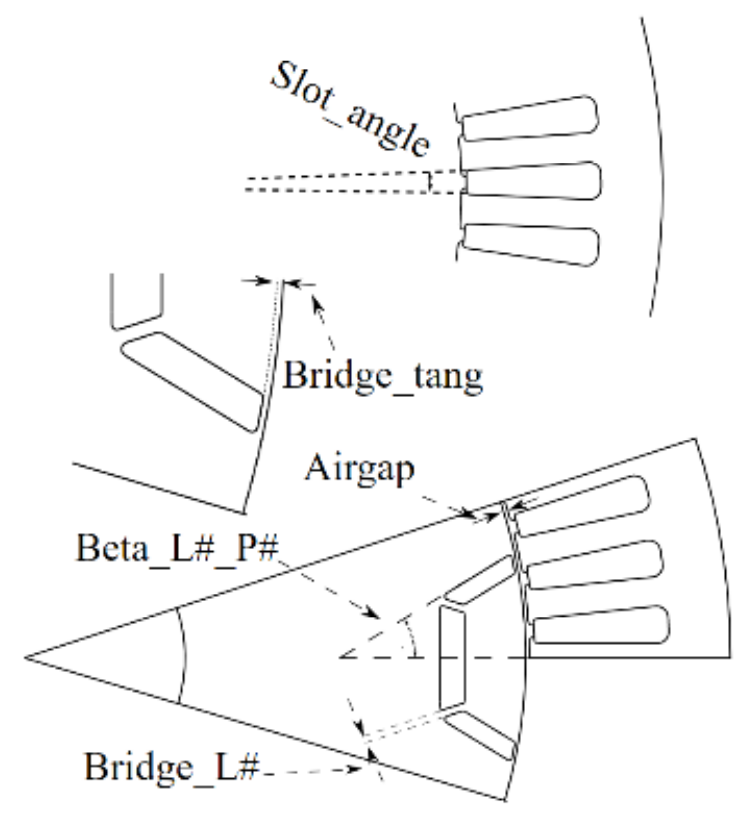}
    \subcaption{}
    \label{fig:design_parameters}
  \end{minipage}
  \caption{(a) Geometry of the permanent magnet assisted synchronous reluctance motor - (b) Design parameters for one layer (\# is the number of layer)}
  \label{fig:description_moteur}
\end{figure}

\begin{table}
\centering
\resizebox{0.7\textwidth}{!}{\begin{tabular}{cccc}
\toprule
\textbf{Input} & \textbf{Lower bound}  & \textbf{Upper bound} & \textbf{Manufacturing} \\
\textbf{parameters} & $x_{\text{min}}$ & $x_{\text{max}}$ & \textbf{Tolerance} $U$\\
\midrule
Slot angle & $2.47^\circ$ & $3.27^\circ$ & $\pm 0.1^\circ$ \\
$\beta_{L1P1}$ & $27.03^\circ$ & $29.66^\circ$ & $\pm 0.33^\circ$ \\
$\beta_{L1P2}$ & $37.03^\circ$ & $39.66^\circ$ & $\pm 0.33^\circ$ \\
$\beta_{L2P1}$ & $31.03^\circ$ & $33.66^\circ$ & $\pm 0.33^\circ$ \\
$\beta_{L2P2}$ & $47.03^\circ$ & $49.66^\circ$ & $\pm 0.33^\circ$ \\
$\beta_{L3P1}$ & $33.7^\circ$ & $37^\circ$ & $\pm 0.33^\circ$ \\
$\beta_{L3P2}$ & $59.7^\circ$ & $63^\circ$ & $\pm 0.33^\circ$ \\
Airgap & $0.55$ mm & $0.65$ mm & $\pm 0.03$ mm \\
Bridge$_{L1}$ & $2.6$ mm & $2.98$ mm & $\pm 0.05$ mm \\
Bridge$_{L2}$ & $0.9$ mm & $1.18$ mm & $\pm 0.05$ mm \\
Bridge$_{L3}$ & $0.5$ mm & $0.62$ mm & $\pm 0.03$ mm \\
Bridge$_{tang}$ & $0.4$ mm & $0.6$ mm & $\pm 0.05$ mm \\
\bottomrule
\end{tabular}}
\caption{Geometrical variables (see \autocite{reyes2023study} for more details)}
\label{tab:uncertainties_geometric}
\end{table}

The purpose of our study is to screen and rank the uncertain parameters according to their impact on the quantities of interest in this optimization.
Since we want to minimize an objective function, we are interested in quantifying the effect of $\bm U$ on sets where the objective function takes low values. This can be done by looking at the random set of the form $\Gamma_{f_1}= \{ x \in \Xdom, f_1(x,\bm U) \leq q_1 \}$, where $q_1$ is a threshold to be selected. It quantifies the effect of $\bm U$ on the sets where $f_1$ is below $q_1$, but does not take into account the variations of $f_1$ within this low-valued region. For this reason, we propose to look at the effect of the uncertain inputs on the set $\Gamma_{F_1}$ defined by $
\Gamma_{F_1} = \{ (\bm x,x_{13}) \in \Xdom \times [f_1^{min}, q_1],  x_{13} \leq f_1(\bm x,\bm U)  \}$
where $f_1^{min}$ is a lower bound  to only consider sets of low values of $f_1$. This also corresponds to the excursion set associated with $F_1: \Xdom \times [f_1^{min} , q_1]\times \Udom \rightarrow \Real$ defined by $F_1(\bm x,\bm u)=x_{13}-f_1(\bm x_{-13},\bm u)$. $\Gamma_{F_2}$ is defined similarly. 

We study three different cases~: sensitivity analysis on $\Gamma_{F_1}$, on $\Gamma_{F_2}$, and on the pair of excursion sets $(\Gamma_{F_1},\Gamma_{F_2})$. We use the thresholds $-q_1=420$ N.m and $q_2=7 \%$ where lies the Pareto front in Figure 9 of \autocite{reyes2023study}.
For these three cases, we take $n=m=100$ and use only the Sobolev kernel. With $20$ replicates, the acceptance rates with a risk of $0.05 \%$ are given in \autoref{tab:pval_adan} and the boxplots of the first-order indices are plotted in \autoref{fig:hsic_anova_adan}. 
\begin{table}
\centering
\resizebox{\textwidth}{!}{
\begin{tabular}{l *{14}{c}}
  \toprule
 & $U_1$ & $U_2$ & $U_3$ & $U_4$ & $U_5$ & $U_6$ & $U_7$ & $U_8$ & $U_9$ & $U_{10}$ & $U_{11}$ & $U_{12}$ & $U_{13}$ & $U_{14}$ \\ 
  \midrule
$\Gamma_{F_1}$ & 0.40 & 0.90 & 0.75 & 0.90 & 0.75 & 1.00 & 1.00 & 0.40 & 0.90 & 0.90 & 0.85 & 1.00 & 0.00 & 0.00 \\ 
$\Gamma_{F_2}$ & 0.70 & 0.00 & 0.00 & 0.75 & 0.00 & 1.00 & 0.65 & 0.15 & 0.95 & 0.70 & 0.90 & 0.60 & 0.00 & 0.40 \\ 
 $(\Gamma_{F_1},\Gamma_{F_2})$ & 0.25 & 0.00 & 0.05 & 0.90 & 0.05 & 1.00 & 0.95 & 0.10 & 0.85 & 0.75 & 0.80 & 0.75 & 0.00 & 0.00 \\ 
   \bottomrule
\end{tabular}}
\caption{Acceptance rates ($\%$) over $20$ independence tests with a risk of $5\%$ for the excursion sets $\Gamma_{F_1}$, $\Gamma_{F_2}$ and the pair $(\Gamma_{F_1},\Gamma_{F_2})$ with $-q_1=420$ N.m and $q_2=7\%$ computed with the Sobolev input kernel and with $n=100$, $m=100$.}
\label{tab:pval_adan}
\end{table}
\begin{figure}
\centering
 \resizebox{\textwidth}{!}{\input{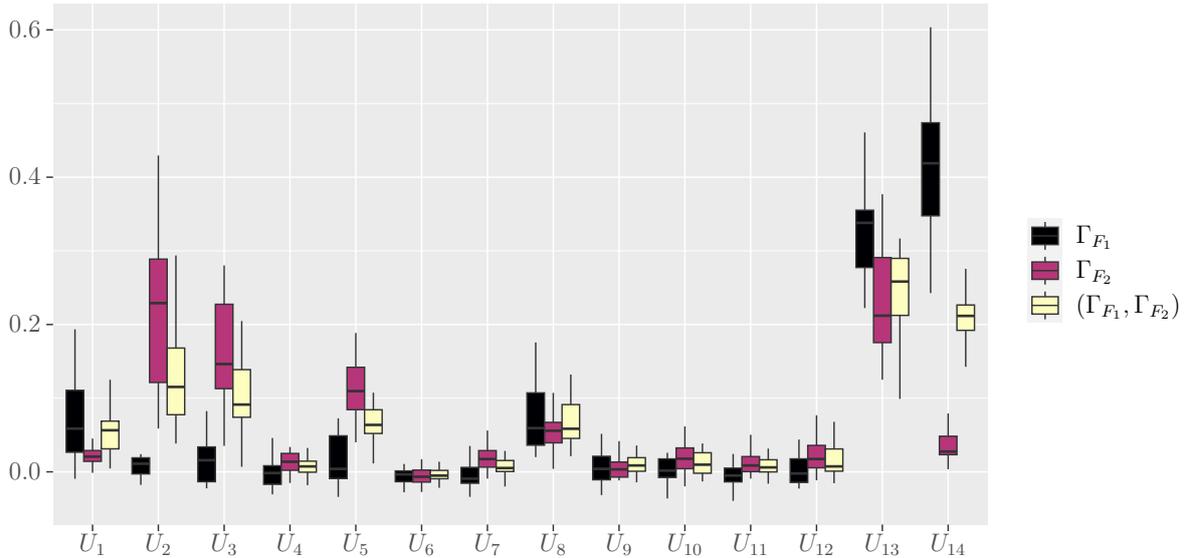}}
\caption{Estimation of $\hat{\hat{S}}^{\operatorname{H}_{set}}_{i}$ for the excursion sets $\Gamma_{F_1}$, $\Gamma_{F_2}$ and the pair $(\Gamma_{F_1},\Gamma_{F_2})$ with $-q_1=420$ N.m and $q_2=7\%$ computed with the Sobolev input kernel and with $n=100$, $m=100$. $20$ replicates.}
\label{fig:hsic_anova_adan}
\end{figure}

\autoref{tab:pval_adan} shows that the inputs influencing the excursion sets $\Gamma_{F_1}$ and $\Gamma_{F_2}$ are different. Only $U_{13}$ and $U_{14}$ are always tested as influential for $\Gamma_{F_1}$ and $U_2$, $U_3$, $U_5$ and $U_{13}$ for $\Gamma_{F_2}$. They correspond to the most influential inputs in \autoref{fig:hsic_anova_adan} and can be ranked. For example, $U_2$ has the most influence on $\Gamma_{F_2}$, followed by $U_{13}$, $U_3$ and $U_5$. Detected about $95\%$ of the time as independent of the output, $U_2$, $U_4$, $U_6$, $U_7$, $U_9$, $U_{10}$, $U_{12}$ for $\Gamma_{F_1}$, and $U_6$, $U_9$, $U_{11}$ for $\Gamma_{F_2}$ could be removed to simplify each model. The remaining inputs have an acceptance rate between $15\%$ and $85\%$, which means that they are sometimes classified as independent and sometimes not. They correspond to inputs that have a small effect on the output, as can be observed in \autoref{fig:hsic_anova_adan}. By taking the pair of the two sets, the influential inputs are the one that were influential on at least one of the sets. This results in five dominant inputs that can be ranked by their first order index: $ U_{13} \succeq U_{14} \succeq U_2  \succeq U_3 \succeq U_5$. The results associated to the excursion sets of the forms $\Gamma_{f_i}= \{ x \in \Xdom, f_i(x,\bm U) \leq q_i \}$ are similar in this case and given in Appendix \ref{annex:fig}.


\section{Conclusion}

In this paper, we propose a method to perform sensitivity analysis on set-valued outputs through kernel-based sensitivity analysis, which relies on the choice of a kernel between sets. We introduce the kernel $k_{set}$, which is based on the symmetric difference between two sets. We show that it is characteristic, which is an essential property for performing screening. We then adapt the recent HSIC-ANOVA index to set-valued outputs and introduce an efficient estimator. Finally, we compute the indices on three test cases including a real application for robust design of electrical motor. The proposed method allows to screen and rank the uncertain inputs according to their impact on the excursion sets. For future research, it could be interesting to find and study other set-valued output kernels. Other types of approaches to perform sensitivity analysis for sets could also be investigated, such as using universal indices from \textcite{GSA_Wass_universal_index}, or by using classic random set theory from \textcite{molchanov_random_2017}.

In the context of robust optimization, the presented method and especially the screening results could be used to reduce the dimension of the uncertain space by quantifying the impact of uncertain inputs on the optimization constraints. Reducing the dimension of the uncertain space can then be useful to reduce the computational cost of a joint space metamodel that could be used within a Bayesian optimization. 

More generally, sensitivity analysis for sets can also be used when dealing with numerical codes with set-valued outputs. This occurs in several areas: in the field of viability, where the outputs are sets called viability kernels, or, for example, in flood risk, where the output is the map of flooded areas.
\if0\blind{
\section{Aknowledgment}
The authors thank Gabriel Sarazin for his numerous fruitful discussions and comments. We are also grateful to the reviewers and the associate editor for their relevant and helpful comments. This research was conducted with the support of the consortium in Applied Mathematics
\href{https://doi.org/10.5281/zenodo.6581216}{\underline{CIROQUO}}, gathering partners in technological research and academia in the development of advanced methods for Computer Experiments.
}\fi
\setlength\bibitemsep{0.5\itemsep}
\printbibliography

@article{sobol_global_2001,
    author = {Sobol’, Il'ya M.},
	title = {Global sensitivity indices for nonlinear mathematical models and their Monte Carlo estimates},
	journal = {Mathematics and Computers in Simulation},
	year = {2001},
	volume = {55},
	number = {1},
	pages = {271--280},
	url = {https://www.sciencedirect.com/science/article/pii/S0378475400002706},
	doi = {10.1016/S0378-4754(00)00270-6},
}

@article{GSA_Wass_universal_index,
    author = {Fort, Jean-Claude and Klein, Thierry and Lagnoux, Agnès},
    title = {Global Sensitivity Analysis and Wasserstein Spaces},
    journal = {SIAM/ASA Journal on Uncertainty Quantification},
    volume = {9},
    number = {2},
    pages = {880--921},
    year = {2021},
    doi = {10.1137/20M1354957},
    URL = {https://doi.org/10.1137/20M1354957},
}

@article{Sa_gen_met_space,
    author = {Gamboa, Fabrice and Klein, Thierry and Lagnoux, Agn{\`e}s and Moreno, Leonardo},
    title = {{Sensitivity analysis in general metric spaces}},
    journal = {{Reliability Engineering and System Safety}},
    year = {2021},
    volume = {212},
    doi = {10.1016/j.ress.2021.107611},
    URL = {https://hal.archives-ouvertes.fr/hal-02044223},
}

@article{set-index-process-herbin,
    author = {Paul Balan{\c{c}}a and Erick Herbin},
    title = {{A set-indexed Ornstein-Uhlenbeck process}},
    journal = {Electronic Communications in Probability},
    volume = {17},
    pages = {1--14},
    year = {2012},
    doi = {10.1214/ECP.v17-1903},
    URL = {https://doi.org/10.1214/ECP.v17-1903},
}

@article{sriperumbudur_universality_2010,
  author  = {Bharath K. Sriperumbudur and Kenji Fukumizu and Gert R.G. Lanckriet},
  title   = {Universality, Characteristic Kernels and RKHS Embedding of Measures},
  journal = {Journal of Machine Learning Research},
  year    = {2011},
  volume  = {12},
  number  = {70},
  pages   = {2389--2410},
  url     = {http://jmlr.org/papers/v12/sriperumbudur11a.html}
}

@article{szabo_characteristic_2018,
  author  = {Zolt{{\'a}}n Szab{{\'o}} and Bharath K. Sriperumbudur},
  title   = {Characteristic and Universal Tensor Product Kernels},
  journal = {Journal of Machine Learning Research},
  year    = {2018},
  volume  = {18},
  number  = {233},
  pages   = {1--29},
  url     = {http://jmlr.org/papers/v18/17-492.html}
}

@article{gretton_hsic2,
  author  = {Arthur Gretton and Ralf Herbrich and Alexander Smola and Olivier Bousquet and Bernhard Schölkopf},
  title   = {Kernel Methods for Measuring Independence},
  journal = {Journal of Machine Learning Research},
  year    = {2005},
  volume  = {6},
  number  = {70},
  pages   = {2075--2129},
  url     = {http://jmlr.org/papers/v6/gretton05a.html}
}

@article{cousin_two-step_2022,
  title={A two-step procedure for time-dependent reliability-based design optimization involving piece-wise stationary Gaussian processes},
  author={Cousin, Alexis and Garnier, Josselin and Guiton, Martin and Munoz Zuniga, Miguel},
  journal={Structural and Multidisciplinary Optimization},
  volume={65},
  number={4},
  pages={120},
  year={2022},
}

@article{morris,
  author = {Max D. Morris},
  title = {Factorial Sampling Plans for Preliminary Computational Experiments},
  journal = {Technometrics},
  volume = {33},
  number = {2},
  pages = {161--174},
  year = {1991},
  url = {http://www.jstor.org/stable/1269043},
  doi = {---},
}

@article{BORGONOVO2007771,
  author = {E. Borgonovo},
  title = {A new uncertainty importance measure},
  journal = {Reliability Engineering and System Safety},
  volume = {92},
  number = {6},
  pages = {771-784},
  year = {2007},
  doi = {https://doi.org/10.1016/j.ress.2006.04.015},
  url = {https://www.sciencedirect.com/science/article/pii/S0951832006000883},
}

@article{Marczewski1958OnAC,
  title = {On a certain distance of sets and the corresponding distance of functions},
  author = {Edward Marczewski and Hugo Steinhaus},
  journal = {Colloquium Mathematicum},
  year = {1958},
  volume = {6},
  pages = {319-327},
}

@article{Sobol_multivariate,
  author = {Fabrice Gamboa and Alexandre Janon and Thierry Klein and Agn{\`e}s Lagnoux},
  title = {Sensitivity analysis for multidimensional and functional outputs},
  volume = {8},
  journal = {Electronic Journal of Statistics},
  number = {1},
  publisher = {Institute of Mathematical Statistics and Bernoulli Society},
  pages = {575--603},
  keywords = {Concentration inequalities, quadratic functionals, Semi-parametric efficient estimation, sensitivity analysis, Sobol indices, temporal output, vector output},
  year = {2014},
  doi = {10.1214/14-EJS895},
  url = {https://doi.org/10.1214/14-EJS895},
}

@article{Lyons_2013,
   title={Distance covariance in metric spaces},
   author={Lyons, Russell},
   journal={The Annals of Probability},
   volume={41},
   number={5},
   year={2013},
   url={http://dx.doi.org/10.1214/12-AOP803},
   doi={10.1214/12-aop803},
}

@article{Sejdinovic_2013,
   title={Equivalence of distance-based and RKHS-based statistics in hypothesis testing},
   author={Sejdinovic, Dino and Sriperumbudur, Bharath and Gretton, Arthur and Fukumizu, Kenji},
   journal={The Annals of Statistics},
   volume={41},
   number={5},
   year={2013},
   url={http://dx.doi.org/10.1214/13-AOS1140},
   doi={10.1214/13-aos1140},
}

@book{hida2013white,
  title={White noise: an infinite dimensional calculus},
  author={Hida, Takeyuki and Kuo, Hui-Hsiung and Potthoff, J{\"u}rgen and Streit, Ludwig},
  volume={253},
  year={2013},
  publisher={Springer Science \& Business Media}
}

@article{spagnolarticle,
   author={Spagnol, Adrien and Riche, Rodolphe Le and Veiga, Sébastien Da},
   title={Global Sensitivity Analysis for Optimization with Variable Selection},
   journal={SIAM/ASA Journal on Uncertainty Quantification},
   volume={7},
   number={2},
   year={2019},
   pages={417-443},
   doi={10.1137/18M1167978},
   url={https://doi.org/10.1137/18M1167978},
}

@article{veiga2015,
   author={Sebastien {Da Veiga}},
   title={Global sensitivity analysis with dependence measures},
   journal={Journal of Statistical Computation and Simulation},
   volume={85},
   number={7},
   year={2015},
   pages={1283-1305},
   doi={10.1080/00949655.2014.945932},
   url={https://doi.org/10.1080/00949655.2014.945932},
}

@book{molchanov_random_2017,
  title={Theory of random sets},
  author={Molchanov, Ilya},
  volume={19},
  number={2},
  year={2005},
  publisher={Springer},
}

@book{nguyen_introduction_2006,
  title={An introduction to random sets},
  author={Nguyen, Hung T},
  year={2006},
  publisher={CRC press}
}

@article{Reda_optim,
  title={A Sampling Criterion for Constrained Bayesian Optimization with Uncertainties},
  author={El Amri, Reda and Le Riche, Rodolphe and Helbert, C{\'e}line and Blanchet-Scalliet, Christophette and Da Veiga, S{\'e}bastien},
  journal={The SMAI Journal of computational mathematics},
  volume={9},
  pages={285--309},
  year={2023}
}

@article{Sobol1993SensitivityEF,
  title={Sensitivity estimates for nonlinear mathematical models},
  author={Sobol', I M},
  journal={Math. Model. Comput. Exp.},
  volume={1},
  pages={407},
  year={1993}
}

@book{shawe-taylor_basic_2004,
    title={Basic kernels and kernel types}, DOI={10.1017/CBO9780511809682.010}, 
    title={Kernel Methods for Pattern Analysis}, publisher={Cambridge University Press}, 
    author={Shawe-Taylor, John and Cristianini, Nello}, year={2004},
    pages={291–326}
}

@article{daveiga:hal-03108628,
  title={Kernel-based ANOVA decomposition and Shapley effects--Application to global sensitivity analysis},
  author={Da Veiga, S{\'e}bastien},
  journal={Preprint},
  year={2021}
}

@article{Gretton_hsic1,
  title={Measuring statistical dependence with Hilbert-Schmidt norms},
  author={Gretton, Arthur and Bousquet, Olivier and Smola, Alex and Sch{\"o}lkopf, Bernhard},
  journal={International conference on algorithmic learning theory},
  pages={63--77},
  year={2005},
  organization={Springer}
}

@article{gamboa_global_2021,
  title={Global sensitivity analysis: A novel generation of mighty estimators based on rank statistics},
  author={Gamboa, Fabrice and Gremaud, Pierre and Klein, Thierry and Lagnoux, Agn{\`e}s},
  journal={Bernoulli},
  volume={28},
  number={4},
  pages={2345--2374},
  year={2022},
  publisher={Bernoulli Society for Mathematical Statistics and Probability}
}

@incollection{kernels_steinwart,
  title={Kernels and reproducing kernel hilbert spaces},
  author={Steinwart, I and Christmann, A},
  booktitle={Support Vector Machines},
  pages={110--163},
  year={2008},
  publisher={Springer}
}

@article{gretton_kernel_2007,
  title={A kernel statistical test of independence},
  author={Gretton, Arthur and Fukumizu, Kenji and Teo, Choon and Song, Le and Sch{\"o}lkopf, Bernhard and Smola, Alex},
  journal={Advances in neural information processing systems},
  volume={20},
  year={2007}
}

@article{song_supervised_2007,
author = {Song, Le and Smola, Alexander and Gretton, Arthur and Borgwardt, Karsten and Bedo, Justin},
year = {2007},
  pages={823--830},
title = {Supervised Feature Selection via Dependence Estimation},
journal = {Proceedings of the 24th international conference on Machine learning},
doi = {10.1145/1273496.1273600}
}

@inproceedings{rainforth_nesting_2018,
  title={On nesting monte carlo estimators},
  author={Rainforth, Tom and Cornish, Rob and Yang, Hongseok and Warrington, Andrew and Wood, Frank},
  booktitle={International Conference on Machine Learning},
  pages={4267--4276},
  year={2018},
}

@article{ReviewSA,
  title={A review on global sensitivity analysis methods},
  author={Iooss, Bertrand and Lemaître, Paul},
  journal={Uncertainty management in simulation-optimization of complex systems: algorithms and applications},
  pages={101--122},
  year={2015},
  publisher={Springer}
}

@book{livre_SA,
author = {Da Veiga, Sébastien and Gamboa, Fabrice and Iooss, Bertrand and Prieur, Clémentine},
title = {Basics and Trends in Sensitivity Analysis},
publisher = {Society for Industrial and Applied Mathematics},
year = {2021},
doi = {10.1137/1.9781611976694},
URL = {https://epubs.siam.org/doi/abs/10.1137/1.9781611976694},
}

@article{amri_morel,
  title={More powerful HSIC-based independence tests, extension to space-filling designs and functional data},
  author={{El Amri}, {Mohamed Reda} and Marrel, Amandine},
  journal={International Journal for Uncertainty Quantification},
  volume={14},
  number={2},
  year={2024},
  publisher={Begel House Inc.}
}

@inproceedings{anova_kernel,
  title={On ANOVA decompositions of kernels and Gaussian random field paths},
  author={Ginsbourger, David and Roustant, Olivier and Schuhmacher, Dominic and Durrande, Nicolas and Lenz, Nicolas},
  booktitle={Monte Carlo and Quasi-Monte Carlo Methods},
  pages={315--330},
  year={2016},
  organization={Springer}
}

@article{charac_kernel,
  title={Characteristic kernels on Hilbert spaces, Banach spaces, and on sets of measures},
  author={Ziegel, Johanna and Ginsbourger, David and D{\"u}mbgen, Lutz},
  journal={preprint},
  year={2022}
}

@inproceedings{hsic_anova_test,
  title={Test d'ind{\'e}pendance bas{\'e} sur les indices HSIC-ANOVA d'ordre total},
  author={Sarazin, Gabriel and Marrel, Amandine and Da Veiga, S{\'e}bastien and Chabridon, Vincent},
  booktitle={53{\`e}mes Journ{\'e}es de Statistique de la SFdS},
  year={2022}
}

@article{rainforth2016pitfalls,
  title={On the pitfalls of nested Monte Carlo},
  author={Rainforth, Tom and Cornish, Robert and Yang, Hongseok and Wood, Frank},
  journal={preprint},
  year={2016}
}

@article{cramervonmisesindex,
  title={Sensitivity analysis based on Cramér--von Mises distance},
  author={Gamboa, Fabrice and Klein, Thierry and Lagnoux, Agn{\`e}s},
  journal={SIAM/ASA Journal on Uncertainty Quantification},
  volume={6},
  number={2},
  pages={522--548},
  year={2018},
  publisher={SIAM}
}

@book{book:functionalanalysis,
	title = {Real and complex analysis},
	publisher = {McGraw-Hill},
	author = {Rudin, Walter},
	year = {1987},
}

@article{review_kernel_mean_embedding,
  title={Kernel mean embedding of distributions: A review and beyond},
  author={Muandet, Krikamol and Fukumizu, Kenji and Sriperumbudur, Bharath and Sch{\"o}lkopf, Bernhard and others},
  journal={Foundations and Trends{\textregistered} in Machine Learning},
  volume={10},
  number={1-2},
  pages={1--141},
  year={2017},
  publisher={Now Publishers, Inc.}
}

@article{delozzo2014new,
  title={New improvements in the use of dependence measures for sensitivity analysis and screening},
  author={De Lozzo, Matthias and Marrel, Amandine},
  journal={Journal of Statistical Computation and Simulation},
  volume={86},
  number={15},
  pages={3038--3058},
  year={2016},
  publisher={Taylor \& Francis}
}

@article{de2017sensitivity,
  title={Sensitivity analysis with dependence and variance-based measures for spatio-temporal numerical simulators},
  author={De Lozzo, Matthias and Marrel, Amandine},
  journal={Stochastic environmental research and risk assessment},
  volume={31},
  pages={1437--1453},
  year={2017},
  publisher={Springer}
}

@article{sarazin:cea-04320711,
  title={New insights into the feature maps of Sobolev kernels: application in global sensitivity analysis},
  author={Sarazin, Gabriel and Marrel, Amandine and da Veiga, Sebastien and Chabridon, Vincent},
  year={2023}
}

@article{reyes2023study,
  title={Study on the impact of uncertain design parameters on the perfomances of a permanent magnet assisted synchronous reluctance motor},
  author={Reyes Reyes, Ad{\'a}n and Nasr, Andr{\'e} and Sinoquet, Delphine and Hlioui, Sami},
  journal={Sci. Tech. Energ. Transition},
  year={2024}
}

@article{higdon2008computer,
  title={Computer model calibration using high-dimensional output},
  author={Higdon, Dave and Gattiker, James and Williams, Brian and Rightley, Maria},
  journal={Journal of the American Statistical Association},
  volume={103},
  number={482},
  pages={570--583},
  year={2008},
  publisher={Taylor \& Francis}
}

@article{marrel2011global,
  title={Global sensitivity analysis for models with spatially dependent outputs},
  author={Marrel, Amandine and Iooss, Bertrand and Jullien, Michel and Laurent, B{\'e}atrice and Volkova, Elena},
  journal={Environmetrics},
  volume={22},
  number={3},
  pages={383--397},
  year={2011},
  publisher={Wiley Online Library}
}

@article{perrin2021functional,
  title={Functional principal component analysis for global sensitivity analysis of model with spatial output},
  author={Perrin, TVE and Roustant, Olivier and Rohmer, J{\'e}r{\'e}my and Alata, Olivier and Naulin, JP and Idier, D{\'e}borah and Pedreros, Rodrigo and Moncoulon, D and Tinard, P},
  journal={Reliability Engineering \& System Safety},
  volume={211},
  pages={107522},
  year={2021},
  publisher={Elsevier}
}

@article{marrel2015development,
  title={Development of a surrogate model and sensitivity analysis for spatio-temporal numerical simulators},
  author={Marrel, Amandine and Perot, Nadia and Mottet, Cl{\'e}mentine},
  journal={Stochastic environmental research and risk assessment},
  volume={29},
  pages={959--974},
  year={2015},
  publisher={Springer}
}

\newpage
\section{Appendix}
\subsection{Proofs}
\subsubsection{Proof of Lemma \ref{lemme}}
\label{proof:lemma}
\lemma*
\begin{proof}
    \begin{align*}
        \delta(\gamma_1,\gamma_2)
        &=\lambda(\gamma_1 \Delta \gamma_2)\\
        &= \int_{\Xdom} \mathbbold 1 _{\gamma_1 \Delta \gamma_2} d\lambda \\
        &= \int_{\Xdom} (\mathbbold 1 _{\gamma_1 \cup \gamma_2} -\mathbbold 1 _{\gamma_1 \cap \gamma_2})d\lambda \\
        &= \int_{\Xdom} (\mathbbold 1 _{\gamma_1 } + \mathbbold 1 _{\gamma_2 } -2 \mathbbold 1 _{\gamma_1 \cap \gamma_2})d\lambda \\
        &= \int_{\Xdom} (\mathbbold 1 _{\gamma_1 } - \mathbbold 1 _{\gamma_2 } )^2d\lambda \\
        &= \left|\left|\mathbbold 1_{\gamma_1} - \mathbbold 1_{\gamma_2}\right| \right|^2_{2}
    \end{align*} 
    
\begin{align*}
    \delta(\gamma_1,\gamma_2)=0 
    &\Leftrightarrow \left|\left|\mathbbold 1_{\gamma_1} - \mathbbold 1_{\gamma_2}\right| \right|^2_{2}=0\\
    &\Leftrightarrow \mathbbold 1 _{\gamma_1}(x) = \mathbbold 1 _{\gamma_2}(x) \text{ for } \lambda \text{-almost every }x \in \Xdom \\
    &\Leftrightarrow \gamma_1 = \gamma_2 ~\lambda \text{-almost everywhere.}
\end{align*}

\end{proof}

\subsubsection{Proof of Proposition \ref{prop_k_def_pos}}
\label{proof:kdefpos}
\ksetkernel*
\begin{proof}
The proof is similar as the one of Lemma 2.1. in \textcite{set-index-process-herbin} but is recalled and adapted here.

Let $\alpha_1,...,\alpha_n \in \Real$ and $\gamma_1,...,\gamma_n \in \mathscr L ^*(\Xdom)$. Let's show that $$\sum_{kl}^n e^{- \frac{\lambda(\gamma_k\Delta \gamma_l)}{2\sigma^2}}\alpha_k \alpha_l \geq 0.$$
First let's use Lemma \ref{lemme} to write: 
$$\sum_{kl}^n e^{- \frac{\lambda(\gamma_k\Delta \gamma_l)}{2\sigma^2}}\alpha_k \alpha_l = \sum_{kl}^n e^{-\frac{\left|\left|\mathbbold 1_{\gamma_k} - \mathbbold 1_{\gamma_l}\right| \right|^2_{L^2(\Xdom)}}{2\sigma^2}}\alpha_k \alpha_l.$$
Then we use the Bochner-Milos theorem (Theorem 1.1 of \textcite{hida2013white}), which says that there exists a random variable $G \in L^2(\Xdom)$ such that
$$ \forall f \in L^2(\Xdom), ~ \Esp (e^{i\langle f,G \rangle_{L^2(\Xdom)}})=e^{-\frac{1}{2} ||f||^2_{L^2(\Xdom)}}.$$ This allows to derive : 
\begin{align*}
\sum_{kl}^n e^{- \frac{\lambda(\gamma_k\Delta \gamma_l)}{2\sigma^2}}\alpha_k \alpha_l
&= \sum_{kl}^n e^{-\frac{\left|\left|\mathbbold 1_{\gamma_k} - \mathbbold 1_{\gamma_l}\right| \right|^2_{L^2(\Xdom)}}{2\sigma^2}}\alpha_k \alpha_l\\
&= \sum_{kl}^n \Esp e^{i \langle \frac{\mathbbold 1_{\gamma_k} - \mathbbold 1_{\gamma_l}}{\sqrt 2 \sigma }, G \rangle_{L^2(\Xdom)}}\alpha_k \alpha_l \\
&= \Esp \sum_{kl}^n e^{\frac{i}{\sqrt 2 \sigma } \langle \mathbbold 1_{\gamma_k} , G \rangle_{L^2(\Xdom)}}\alpha_k e^{-\frac{i}{\sqrt 2 \sigma } \langle \mathbbold 1_{\gamma_l}, G \rangle_{L^2(\Xdom)}}\alpha_l \\
&= \Esp \left|\sum_{k}^n e^{\frac{i}{\sqrt 2 \sigma } \langle \mathbbold 1_{\gamma_k} ,G \rangle_{L^2(\Xdom)} }\alpha_k \right|^2 \geq 0.
\end{align*}
\end{proof}

\subsubsection{Proof of Proposition \ref{prop_k_bounded_mes}}
\label{proof:kboundedmes}
\kboundedmes*
\begin{proof}
$k_{set}$ is clearly bounded by $1$. In term of measurability, we are studying the measurability of $k_{set}$ with respect to the two Borels $\sigma$-algebras $\Bdom( \mathscr L_2^\star (\Xdom),\delta_2)$ and $\Bdom(\Real,|.|)$ with $\delta_2((\gamma_1^A,\gamma_1^A),(\gamma_2^B,\gamma_2^B))=\delta (\gamma_1^A,\gamma_1^B) + \delta (\gamma_2^A,\gamma_2^B)$. As $\delta : \mathscr L_2^\star (\Xdom) \rightarrow \Real^+$ is a distance on $\mathscr L^\star (\Xdom)$, it is a continuous function from $\mathscr L_2^\star (\Xdom)$ to $\Real$. $k_{set}$ is then continuous as $\exp$ is continuous.  Finally the continuity of $k_{set}$ implies measurability as we are working in two Borel $\sigma$-algebras. 
\end{proof}

\subsubsection{Proof of Proposition \ref{charac}}
\label{proof:kcharac}
\charackernel*
\begin{proof}
The proof is based on the Proposition 5.2. of \textcite{charac_kernel} which is recalled here with our notations.
\begin{prop}
Let $\mathcal{P}$ be a Polish space, $H$ a separable Hilbert space, $T$ a measurable and injective mapping from $\mathcal{P}$ to $H$, and $\varphi$ the Laplace transform of a finite Borel measure $\nu$ on $[0,+\infty)$ such that $\nu \ne 0$ and supp$\nu \ne \{0\}$. Then, the kernel $k$ on $\mathcal{P} \times \mathcal{P}$ defined by
$$
k\left(x, x^{\prime}\right):=\varphi\left(\left\|T(x)-T\left(x^{\prime}\right)\right\|^2_H\right), \quad (x, x^{\prime}) \in \mathcal{X}^2
$$
is integrally strictly positive definite with respect to $\mathcal{M}(\mathcal{X})$.
\end{prop}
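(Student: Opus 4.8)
The plan is to exploit the Laplace-transform representation $\varphi(s)=\int_0^\infty e^{-ts}\,d\nu(t)$ to write, for all $(x,x')$,
$$k(x,x')=\int_0^\infty e^{-t\|T(x)-T(x')\|_H^2}\,d\nu(t),$$
thereby reducing the claim to an integral superposition of Gaussian kernels on $H$. Fix a nonzero finite signed measure $\mu\in\mathcal M(\mathcal X)$ and set $\rho:=T_*\mu$, its pushforward on $H$. Since $k$ is bounded by $\varphi(0)=\nu([0,\infty))<\infty$ and $\mu$ is finite, Tonelli's theorem justifies interchanging the order of integration, giving
$$\iint k\,d\mu\,d\mu=\int_0^\infty Q_t\,d\nu(t),\qquad Q_t:=\iint_{H\times H} e^{-t\|u-v\|_H^2}\,d\rho(u)\,d\rho(v).$$
First I would record that $t\mapsto Q_t$ is continuous on $[0,\infty)$ by dominated convergence, the integrand being bounded by $1$.

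Next, nonnegativity of each $Q_t$ follows exactly as in the proof of Proposition \ref{prop_k_def_pos}: invoking the isonormal Gaussian process $W$ on $H$ (Bochner--Minlos, as in \textcite{hida2013white}), for which $\Esp\, e^{iW(f)}=e^{-\frac12\|f\|_H^2}$ for every $f\in H$, one writes $e^{-t\|u-v\|^2}=\Esp\, e^{i\sqrt{2t}\,(W(u)-W(v))}$ and applies Fubini to obtain $Q_t=\Esp\,|\Phi_t|^2\ge 0$, where $\Phi_t:=\int_H e^{i\sqrt{2t}\,W(u)}\,d\rho(u)$. Hence $\iint k\,d\mu\,d\mu\ge 0$, which in passing reproves positive definiteness in this generality. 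Now suppose this quantity vanishes. Because $\nu\ge 0$ and $Q_t\ge 0$, we must have $Q_t=0$ for $\nu$-almost every $t$; since $\mathrm{supp}\,\nu\neq\{0\}$ there is a point $t_0>0$ in $\mathrm{supp}\,\nu$, and every neighbourhood of $t_0$ meets the full-$\nu$-measure zero set of $Q$, so continuity forces $Q_{t_0}=0$ for this fixed $t_0>0$.

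It then suffices to prove that the single Gaussian kernel $e^{-t_0\|\cdot-\cdot\|_H^2}$ is integrally strictly positive definite, i.e. $Q_{t_0}=0\Rightarrow\rho=0$. From $Q_{t_0}=\Esp|\Phi_{t_0}|^2=0$ we get $\Phi_{t_0}=0$ almost surely, whence, testing against $e^{-i\sqrt{2t_0}\,W(v)}$ and taking expectations,
$$\int_H e^{-t_0\|u-v\|_H^2}\,d\rho(u)=0\qquad\text{for every }v\in H.$$
To descend to finite dimensions, let $P_n$ be the orthogonal projection onto the span of the first $n$ vectors of an orthonormal basis of $H$; for $v\in\mathrm{Ran}\,P_n$ the Pythagorean splitting $\|u-v\|^2=\|P_nu-v\|^2+\|(I-P_n)u\|^2$ lets me rewrite the displayed identity as a Gaussian convolution on $\mathrm{Ran}\,P_n\cong\Real^n$ of the weighted pushforward measure $(P_n)_*\!\big(e^{-t_0\|(I-P_n)u\|^2}d\rho\big)$, which must therefore vanish by the classical injectivity of Gaussian smoothing on $\Real^n$ (the Gaussian has a nowhere-vanishing Fourier transform). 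Letting $n\to\infty$ and using dominated convergence against bounded continuous test functions $g$ (here $g\circ P_n\to g$ and $e^{-t_0\|(I-P_n)u\|^2}\to 1$ pointwise) yields $\int_H g\,d\rho=0$ for all such $g$, so $\rho=0$. Finally, since $T$ is an injective Borel map between Polish spaces, the Lusin--Souslin theorem makes it a Borel isomorphism onto its Borel image, so $\rho=T_*\mu=0$ forces $\mu=0$, contradicting $\mu\neq 0$; hence $k$ is integrally strictly positive definite with respect to $\mathcal M(\mathcal X)$.

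The main obstacle I anticipate is the infinite-dimensional injectivity of Gaussian smoothing in the penultimate step: a centred Gaussian measure on an infinite-dimensional $H$ concentrates on a ``thin'' set, so one cannot invoke a full-support argument directly, and the finite-dimensional reduction via $P_n$ together with the careful passage to the limit is the technical heart of the proof. The joint measurability needed to run Fubini for the isonormal process (selecting a measurable version of $(u,\omega)\mapsto W(u)(\omega)$) and the Borel-isomorphism input are comparatively routine, but I would state them explicitly.
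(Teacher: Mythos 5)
Your proof is correct, but note that the paper does not actually prove this statement: it is quoted verbatim as Proposition 5.2 of \textcite{charac_kernel} and used as a black box inside the proof of Proposition \ref{charac}, whose real content is the verification of its hypotheses (that $\mathscr L^\star(\Xdom)$ is Polish, that $T:[\gamma]_\delta\mapsto[\mathbbold 1_\gamma]_2$ is a measurable injection into $L^2(\Xdom)$, and so on). So your proposal supplies a self-contained argument where the paper delegates to the literature. Pleasingly, the first half of your route coincides with the technique the paper itself uses to prove mere positive definiteness of $k_{set}$ (Proposition \ref{prop_k_def_pos}): the Bochner--Minlos isonormal process turns each Gaussian kernel $e^{-t\|u-v\|_H^2}$ into $\Esp\, e^{i\sqrt{2t}(W(u)-W(v))}$ and the double integral into $\Esp|\Phi_t|^2\ge 0$. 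What you add beyond the paper's toolkit is exactly what strictness requires: (i) the Laplace representation $k=\int_0^\infty e^{-t\|\cdot\|_H^2}\,d\nu(t)$, continuity of $t\mapsto Q_t$, and the hypothesis $\operatorname{supp}\nu\neq\{0\}$ to isolate a single $t_0>0$ with $Q_{t_0}=0$; (ii) testing $\Phi_{t_0}=0$ a.s.\ against $e^{-i\sqrt{2t_0}\,W(v)}$ to upgrade the quadratic-form identity to pointwise vanishing of the Gaussian smoothing of $\rho=T_*\mu$ on all of $H$; (iii) the Pythagorean split along finite-rank projections $P_n$, Fourier uniqueness on $\Real^n$ (nowhere-vanishing Gaussian transform), and a dominated-convergence passage $P_nu\to u$ to conclude $\rho=0$; and (iv) Lusin--Souslin to pull $\rho=0$ back to $\mu=0$, which is precisely where the Polishness and separability hypotheses earn their keep. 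All four steps are sound, and the infinite-dimensional reduction in (iii) is handled correctly --- you rightly identify it as the technical heart, since a direct full-support argument is unavailable in infinite dimensions. Two small points to phrase precisely in a final write-up: the interchange of integrals for a \emph{signed} $\mu$ is Fubini applied after checking absolute integrability with respect to $|\mu|\otimes|\mu|\otimes\nu$ (Tonelli concerns nonnegative integrands), and the Fubini computations involving $W$ require a jointly measurable version, e.g.\ $W(u)(\omega)=\sum_n\langle u,e_n\rangle_H\,\xi_n(\omega)$ with i.i.d.\ standard normals $\xi_n$ --- both of which you already flag as routine, correctly.
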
%
Using the previous Proposition, we will show that $k_{set}$ is integrally strictly positive which is a sufficient condition to be characteristic (see Theorem 7 in \textcite{sriperumbudur_universality_2010}).

To apply the Proposition, we first introduce the notation,
    $$\mathscr{F}_{\text {bin }}^{\star}=\{f \in L^2(\Xdom): \exists \gamma \in \mathscr{L}(\mathcal{X}) \text { such that } f=\mathbbold{1}_\gamma~ \lambda \text {-almost everywhere} \}.$$

We want to apply the proposition with $\mathcal{P}=\mathscr L^*(\Xdom)$, $H=L^2(\Xdom)$, $\varphi (\cdot) = \exp ( - \frac{\cdot}{2\sigma^2} )$ and $T$ defined by 
$$\begin{matrix}
 T : & \mathscr{L}^{\star}(\mathcal{X}) & \longrightarrow & \mathscr{F}_{\text {bin }}^{\star} \subset H\\ 
 & [\gamma]_\delta & \longmapsto & \left[\mathbbold{1}_\gamma\right]_{2},
\end{matrix}$$
where $[\cdot]_\delta$ and $[\cdot]_{2}$ denote equivalence classes in $\mathscr{L}(\mathcal{X})$ and $\mathscr{F}_{\text {bin }}$ respectively. We need to show that $\mathscr L^*(\Xdom)$ is Polish, that $L^2(\Xdom)$ is a separable Hilbert space, and that $T$ is measurable and injective.
\begin{itemize}
    \item Let's first show that is $T$ a well-defined measurable and injective mapping from $\mathcal{P}$ to $H$:
\begin{itemize}
    \item $T$ is well-defined and in the same time injective as for any $\gamma_1,\gamma_2 \in \mathscr{L}(\Xdom)$,
    \begin{align*}
        [\gamma_1]_{\delta}=[\gamma_2]_{\delta} &\Leftrightarrow \delta(\gamma_1,\gamma_2)=0 \\
        & \Leftrightarrow \lambda(\gamma_1 \Delta \gamma_2)=0 \\
        & \Leftrightarrow \left\|\mathbbold 1_{\gamma_1}-\mathbbold 1_{\gamma_2}\right\|_{2}=0 \\
        & \Leftrightarrow \left[\mathbbold{1}_{\gamma_1}\right]_{2}=\left[\mathbbold{1}_{\gamma_2}\right]_{2}.
    \end{align*}
    \item $T$ is measurable (with respect to the Borel $\sigma$-algebras $\mathcal{B}\left(\mathscr{L}^{\star}(\mathcal{X}), \delta\right)$ and $\mathcal{B}\left(\mathscr{F}_{\text {bin }}^{\star} , ||\cdot||_{2}\right)$) because it is continuous as it is an isometry from $\left(\mathscr{L}^{\star}(\mathcal{X}), \sqrt{\delta}\right)$ to $\left(\mathscr{F}_{\text {bin }}^{\star} , ||\cdot||_{2}\right)$.
    \item Let us show that $T$ is also surjective which will be useful later on. Let $[f]_{2} \in  \mathscr{F}_{\text {bin }}^{\star}$. There exist $\gamma \in \mathscr L (\Xdom)$ such that $f=\mathbbold 1 _{\gamma}$ $\lambda$-almost everywhere which implies that $||f-\mathbbold 1_\gamma ||_{2}=0$ i.e. $\left[f\right]_{2}=\left[\mathbbold{1}_{\gamma}\right]_{2}=T([\gamma]_{\delta})$.
\end{itemize}
\item $L^2(\Xdom)$ is a separable Hilbert space as $\Xdom$ is compact so separable.
\item Let us now show that $\mathscr L^*(\Xdom)$ is Polish i.e. a topological space homeomorphic to a separable complete metric space. Luckily, as $T$ is a surjective isometry, it is an homeorphism from $\mathscr{L}^{\star}(\mathcal{X})$ to $\mathscr{F}_{\text {bin }}^{\star}$ which is a metric space. Then, it only remains to prove that $\mathscr{F}_{\text {bin }}^{\star}$ is both complete and separable.
\begin{itemize}
    \item $\mathscr{F}_{\text {bin }}^{\star} \subset L^2(\Xdom)$ which is separable so $\mathscr{F}_{\text {bin }}^{\star}$ is separable.
    \item As $L^2(\Xdom)$ is complete, having  $\mathscr{F}_{\text {bin }}^{\star}$ closed is a sufficient condition for $\mathscr{F}_{\text {bin }}^{\star}$ to be complete. Let us show then that it is closed. 
     Let $\mathbbold 1 _{\gamma_n}  \overset{L_2}{\underset{n\rightarrow +\infty}{\rightarrow}}  f $
with $\gamma_n \in \mathscr L(\Xdom)~\forall n$. The $L_2$ convergence implies that there is a sub-sequence $(\mathbbold 1 _{\gamma_{\phi(n)}})_n$ that converges almost everywhere pointwise to $f$ (as stated in Theorem 3.13 of \textcite{book:functionalanalysis}). It means that there exists a $\lambda$-null set $\mathcal N$ s.t. $\forall x \notin \mathcal N, \mathbbold 1 _{\gamma_{\phi(n)}}(x) \underset{n\rightarrow+\infty}{\rightarrow} f(x)$. As $\mathbbold 1 _{\gamma_{\phi(n)}}(x)$ is a sequence of $0$ and $1$, we have that its limit, $f(x)$, belong to $\{0,1\}$. So, $f= \mathbbold 1_{ f^{-1}(\{1\})}~ \lambda$-almost everywhere and $f^{-1}(\{1\}) \in \mathscr L(\Xdom)$ as $f$ is measurable. So $f \in \mathscr{F}_{\text {bin }}$. $\mathscr{F}_{\text {bin }}^{\star}$ is thus closed.
\end{itemize}
\end{itemize}

Thus $k_{set}$ is integrally strictly positive definite with respect to $\mathcal M ( \mathscr L^*(\Xdom))$, the set of signed measure on $ \mathscr L^*(\Xdom)$, which implies that it is characteristic.
\end{proof}

\subsubsection{Proof of Proposition \ref{prop nested}}
\label{proof:knested}
\nestedestimator*
\begin{proof}
Let $f(\bm u,z)=g(\bm u)h(z)$ with $g(\bm u) = \left(K_{A}\left(\bm u_1, \bm u_2\right)-1\right)$, $h(z)=e^{-\frac{\lambda(\Xdom)}{2\sigma^2} \bm z}$, and let $\Phi (\bm x, \gamma_1,\gamma_2  )=\mathbbold 1 _{\gamma_1 \Delta \gamma_2} (x)$. Let us have iid samples $(\bm U_A^{(i)},\Gamma^{(i)})$,  $i=1,...,n$ of $(\bm U_A,\Gamma)$ and  $(\bm X^{(1)},. ..,\bm X^{(m)})$ of $\bm X \sim \Udom (\Xdom)$. We denote

\[\operatorname{H}= \operatorname{HSIC}(\bm U_A,\Gamma) = \Esp [ f(\bm U,\bm U',\Esp [ \Phi (\bm X, \Gamma,\Gamma') |(\Gamma,\Gamma')]],\]

\[
\operatorname{H}_{n,m}=\widehat{\operatorname{HSIC}}_{u}\left(\bm U_A, \Gamma\right)= \frac{2}{n(n-1)}\sum_{i < j}^n f (\bm U_A^{(i)},\bm U_A^{(j)},\frac{1}{m}\sum_{k=1}^{m} \Phi (\bm X^{(k)},\Gamma^{(i)},\Gamma^{(j)}),
\] and
\[
\operatorname{H}_{n} =\frac{2}{n(n-1)} \sum_{i < j}^n f(\bm U_A^{(i)},\bm U_A^{(j)},\Esp [ \Phi (\bm X,\Gamma^{(i)},\Gamma^{(j)}) | (\Gamma^{(i)},\Gamma^{(j)})]).
\]
First we split the risk into two terms:
\begin{equation*}
\Esp | \operatorname{H}_{n,m}- \operatorname{H} |^2 \leq 2 \Esp | \operatorname{H}_{n} - \operatorname{H} |^2 + 2 \Esp |\operatorname{H}_{n} - \operatorname{H}_{n,m}|^2
\end{equation*}
The first term is the variance of a classic U-statistic of order 2:
\begin{equation*}
 \Esp | \operatorname{H}_{n} - \operatorname{H} |^2 = \frac{2\sigma_1^2}{n(n-1)}+ \frac{4(n-2)\sigma_2^2}{n(n-1)}.
\end{equation*}

The second term can be developed:

\begin{equation*}
\Esp |\operatorname{H}_{n} - \operatorname{H}_{nm}|^2 = \frac{4}{n^2(n-1)^2} \left( \sum_{i< j}^n \sum_{p< l}^n  \Esp \left( E_{ij} - E_{ij,m} \right) \left( E_{pl} - E_{pl,m}\right) \right),
\end{equation*}
with
$$E_{ij}=f(\bm U_A^{(i)},\bm U_A^{(j)},\Esp [ \Phi (\bm X,\Gamma^{(i)},\Gamma^{(j)}) | (\Gamma^{(i)},\Gamma^{(j)})])$$ and $$E_{ij,m}=f (\bm U_A^{(i)},\bm U_A^{(j)},\frac{1}{m}\sum_{k=1}^{m} \Phi (\bm X^{(k)},\Gamma^{(i)},\Gamma^{(j)}).$$

As $\bm X^{(1)},...,\bm X^{(m)}$ are common to each $E_{ij,m}$, the terms $E_{ij,m}$ and $E_{pl,m}$ are not independent even if $i,j,p,l$ are pairwise distinct. We can still bound them but we will lose one order of convergence in $m$. We first have
\begin{align}
\label{node}
\left| \Esp \left( E_{ij} - E_{ij,m} \right) \left( E_{pl} - E_{pl,m}\right) \right | &\leq  (\Esp \left| E_{ij} - E_{ij,m} \right|^2 \Esp \left| E_{pl} - E_{pl,m}\right|^2)^{\frac{1}{2}} \\
&= \left(\Esp \left| E_{12} - E_{12,m} \right|^2\right)  \label{ineq2}\\
&\leq \frac{L^2}{m} \Esp \left[ \left(K_{A}\left(\bm U_A, {\bm U_A}'\right)-1\right)^2\operatorname{Var}\left(  \Phi(\bm X,\Gamma,\Gamma') | (\bm U_A,{\bm U_A}',\Gamma,\Gamma')\right)\right], \nonumber
\end{align}
using Cauchy-Schwarz inequality in (\ref{node}) and that $h$ is $L$-lipschitz.
Summing each term, we obtain
\begin{align*}
\Esp |\operatorname{H}_{n} - \operatorname{H}_{nm}|^2 \leq \frac{L^2}{m} \Esp \left[ \left(K_{A}\left(\bm U_A, {\bm U_A}'\right)-1\right)^2\operatorname{Var}\left(  \Phi(\bm X,\Gamma,\Gamma') | (\bm U_A,{\bm U_A}',\Gamma,\Gamma')\right)\right].
\end{align*}
Putting all results together, we get
\begin{align*}
\Esp | \operatorname{H}_{n,m}- \operatorname{H} |^2 \leq 2 \left( \frac{2\sigma_1^2}{n(n-1)}+ \frac{4(n-2)\sigma_2^2}{n(n-1)}+  \frac{L^2 \sigma_3^2}{m} \right).
\end{align*}
\end{proof}

\subsubsection{Convergence rate in the case of Independent $m$ sample}
\label{annex:indep}
If an $m$ sample $\bm X_{ij}^{(k)}$ is drawn independently for each $(i,j)$, we can obtain an asymptotic rate of $\mathcal O (\frac{1}{n}+\frac{1}{m^2})$. Indeed in (\ref{node}) we use the independence between $(\bm X_{ij}^{(k)})_k$ and $(\bm X_{pl}^{(k)})_k$ for $i,j,p,l$ pairwise distinct:

\begin{align*}
    \left| \Esp \left( E_{ij} - E_{ij,m} \right) \left( E_{pl} - E_{pl,m}\right) \right | &= \left|\Esp \left( E_{ij} - E_{ij,m} \right) \Esp \left( E_{pl} - E_{pl,m}\right)\right|\\
    &=\left|\Esp \left( E_{12} - E_{12,m} \right)\right|^2
\end{align*}
Then by applying Taylor Lagrange's formula to $h$ with $a=\Esp(\Phi (\bm X,\Gamma^{(1)},\Gamma^{(2)})|(\Gamma^{(1)},\Gamma^{(2)}))$ and $b=\frac{1}{m}\sum_{k=1}^{m} \Phi (\bm X^{(k)},\Gamma^{(1)},\Gamma^{(2)})$, we have the existence of $\theta \in \Real^+$ such that,
 \begin{align*}
 E_{12,m}-E_{12}&=\left(K_{A}\left(\bm U_A, {\bm U_A}'\right)-1\right)\left( h(b) - h(a)\right) \\
 &=\left(K_{A}\left(\bm U_A, {\bm U_A}'\right)-1\right) \left[ h'(a)(b-a) +\frac{h''(\theta)}{2}(b-a)^2\right].
 \end{align*}
Then we take the expectation and use the tower property and we use that Monte Carlo estimators are unbiased (i.e. $\Esp(b|(\Gamma^{(1)},\Gamma^{(2)})) = a$) which leads to
\begin{align*}
    \Esp( E_{12,m}-E_{12}) &= \Esp \left[ \Esp( E_{12,m}-E_{12} |\bm U_A,\bm U_A',\Gamma,\Gamma') \right]\\
    &=\Esp \left[ \left(K_{A}\left(\bm U_A, {\bm U_A}'\right)-1\right) \Esp \left( \frac{h''(\theta)}{2}(b-a)^2 | \bm U_A,{\bm U_A}',\Gamma,\Gamma'\right) \right].
\end{align*}
Then, as $h''$ is bounded by $L$, 
\begin{equation*}
    \left| \Esp( E_{12,m}-E_{12}) \right| \leq \frac{L^2}{2m}\Esp \left[ \left|K_{A}\left(\bm U_A, {\bm U_A}'\right)-1\right| \operatorname{Var}\left(  \Phi(\bm X,\Gamma,\Gamma') | (\bm U_A,{\bm U_A}',\Gamma,\Gamma')\right) \right],
\end{equation*}
which finally leads to
\begin{equation*}
     \left| \Esp \left( E_{ij} - E_{ij,m} \right) \left( E_{pl} - E_{pl,m}\right) \right | \leq \frac{L^4}{4m^2}\sigma_4^4,
\end{equation*}
where 
$$
\sigma_4^2=\Esp \left[ \left|K_{A}\left(\bm U_A, {\bm U_A}'\right)-1\right| \operatorname{Var}\left(  \Phi(\bm X,\Gamma,\Gamma') | (\bm U_A,{\bm U_A}',\Gamma,\Gamma')\right) \right].$$

If $i=p$ or $i=l$ or $j=p$ or $j=l$, we lost the independence so we use the previous result that 
\begin{equation*}
     \left| \Esp \left( E_{ij} - E_{ij,m} \right) \left( E_{pl} - E_{pl,m}\right) \right | \leq \frac{L^2}{m}\sigma_3^2.
\end{equation*}

There are $\frac{n(n-1)(n-2)(n-3)}{4}$ $i,j,p,l$ pairwise distinct with $i < j$ and $p < l$ so we finally obtain that,

\begin{align*}
\Esp | \operatorname{H}_{n,m}- \operatorname{H} |^2 &\leq 2 \left( \frac{2\sigma_1^2}{n(n-1)}+ \frac{4(n-2)\sigma_2^2}{n(n-1)}+  \frac{L^2 2(2n-3) \sigma_3^2}{n(n-1)m} +\frac{L^4(n-2)(n-3) \sigma_4^4}{4n(n-1)m^2}\right)\\
&= \mathcal O (\frac{1}{n} + \frac{1}{m^2}).
\end{align*}

\subsection{Figures}
\label{annex:fig}
\begin{figure}[h]
  \begin{minipage}{0.52\textwidth} 
    \centering
   \begin{tabular}{ccccccc}
\toprule
& $U_1$ & $U_2$ & $U_p$ & $U_{r_1}$ & $U_{r_2}$ & $U_{r_3}$ \\
\midrule
$K_{Sob}$ & $5$ & $95$ & $55$ & $0$ & $75$ & $90$ \\
$K_{Sob}$ & $5$ & $100$ & $60$ & $0$ & $75$ & $95$ \\
$K_{Sob}$ & $15$ & $95$ & $55$ & $0$ & $65$ & $95$ \\
$K_{Sob}$ & $10$ & $95$ & $55$ & $0$ & $65$ & $95$ \\
$K_{Sob}$ & $5$ & $100$ & $55$ & $0$ & $70$ & $95$ \\
\bottomrule
\end{tabular}
    \subcaption{Acceptance rate ($\%$) over $20$ independence tests with a risk of $5\%$}
    \label{tab:acceptance-rate-g1andg2}
  \end{minipage}%
  \begin{minipage}{0.48\textwidth} 
    \centering
    \scalebox{0.9}{\input{Images/oscillator/g1_and_g2/hsic_anova_3_no_pvalue_only_1sr_order}}
    \subcaption{Estimations of $\hat{\hat{S}}^{\operatorname{H}_{set}}_{i}$}
    \label{fig:indices:g1andg2}
  \end{minipage}
  \caption{Acceptance rate (\subref{tab:acceptance-rate-g1andg2}) and estimation of $\hat{\hat{S}}^{\operatorname{H}_{set}}_{i}$  (\subref{fig:indices:g1andg2}) for excursion set
$\Gamma_{(g_1,g_2)} $  computed for 5 kernels with $n=100$, $m=100$ and repeated 20 times}
\label{hsic_anova_g1andg2}
\end{figure}

\begin{figure}[h]
  \centering

  \begin{subfigure}{0.45\textwidth}
    \scalebox{0.9}{\input{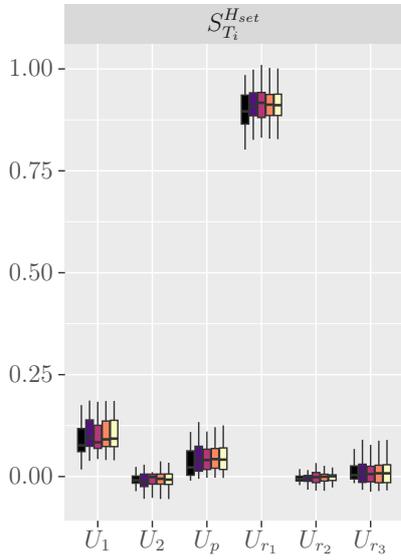}}
    \caption{$\Gamma_{g_1}$}
    \label{fig:sub1}
  \end{subfigure}
  \hfill
  \begin{subfigure}{0.45\textwidth}
    \scalebox{0.9}{\input{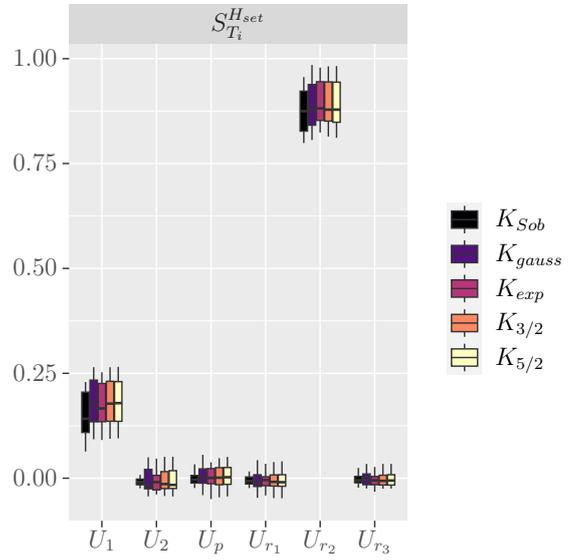}}
    \caption{$\Gamma_{g_2}$}
    \label{fig:sub2}
  \end{subfigure}

  \vspace{1em}  

  \begin{subfigure}{0.45\textwidth}
    \scalebox{0.9}{\input{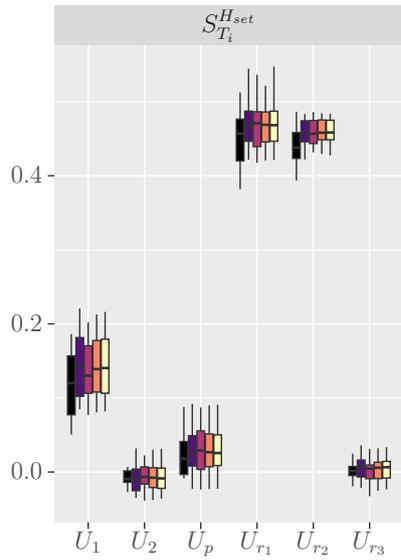}}
    \caption{$(\Gamma_{g_1},\Gamma_{g_2})$}
    \label{fig:sub3}
  \end{subfigure}
  \hfill
  \begin{subfigure}{0.45\textwidth}
    \scalebox{0.9}{\input{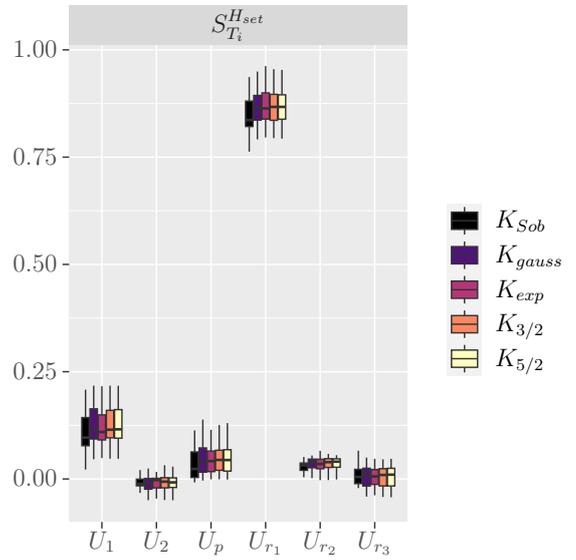}}
    \caption{$\Gamma_{(g_1,g_2)}$}
    \label{fig:sub4}
  \end{subfigure}

  \caption{Estimation of the total-order indices $\hat{\hat{S}}^{\operatorname{H}_{set}}_{T_i}$  for the oscillator case computed for 5 kernels with $n=100$, $m=100$ and repeated 20 times}
  \label{fig:totalordercousin}
\end{figure}

\begin{figure}
\centering
 \resizebox{\textwidth}{!}{\input{Images/Adan/hsic_anova_adan_100_ustat_420_7_2D}}
\caption{Estimation of $\hat{\hat{S}}^{\operatorname{H}_{set}}_{i}$ for the excursion sets $\Gamma_{f_1}$, $\Gamma_{f_2}$ and the pair $(\Gamma_{f_1},\Gamma_{f_2})$ with $-q_1=420$ N.m and $q_2=7\%$ computed with the Sobolev input kernel and with $n=100$, $m=100$. $20$ replicates.}
\label{fig:hsic_anova_adan_2d}
\end{figure}

\end{document}